\documentclass[11pt,a4paper,reqno]{amsart}
\title{There are only countably many locally tabular bi-intermediate logics of co-trees}

\author{Miguel Martins}
\address{\textbf{Miguel Martins:} Departament de Filosofia\\
Facultat de Filosofia\\
Universitat de Barcelona (UB)\\
Carrer Montalegre, 6, 08001 Barcelona, Spain}
\email{miguelplmartins561@gmail.com}

\date{}

\usepackage{mathpazo}  
\usepackage{graphicx}            

\usepackage[utf8]{inputenc}     
\usepackage{amssymb, latexsym, stmaryrd, dsfont, amsmath, amsthm, amsfonts, mathrsfs, amsbsy, mathrsfs, mathtools}            
\usepackage[bottom,symbol]{footmisc}
\usepackage{appendix, verbatim}
\usepackage{etoolbox,graphicx,color}
\usepackage{amscd}
\usepackage{tabularx}
\usepackage{epsfig}
\usepackage{url}
\usepackage{color}
    \usepackage[all, knot]{xy}
        \xyoption{arc}
        \xyoption{web}
\usepackage{tikz}
\usepackage{multicol}
\usepackage[justification=centering]{caption}
\usetikzlibrary{calc}

\allowdisplaybreaks[1]							

\usepackage[margin=1in]{geometry}

\usepackage{sidecap}                   
\usepackage{bm}                          
\usepackage{enumerate}                   
\usepackage{microtype}  

\makeatletter                            
\def\MT@register@subst@font{\MT@exp@one@n\MT@in@clist\font@name\MT@font@list
 \ifMT@inlist@\else\xdef\MT@font@list{\MT@font@list\font@name,}\fi}
\makeatother
\usepackage[pdftex,bookmarks,bookmarksnumbered,linktocpage,  
         colorlinks,linkcolor=blue,citecolor=blue]{hyperref}
\usepackage{memhfixc}

\usepackage{color}

\definecolor{Salmon}{RGB}{250,128,114}
\definecolor{Crimson}{RGB}{220,20,60}
\definecolor{DarkOrange}{RGB}{255,140,0}
\definecolor{Khaki}{RGB}{240,230,140}
\definecolor{GreenYellow}{RGB}{173,255,47}
\definecolor{MediumSeaGreen}{RGB}{60,179,113}
\definecolor{OliveDrab}{RGB}{107,142,35}
\definecolor{LightSeaGreen}{RGB}{32,178,170}
\definecolor{Aquamarine}{RGB}{127,255,212}
\definecolor{SteelBlue}{RGB}{70,130,180}
\definecolor{Navy}{RGB}{0,0,128}
\definecolor{Purple}{RGB}{128,0,128}
\definecolor{Orchid}{RGB}{218,112,214}
\definecolor{Brown}{RGB}{165,42,42}
\definecolor{Chocolate}{RGB}{210,105,30}
\definecolor{SandyBrown}{RGB}{244,164,96}

\usepackage{tikz}                            
\usetikzlibrary{positioning}
\usetikzlibrary{automata}
 
\usepackage{hyperref}
\hypersetup{colorlinks,citecolor= blue,urlcolor=blue}

\newtheorem{Theorem}{Theorem}[section]
\newtheorem{Lemma}[Theorem]{Lemma}
\newtheorem{Proposition}[Theorem]{Proposition}

\theoremstyle{definition}
\newtheorem{defi}[Theorem]{Definition}
\newtheorem{exa}[Theorem]{\textbf{Example}}

\theoremstyle{remark}
\newtheorem{Remark}[Theorem]{Remark}

\newtheorem{exer*}[Theorem]{Exercise*}

\makeatletter
\newcommand\niton{\mathrel{\m@th\mathpalette\canc@l\owns}}
\newcommand\canc@l[2]{{\ooalign{$\hfil#1/\mkern1mu\hfil$\crcr$#1#2$}}}
\newcommand{\bicat}[0]{\operatorname{\mathsf{bi-HA}}}

\newcommand{\bacat}[0]{\mathsf{BA}}
\newcommand{\J}[0]{\mathcal{J}}

\newcommand{\bipc}[0]{\operatorname{\mathsf{bi-IPC}}}
\newcommand{\bilc}[0]{\operatorname{\mathsf{bi-LC}}}
\newcommand{\ipc}[0]{\mathsf{IPC}}
\newcommand{\cpc}[0]{\mathsf{CPC}}
\newcommand{\bg}[0]{\operatorname{\mathsf{bi-GA}}}

\newcommand{\lc}[0]{\operatorname{\mathsf{bi-GD}}}
\newcommand{\bigd}[0]{\operatorname{\mathsf{bi-GD}}}

\newcommand{\X}[0]{\mathcal{X}}

\newcommand{\T}[0]{\mathcal{T}}
\newcommand{\Y}[0]{\mathcal{Y}}
\newcommand{\V}[0]{\mathsf{V}}

\newcommand{\bivar}[0]{\operatorname{\mathsf{bi-HA}}}

\newcommand{\A}[0]{\mathbf{A}}
\newcommand{\B}[0]{\mathbf{B}}

\newcommand{\La}[0]{\mathfrak{L}}
\newcommand{\F}[0]{\mathfrak{F}}
\newcommand{\M}[0]{\mathfrak{M}}

\newcommand{\C}[0]{\mathfrak{C}}
\newcommand{\down}[0]{{\downarrow}}
\newcommand{\up}[0]{{\uparrow}}
\newcommand{\SSS}{\mathbb{S}}
\newcommand{\HHH}{\mathbb{H}}
\newcommand{\PPP}{\mathbb{P}}
\newcommand{\ZZ}{\mathbb{Z}}
\newcommand{\III}{\mathbb{I}}
\newcommand{\lang}[1][]{\mathcal L_{#1}}

\newcommand{\balg}[0]{\text{bi-G\"odel algebra}}
\newcommand{\seq}{(p_i)_{i \in \omega}}
\newcommand{\form}{\mathbf{Fm}}
\makeatother

\let\leq=\leqslant
\let\nleq=\nleqslant
\let\geq=\geqslant

\newcommand{\bit}{\begin{itemize}}    
\newcommand{\eit}{\end{itemize}}
\newcommand{\ben}{\begin{enumerate}}
\newcommand{\een}{\end{enumerate}}
\newcommand{\benormal}{\ben[\normalfont 1.]}   
\let\enormal\een
\newcommand{\benroman}{\ben[\normalfont (i)]}  
\let\eroman\een
\newcommand{\benbullet}{\ben[\textbullet]}     
\let\ebullet\een
\newcommand{\vs}{\vspace{.3cm}}

\DeclareRobustCommand{\upcc}{%
  \mathord{\vphantom{\uparrow}\text{%
    \ooalign{%
      $\uparrow$\kern-.515em\raisebox{-1.25ex}{\scalebox{0.88}{$\circ$}}\cr
    }%
  }}%
}
\newcommand{\upc}{\ensuremath \raisebox{1.5pt}{$\upcc$}}

\DeclareRobustCommand{\downcc}{%
  \mathord{\vphantom{\downarrow}\text{%
    \ooalign{%
      $\downarrow$\kern-.515em\raisebox{1.35ex}{\scalebox{0.88}{$\circ$}}\cr
    }%
  }}%
}
\newcommand{\downc}{\ensuremath \raisebox{-1.5pt}{$\downcc$}}

\newcommand{\precc}{{\prec}}

\newcommand{\qq}{^}
\newcommand{\ccpc}{\vdash_{\cpc}}
\newcommand{\cbipc}{{\vdash_{\bipc}}}

\newcommand{\cipc}{{\vdash_{\ipc}}}
\newcommand{\cbigd}{{\vdash_{\lc}}}
\newcommand{\vfc}{{\V_{FC}}}
\newcommand{\cfc}{{\vdash_{FC}}}

\begin{document}

\begin{abstract}
A bi-Heyting algebra validates the Gödel-Dummett axiom $(p \to q) \lor (q \to p)$ iff the poset of its prime filters is a disjoint union of co-trees. 
Bi-Heyting algebras of this kind are called bi-Gödel algebras and form a variety $\bg$ that algebraizes the extension $\bigd$ of bi-intuitionistic logic axiomatized by the Gödel-Dummett axiom. 
In this paper we show that there are only countably many locally tabular bi-intermediate logics of co-trees, all of which are finitely axiomatizable.

The theory of canonical formulas of bi-Gödel algebras has shown that $\bg$ has continuum many subvarieties, among which the locally finite ones coincide with the subvarieties of the $\V_n\coloneqq \{\A \in \bg \colon \A \models \beta(\C_n)\}$ (where $\beta(\C_n)$ is the subframe formula  of the $n$-comb).
We identify the multiset projectivity relation (a binary relation that, when defined on the set of finite multisets of a better partial order, is necessarily a better partial order) and use it to prove that every $\V_n$ is a Specht variety, hence has only countably many subvarieties, all of which are finitely axiomatizable.
By the algebraizability of $\bigd$, the main result follows.
We also provide an informative depiction of the lattice of varieties of bi-Gödel algebras.
\end{abstract}
\maketitle

\section{Introduction}

The \textit{bi-intuitionistic propositional calculus} $\cbipc$ is the conservative extension of the \textit{intuitionistic propositional calculus} $\cipc$ obtained by introducing $\gets$ to the language, a binary connective which behaves dually to $\to$ and is known as \textit{co-implication} (also called \textit{exclusion} or \textit{subtraction}).
To get an intuition for the behavior of the co-implication, we can utilize the Kripke semantics of $\cbipc$ \cite{Rauszer6}: if $x$ is a point in a Kripke model $\M$, then for formulas $\phi$ and $\psi$ we have
\[
\M, x \models \phi \gets \psi \text{ iff } \exists y \leq x \; (\M,y \models \phi \text{ and } \M,y\not \models \psi).
\]  

The co-implication $\gets$ gives $\cbipc$ significantly greater expressive power than that of $\cipc$.
This is witnessed, for instance, in \cite{Wolter}, where Gödel's embedding of $\cipc$ into the modal logic $\mathsf{S4}$ is extended to an embedding of $\cbipc$ into tense-$\mathsf{S4}$, and a version of the Blok-Esakia Theorem \cite{Blok76,Esakia76} is proved, showing that the lattice $\Lambda(\bipc)$ of \textit{bi-intermediate logics} (consistent axiomatic extensions\footnote{From now on we will use \textit{extension} as a synonym of \textit{axiomatic extension}.} of $\bipc$) is isomorphic to that of consistent normal tense logics containing $\mathsf{Grz.t}$ (see also \cite{Cleani21,Stronkowski}).

Furthermore, the addition of this new connective endows $\cbipc$ with a symmetry which is notably absent in $\cipc$, since now each connective $\land, \to, \bot$ has its dual $\lor, \gets, \top$, respectively.
This greater symmetry is reflected in the fact that $\cbipc$ is algebraized in the sense of \cite{BP89} by the variety $\bicat$ of \textit{bi-Heyting algebras} \cite{Rauszer3} (Heyting algebras whose order duals are also Heyting algebras). Consequently, the lattice $\Lambda(\bipc)$ is dually isomorphic to that of nontrivial varieties of bi-Heyting algebras. The latter, in turn, is amenable to the methods of universal algebra and duality theory since the category of bi-Heyting algebras is dually equivalent to that of \textit{bi-Esakia spaces} \cite{Esakia2} (see also \cite{Bezhan1}).

Motivated by their connection with bi-intuitionistic logic, the theory of bi-Heyting algebras was developed by Rauszer and other in a series of papers (see, e.g., \cite{Beazer,Kohler1,Rauszer2,Rauszer3,Rauszer6,Sanka1}).
However, bi-Heyting algebras also arise naturally in other fields of research.
For example, the lattice of open sets of an Alexandrov space is always a bi-Heyting algebra, and so is the lattice of subgraphs of an arbitrary graph (see, e.g., \cite{Taylor}). 
Similarly, every quantum system can be associated with a complete bi-Heyting algebra \cite{Doring}. 
Many other examples can be found, especially in the field of topos theory \cite{Lawvere1,Lawvere2,Reyes}. 

The thorough investigation of the lattice of \textit{intermediate logics} (consistent extensions of $\cipc$) was a very fruitful topic in nonclassical logic (see, e.g., \cite{Zakha}), but currently $\Lambda(\bipc)$ lacks such an in-depth analysis (for some recent developments in the study of $\cbipc$, see, e.g.,  \cite{Badia,BJib,Gore1,Gore2,Shramko}).
In \cite{Paper1}, the authors contributed to the investigation of $\Lambda(\bipc)$ by focusing on the sublattice $\Lambda(\lc)$ of consistent extensions of the \textit{bi-intuitionistic Gödel-Dummett logic}
\[
\cbigd \coloneqq \cbipc + (p\to q) \lor (q \to p).
\]

The formula $(p\to q) \lor (q \to p)$ is called the 
\textit{prelinearity axiom} (or the \textit{Gödel-Dummett axiom}) and over $\cipc$ it axiomatizes the \textit{intuitionistic linear calculus} $\vdash_{\mathsf{LC}}$ (or the intuitionistic \textit{Gödel-Dummett logic}).
$\vdash_{\mathsf{LC}}$ has been widely studied (see, e.g., \cite{Dummett,Goedel,Horn1,Horn}), and is well-known to be both the intermediate logic of chains (in the sense that it is Kripke complete with respect to the class of \textit{chains}, i.e., linearly ordered Kripke frames) and the intermediate logic of \textit{co-trees} (Kripke frames with a greatest element and whose principal upsets are linearly ordered).
In contrast, while $\cbigd$ is the bi-intermediate logic of co-trees, it is proved in \cite[Thm. 3.10]{Paper1} that the bi-intermediate logic of chains is its proper extension 
\[
\vdash_{\operatorname{\mathsf{bi-LC}}} \coloneqq \cbipc + (p \to q) \lor (q \to p) + \neg[ (q\gets p) \land (p \gets q)],
\]
there called the \textit{bi-intuitionistic linear calculus} (see also \cite[Thm. 4.25]{Paper1} for a different axiomatization of $\vdash_{\operatorname{\mathsf{bi-LC}}}$).
This suggests that the language of $\cbipc$ is more appropriate to study tree-like structures than that of $\ipc$, since $\cbipc$ is capable of distinguishing the class of chains from that of co-trees, while $\cipc$ cannot. 
Yet another example of this is that $\cipc$ is well known to be both the intermediate logic of Kripke frames and that of \textit{trees} (order duals of co-trees), and while $\cbipc$ is the bi-intermediate logic of Kripke frames \cite{Rauszer6}, it is also shown in \cite[Thm. 3.10]{Paper1} that the bi-intermediate logic of trees is
\[
\vdash_{\operatorname{\mathsf{bi-GD}}\qq \partial} \coloneqq \cbipc + \neg[ (q\gets p) \land (p \gets q)].
\]

Notably, because of the symmetric nature of bi-intuitionistic logic, all of the results in \cite{Paper1,Martins01,MARTINS2025103563} and in this current paper about extensions of the bi-intermediate logic of co-trees $\cbigd$ can be rephrased in a straightforward manner as results on extensions of the bi-intermediate logic of trees $\vdash_{\operatorname{\mathsf{bi-GD}}\qq \partial}$, by replacing every occurring formula $\varphi$ by its dual $\lnot \varphi^\partial$ ($\varphi^\partial$ is the formula obtained from $\varphi$ by replacing each occurrence of $\land, \lor, \alpha\to \beta, \alpha\gets \beta, \bot, \top$ in $\varphi$ by $\lor, \land, \beta \gets \alpha, \beta \to \alpha, \top, \bot$ respectively) and every algebra or Kripke frame by its order dual. 

There are also other properties of $\cbigd$ that diverge significantly from those of its intuitionistic fragment $\vdash_{\mathsf{LC}}$.
For example, it is known that the lattice $\Lambda(\mathsf{LC})$ of consistent extensions of $\vdash_{\mathsf{LC}}$ is a chain of order type $(\omega+1)^\partial$ (see, e.g., \cite{Zakha}), whereas it is shown in \cite[Thm. 4.16]{Paper1} that the lattice $\Lambda(\lc)$ is not a chain and has the cardinality of the continuum. 
It is also well known that $\vdash_{\mathsf{LC}}$ is \textit{locally tabular} \cite{Horn} (a logic is said to be \textit{locally tabular} when there are only finitely many formulas (up to logical equivalence) in each finite number of variables), but it is an immediate consequence of \cite[Cor. 5.31]{Paper1} that $\cbigd$ is not.

Local tabularity in the setting of $\Lambda(\bigd)$ has been studied in \cite{Paper1,Martins01,MARTINS2025103563}, where it shown to be intrinsically connected to the \textit{finite combs}, a type of finite co-trees depicted in Figure \ref{Fig:finite-combs}.
The variety of bi-Gödel algebras generated by the duals of the finite combs and its bi-intermediate logic (called the \textit{logic of the finite combs}) are denoted by $\vfc \coloneqq \HHH \SSS \PPP\{\C_n^* \colon n \in \ZZ^+\}$ and $\cfc$, respectively.
\begin{figure}[h]
\begin{tikzpicture}
    \tikzstyle{point} = [shape=circle, thick, draw=black, fill=black , scale=0.35]
    \node [label=right:{$c_1'$}] (1') at (1,0) [point] {};
    \node [label=left:{$c_1$}] (1) at (0.5,0.5) [point] {};
    \node [label=right:{$c_2'$}] (2') at (1.5,.5) [point] {};
    \node [label=left:{$c_2$}] (2) at (1,1) [point] {};
    \node [label=above:{$c_n$}] (n) at (1.75,1.75) [point] {};
    \node [label=right:{$c_n'$}] (n') at (2.25,1.25) [point] {};
    
    \draw (1)--(2);
    \draw (1')--(1);
    \draw (2')--(2);
    \draw (n')--(n);
    \draw [dotted] (2)--(n);
\end{tikzpicture}
\caption{The $n$-comb $\C_n$, where $n \in \ZZ^+$.}
\label{Fig:finite-combs}
\end{figure} 

The theories of Jankov and subframe formulas of bi-Gödel algebras (which were developed in \cite{Paper1, Martins01}, but for an overview of these formulas and their use in superintuitionistic and modal logics we refer to \cite{Bezhan2} and \cite{Zakha}, respectively) provide sufficient machinery to make the aforementioned connection between local tabularity and the finite combs apparent.
This is because the validation of these types of formulas in a bi-Gödel algebra $\A$ yields restrictions on the poset structure of the bi-Esakia dual of $\A$.

For example, if we denote the subframe formula of (the algebraic dual of) the $n$-comb by $\beta(\C_n)$, then $\A \models \beta(\C_n)$ iff $\C_n$ does not order embed into the bi-Esakia dual of $\A$ iff $\C_n \not \hookrightarrow \A_*$.
This is presented more generally in Lemma \ref{subframe lemma}, but see also \cite[Lem.~4.24]{Paper1}.
Similarly, we denote the Jankov formula of (the algebraic dual of) the $n$-comb by $\J(\C_n)$ and refer to \cite[Lem.~4.9]{Paper1} for a Jankov Lemma.

The defining properties of these formulas were essential to the derivation of the following criterion for local tabularity in $\Lambda(\bigd)$:

\begin{Theorem}{\cite[Cor.~5.31]{Paper1}}\label{big crit}
    Let ${\vdash} \in \Lambda(\bigd)$ and $\V_\vdash$ be its variety of bi-Gödel algebras.
    The following conditions are equivalent:
\begin{multicols}{2}
    \benroman
        \item $\vdash \textit{ is locally tabular}$;
        \item ${\vdash} \nsubseteq \cfc$;
        \item $\vdash \J(\C_n) \textit{ for some } n \in \ZZ^+$;
        \item $\vdash \beta(\C_n) \textit{ for some } n \in \ZZ^+$;
        \item $\V_\vdash$ is locally finite;
        \item $\vfc \nsubseteq \V_\vdash$;
        \item $\V_\vdash \textit{ omits the algebraic dual of a finite comb}$;
        \item $\exists n \in \ZZ^+, \, \forall \A \in \V_\vdash \, \big(\C_n \not \hookrightarrow \A_*\big)$.
    \eroman
\end{multicols}
\end{Theorem}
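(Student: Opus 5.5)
The plan is to group the eight conditions into a logical side (i)--(iv) and an algebraic side (v)--(viii), link the two sides through the algebraizability of $\bigd$ by $\bg$, and close the algebraic side with a combinatorial bound. Since $\bigd$ is algebraized by $\bg$, the lattice $\Lambda(\bigd)$ is dually isomorphic to the lattice of nontrivial subvarieties of $\bg$. This gives the translations we need. Local tabularity of $\vdash$ is equivalent to local finiteness of $\V_\vdash$, because the Lindenbaum algebra on $k$ variables is the $k$-generated free algebra of $\V_\vdash$; so (i)$\Leftrightarrow$(v). Inclusion reverses, so ${\vdash}\subseteq\cfc$ iff $\vfc\subseteq\V_\vdash$, which gives (ii)$\Leftrightarrow$(vi). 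Finally, $\vdash\varphi$ iff $\V_\vdash\models\varphi$.

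Next come the easy implications around the combs, using the Jankov Lemma and the subframe lemma (Lemma \ref{subframe lemma}). For (iv)$\Leftrightarrow$(viii): $\vdash\beta(\C_n)$ iff every $\A\in\V_\vdash$ validates $\beta(\C_n)$, iff $\C_n\not\hookrightarrow\A_*$ for all such $\A$. For (iii)$\Rightarrow$(vii): $\A\models\J(\C_n)$ iff $\C_n^*$ is not in $\SSS\HHH(\A)$, so $\V_\vdash$ omits $\C_n^*$. For (vii)$\Rightarrow$(vi): $\vfc$ is generated by all the $\C_n^*$, so if it were contained in $\V_\vdash$, then $\V_\vdash$ would omit no $\C_n^*$. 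For (vi)$\Rightarrow$(iii): if $\V_\vdash$ validates no $\J(\C_n)$, then by the Jankov Lemma every $\C_n^*$ lies in $\SSS\HHH(\V_\vdash)\subseteq\V_\vdash$, so $\vfc\subseteq\V_\vdash$. I also need (viii)$\Rightarrow$(iii). If $\C_n\not\hookrightarrow\A_*$ for all $\A\in\V_\vdash$, then $\C_n^*\notin\V_\vdash$: otherwise $\C_n$ itself would be the dual of a member of $\V_\vdash$. Since $\C_n^*$ is finite and subdirectly irreducible, Jankov's lemma then gives $\vdash\J(\C_n)$. Conversely, for (iii)$\Rightarrow$(viii), I will show that $\C_m\hookrightarrow\A_*$ for large $m$ forces $\C_n$ to be a bi-p-morphic image of a bi-subframe-like object. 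Concretely, I expect that an order-embedding of $\C_m$ with $m$ large enough produces $\C_n^*\in\SSS\HHH(\A)$, by taking the subalgebra generated by suitable clopens separating the teeth. This gives (i)--(iv) and (vi)--(viii) equivalent, and (v) is tied in via (i).

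The remaining and hardest link is (viii)$\Rightarrow$(v): a variety whose duals omit $\C_n$ is locally finite. The plan is to bound the size of finitely generated algebras. Let $\A\in\V_\vdash$ be generated by $k$ elements. Its dual is a disjoint union of co-trees, and the generators are clopen upsets. I would show by induction on $n$ that the depth, and the branching at each point of a rooted component, are bounded by a function of $k$ and $n$. Along a maximal chain, each point either is "colour-distinguished" (its label in $\{0,1\}^k$ together with coimplication-definable data changes) or it has a side branch. Omitting $\C_n$ bounds the number of side branches along any chain by $n-1$. Between consecutive branchings, a chain segment on which all points share the same colour collapses under the bi-Esakia duality of generated subalgebras; that is, a finitely generated algebra cannot separate them, so the segment has length at most $2^k$. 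The branches hanging off the chain omit $\C_{n-1}$-like configurations, so induction applies to them. Branching width is bounded by the number of distinct (up to bisimulation) subtrees, which is finite by induction. Making the "collapse" step rigorous with the coimplication is where I expect the main difficulty. I would first try to do it via finite bi-Esakia morphic images and a color-based bisimulation argument, in the style of the classical proof that $\mathsf{LC}$ is locally tabular.
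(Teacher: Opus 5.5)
Your implications do not close into a cycle. Apart from (i)$\Leftrightarrow$(v), the only arrow involving (i) or (v) is (viii)$\Rightarrow$(v). So the remark that ``(v) is tied in via (i)'' is circular: nothing in the proposal shows that a locally finite $\V_\vdash$ satisfies any of the comb conditions. Local finiteness is inherited by subvarieties, so the missing implication (v)$\Rightarrow$(vi) is equivalent to the claim that $\vfc$ itself is \emph{not} locally finite, i.e.\ that $\cfc$ (and hence $\bigd$) is not locally tabular. This is exactly where the co-implication makes $\bigd$ differ from $\mathsf{LC}$, all of whose extensions are locally tabular. It therefore cannot come from the Jankov and subframe machinery alone: you must show that the algebras $\C_n^*$ are generated by a uniformly bounded number of elements. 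For instance, in $\C_n^*$ let $P\coloneqq\up c_1$ and $U\coloneqq P\cup\{c_i'\colon i\text{ odd}\}$. Then
\begin{itemize}
\item $1\gets U=\up c_2\cup\{c_i'\colon i\text{ even}\}$;
\item $P\to\up c_j=\up c_j\cup\{c_i'\colon i\geq j\}$;
\item $\big((P\to\up c_j)\gets U\big)\cap U=\up c_{j+1}$ for odd $j$, and the same holds with $1\gets U$ in place of $U$ for even $j$.
\end{itemize}
By induction, every $\up c_j$ and every $P\to\up c_j$ lies in the subalgebra generated by $P$ and $U$. These upsets separate the points of $\C_n$, so the bi-p-morphism dual to the inclusion of that subalgebra is injective, hence an isomorphism, and $\C_n^*$ is $2$-generated. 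Consequently the $2$-generated free algebra of $\vfc$ maps onto every $\C_n^*$ and is infinite.

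Your step (iii)$\Rightarrow$(viii) is also only an expectation, and the suggested mechanism does not deliver it. The subalgebra generated by clopens separating the teeth of an embedded $\C_m$ is dual to some bi-p-morphic image of $\A_*$, which need not be $\C_n$. But $\C_n^*\in\SSS\HHH(\A)$ requires $\C_n$ itself to be a bi-p-morphic image of the dual of a homomorphic image of $\A$. This is a genuine theorem: over $\bg$ the formulas $\J(\C_n)$ and $\beta(\C_n)$ are equivalent \cite[Cor.~5.22]{Paper1}, as quoted in the proof of Theorem~\ref{thm depiction}. Without it, nothing leads from (ii), (iii), (vi), (vii) back to (iv), (viii). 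The paper itself does not reprove the statement; it imports it from \cite{Paper1}. Adding these two ingredients closes the cycle (iii)$\Rightarrow$(viii)$\Rightarrow$(v)$\Rightarrow$(vi)$\Rightarrow$(iii), provided two further points are handled. First, your bound for (viii)$\Rightarrow$(v) must actually be carried out for arbitrary, possibly infinite, subdirectly irreducible members. Second, Lemma~\ref{subframe lemma} as stated covers only finite co-forests, so (iv)$\Leftrightarrow$(viii) needs its version for arbitrary bi-Esakia duals.
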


\noindent The last condition of the theorem can be rephrased as ``there exists a natural bound for the size of the finite combs that can be order embedded into the bi-Esakia models of $\V_\vdash$''.
We also note that it is immediate form the criterion that $\cfc$ is the only \textit{pre-locally tabular} logic in $\Lambda(\bigd)$, that is, $\cfc$ is not locally tabular but all of its proper  extensions are so.

Subsequently, a finite axiomatization for the logic of the finite combs $\cfc$ was found in \cite{MARTINS2025103563}.
And since $\cfc$ has the finite model property by definition, it follows from that this logic is decidable.

Together with the above criterion, in particular, with the fact that an extension of $\cbigd$ is locally tabular iff it is not a sublogic of $\cfc$, the decidability of $\cfc$ ensures that local tabularity is decidable in $\Lambda(\bigd)$.
This is formalized bellow.

\begin{Theorem} {\cite[Thm.~3.3]{MARTINS2025103563}}
    The logic $\cfc$ of the finite combs is decidable and coincides with
    \[
    \cbigd + \beta (\F_0) + \J (\F_1) +\J (\F_2) + \J (\F_3),
    \]
    where $\F_0, \F_1, \F_2,\F_3$ are the finite co-trees depicted in Figure \ref{Fig:the co-trees}.
    
    Consequently, the problem of determining if a finitely axiomatizable  extension of $\cbigd$ is locally tabular is decidable.
\end{Theorem}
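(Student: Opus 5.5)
The plan splits the statement into its two halves: the identification of $\cfc$ with the finitely axiomatized logic, and the decidability consequences.

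\emph{The axiomatization.} Write $\vdash_0 \coloneqq \cbigd + \beta(\F_0) + \J(\F_1) + \J(\F_2) + \J(\F_3)$.

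For ${\vdash_0} \subseteq \cfc$, I check that each finite comb $\C_n$ validates the four axioms.
\begin{itemize}
\item By the subframe lemma (Lemma \ref{subframe lemma}), $\C_n \models \beta(\F_0)$ iff $\F_0 \not\hookrightarrow \C_n$. This should be a direct inspection of the shape of $\F_0$ (presumably a co-tree with a branching node that has two non-minimal children, or similar).
\item By the Jankov lemma \cite[Lem.~4.9]{Paper1}, $\C_n \models \J(\F_i)$ iff $\F_i^*$ is not in $\SSS\HHH(\C_n^*)$. Dually, $\F_i$ is not a bi-p-morphic image of an up-and-down-closed-like generated subframe (in the bi-Esakia sense, a subframe closed under the relevant operations) of $\C_n$.
\item Since the $\C_n$ are finite and their relevant images are again combs or small co-trees, this is a finite combinatorial check.
\end{itemize}

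For the converse $\cfc \subseteq {\vdash_0}$, I show that every bi-Gödel algebra validating the axioms lies in $\vfc$. It suffices to consider finitely generated, indeed finite, algebras, provided $\vdash_0$ is locally tabular or has the finite model property. I would argue as follows.
\begin{enumerate}
\item From $\beta(\F_0)$, the dual co-tree forests omit $\F_0$ as a subposet. This should force each co-tree to be comb-like up to finitely many local patterns, for instance every point having at most one non-leaf child.
\item The Jankov formulas $\J(\F_1),\J(\F_2),\J(\F_3)$ exclude the remaining non-comb finite configurations.
\item Consequently every finite rooted dual frame of $\vdash_0$ is a bi-p-morphic image of a generated substructure of some $\C_m$, so its algebra lies in $\HHH\SSS\PPP\{\C_n^*\}$.
\end{enumerate}

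To pass from finite to arbitrary models, I would use a selective-filtration or finite-model-property argument for $\vdash_0$, adapting subframe techniques to bi-Esakia spaces of co-trees.

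\emph{Main obstacle.} The hard step is the finite model property of $\vdash_0$. Without it, a refuting model could be infinite, for instance containing arbitrarily long combs, and escape the finite combinatorial analysis. I would first try to show that $\vdash_0$ is axiomatized by formulas whose validity is preserved under taking suitable finite subframes generated by the finitely many points witnessing a failure of a formula (the co-implication witnesses). That is, I would pick the finitely many points witnessing the failure and close under the needed joins/roots in the co-tree. Then I would check that the chosen subframe still omits $\F_0$ and still satisfies the Jankov conditions, which needs an argument specific to the shapes of $\F_1,\F_2,\F_3$.

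\emph{Decidability.} Given the axiomatization, $\cfc$ is finitely axiomatizable, so its theorems are recursively enumerable. By definition it has the finite model property with respect to the recursively enumerable class $\{\C_n\}$, so its non-theorems are recursively enumerable as well: enumerate the $n$ and check $\C_n \not\models \varphi$. Hence $\cfc$ is decidable.

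For the final claim, take a finitely axiomatizable extension $\vdash = \cbigd + \varphi_1 + \dots + \varphi_k$. By Theorem \ref{big crit}, $\vdash$ is locally tabular iff ${\vdash} \nsubseteq \cfc$. This holds iff some $\varphi_i$ is not a theorem of $\cfc$, which is decidable by the previous paragraph.
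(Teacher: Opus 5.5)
Your last two paragraphs match the reasoning the paper gives for the decidability claims. Being the logic of the finite algebras $\C_n^*$, $\cfc$ has the finite model property by definition; once it is finitely axiomatized, its theorems and its non-theorems are both recursively enumerable. By Theorem~\ref{big crit}, a finitely axiomatized extension $\cbigd+\varphi_1+\dots+\varphi_k$ is locally tabular iff some $\varphi_i$ is not a theorem of $\cfc$. All of this, however, rests on the identity $\cfc={\vdash_0}$. The paper does not prove that identity; it imports it from \cite[Thm.~3.3]{MARTINS2025103563}. Your treatment of it is a plan rather than a proof.

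\textbf{The easy direction.} The inclusion ${\vdash_0}\subseteq\cfc$ is a finite check you could have carried out. The non-minimal points of $\C_n$ form a chain, while $\F_0$ has two incomparable non-minimal points, so $\F_0\not\hookrightarrow\C_n$. Since $\C_n^*$ is simple, $\C_n^*\models\J(\F_i)$ just says that $\F_i$ is not a bi-p-morphic image of $\C_n$. For $\F_1$, for example: $\up c_n'=\{c_n',c_n\}$ has two points, whereas the unique minimal point $c$ of $\F_1$ has $|\up c|=3$. This contradicts $f[\up c_n']=\up f(c_n')$ for any bi-p-morphism $f$, since $f$ sends $c_n'$ to a minimal point.

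\textbf{The gap: the converse $\cfc\subseteq{\vdash_0}$.} Two things are missing.
\begin{enumerate}
\item Your steps 1--2 only assert that the axioms ``should force'' comb shape. You never show that the finite co-trees validating them are exactly the $\C_n$ and the $\C_n'$.
\item More seriously, the passage to finite models is left open, and the tools you mention do not work as stated.
\begin{itemize}
\item Local tabularity of ${\vdash_0}$ is not an option: ${\vdash_0}$ is supposed to equal $\cfc$, which is not locally tabular by Theorem~\ref{big crit}.
\item Selective filtration produces subframes, but validity of Jankov formulas is not inherited by subframes. For instance, $\C_2^*\models\J(\F_1)$, yet $\{c_1',c_1,c_2\}$ is a subposet of $\C_2$ containing its co-root and isomorphic to the $3$-chain $\F_1$, whose dual refutes $\J(\F_1)$. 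Collecting witness points and closing under the co-root, as you propose, can produce exactly such a subframe.
\end{itemize}
\end{enumerate}
What is needed is either a finite-model construction that provably stays within the combs and combs with handle, or a direct argument that the algebras of arbitrary (possibly infinite) bi-Esakia co-forests validating ${\vdash_0}$ lie in $\vfc$. That is the substantive content of the cited theorem, and your proposal does not supply it.
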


\begin{figure}[h]
\centering
\begin{tabular}{cccc}
\begin{tikzpicture}
    \tikzstyle{point} = [shape=circle, thick, draw=black, fill=black , scale=0.35]

    \node [label=left:{$d$}] (d) at (-.5,-.25) [point] {};
    \node [label=right:{$e$}] (e) at (.5,-.25) [point] {};
    \node [label=left:{$b$}] (b) at (-.5,.5) [point] {};
    \node [label=right:{$c$}] (c) at (.5,.5) [point] {};
    \node [label=above:{$a$}] (a) at (0,1) [point] {};
    \node [label=below:{\Large{$\F_0$}}] at (0,-.75) [] {};

    \draw (d)--(b)--(a)--(c)--(e);
\end{tikzpicture}
\hspace{1cm}
\begin{tikzpicture}
    \tikzstyle{point} = [shape=circle, thick, draw=black, fill=black , scale=0.35]
    \node [label=left:{$a$}] (a) at (0,1) [point] {};
    \node [label=left:{$b$}] (b) at (0,0) [point] {};
    \node [label=left:{$c$}] (c) at (0,-1) [point] {};
    \node [label=below:{\Large{$\F_1$}}] at (0,-1.5) [] {};

    \draw (c)--(b)--(a);
\end{tikzpicture}
\hspace{1cm}
\begin{tikzpicture}
    \tikzstyle{point} = [shape=circle, thick, draw=black, fill=black , scale=0.35]
    \node [label=above:{$d$}] (d) at (0,0) [point] {};
    \node [label=above:{$c$}] (c) at (0.5,0.5) [point] {};
    \node [label=above:{$b$}] (b) at (01,01) [point] {};
    \node [label=above:{$a$}] (a) at (01.5,01.5) [point] {};
    \node [label=right:{$a'$}] (a') at (2,01) [point] {};
    \node [label=below:{\Large{$\F_2$}}] at (1.2,-.5) [] {};

    \draw (d)--(a)--(a');

\end{tikzpicture}
\hspace{1cm}
\begin{tikzpicture}
    \tikzstyle{point} = [shape=circle, thick, draw=black, fill=black , scale=0.35]

    \node [label=left:{$b$}] (b) at (-.75,0) [point] {};
    \node [label=left:{$c$}] (c) at (0,0) [point] {};
    \node [label=left:{$d$}] (d) at (.75,0) [point] {};
    \node [label=above:{$a$}] (a) at (0,1) [point] {};
    \node [label=below:{\Large{$\F_3$}}] at (0,-.5) [] {};

    \draw (b)--(a)--(c);
    \draw (d)--(a);
\end{tikzpicture}

\end{tabular}
\caption{The co-trees $\F_0$, $\F_1$, $\F_2,\text{ and }\F_3$.}
\label{Fig:the co-trees}
\end{figure}

In this current paper, we contribute to the above discussion by proving in Theorem \ref{main thm} that:
\begin{itemize}
    \item[(C1)] there are only $\aleph_0$ locally tabular extensions of $\cbigd$,
        \vspace{.2cm}
    \item[(C2)] every locally tabular extension of $\cbigd$ is finitely axiomatizable.
\end{itemize}
We recall that $\Lambda(\bigd)$ has the size of the continuum and highlight the sharp contrast with the intuitionistic case, where it is well known that $\vdash_{\mathsf{LC}}$ has $\aleph_0$ extensions, all of which are locally tabular.

Our proof of (C1\&2) begins by noting that Theorem \ref{big crit}, together with the defining property of subframe formulas, entails that an element of $\Lambda(\bigd)$ is locally tabular exactly when it is the bi-intermediate logic of a subvariety of
\[
\V_n \coloneqq \{\A \in \bg\colon \A \models \beta(\C_n)\} = \{\A \in \bg\colon \C_n \not \hookrightarrow \A_*\},
\]
for some positive integer $n$.
Consequently, we can derive (C1) from the equality
\begin{equation}\label{the equality}
    |\bigcup_{n \in \mathbb{Z}^+} \Lambda(\V_n)|=\aleph_0,
\end{equation}
where $\Lambda(\V_n)$ denotes the lattice of nontrivial subvarieties of $\V_n$.

That $\aleph_0 \leq |\bigcup_{n \in \omega} \Lambda(\V_n)|$ follows readily from the simple observation (detailed in Lemma \ref{lem antichains equivalence}, but a glance at Figure \ref{Fig:finite-combs} should be convincing enough) that given positive integers $n<m$, then $\C_n \hookrightarrow \C_m$ but $\C_m \not \hookrightarrow \C_n$, and therefore $\V_n \subsetneq \V_m$.

To prove the reverse inequality, it suffices to show that every $\Lambda(\V_n)$ is at most countable, as this implies
\[
|\bigcup_{n \in \mathbb{Z}^+} \Lambda(\V_n)| \leq \sum_{n \in \mathbb{Z}^+} |\Lambda(\V_n)| \leq \sum_{n \in \mathbb{Z}^+} \aleph_0 = \aleph_0.
\]
We achieve this by relying on two distinct concepts:

\benroman
    \item \underline{\textbf{Specht varieties}} 

    A variety $\V$ is \textit{Specht} when $\V$ and all of its subvarieties are finitely axiomatizable.
 
    In \cite[Thm.~6.21]{CITKIN_2020}, some conditions on a variety $\V$ are shown to be equivalent to this definition, in the case that $\V$ is locally finite, finitely axiomatizable, and congruence distributive (three properties that every $\V_n$ satisfies).
    We state this result in Theorem \ref{specht} for convenience.
    
    In particular, the theorem ensures that for such a variety $\V$, we have that $\V$ is Specht iff $|\Lambda(\V)|\leq \aleph_0$.
    Thus, if we can show that every $\V_n$ satisfies one of the other conditions stated in the theorem, not only do we obtain that $\Lambda(\V_n)$ is at most countable (hence finishing the proof of (\ref{the equality}), and consequently of (C1)), but we also get (C2) by the definition of Specht varieties (recall that every locally finite variety of bi-Gödel algebras must be a subvariety of some $\V_n)$.

    Accordingly, the technical part of this paper is dedicated to proving that every $\V_n$ satisfies condition (iii) of Theorem \ref{specht}.
    Using the (finite) bi-Esakia duality for bi-Gödel algebras (see Theorem \ref{fin duality}) and some notable properties of the variety $\bg$ (see Theorem \ref{props of bg}), we show at the start of Section \ref{Sec counting} that $\V_n$ satisfies Theorem \ref{specht}.(iii) iff
    \[
    \T_n \coloneqq \{\X \colon \X \text{ is a finite co-tree s.t. } \C_n \not \hookrightarrow \X \} 
    \]
     has no infinite antichain w.r.t. $\leq_p$, a partial order called \textit{bi-p-morphic image relation} and defined on the class of finite co-trees by: $\X \leq_p \Y$ iff there exists a surjective bi-p-morphism (see Definition \ref{def bi-p-morphism}) from $\Y$ onto $\X$.

     \item \underline{\textbf{Better partial orders}} 

    We aim to show that the posets $(\T_n, \leq_p)$ cannot contain infinite antichains. 
    Even when considering the whole family of finite co-trees, the definition of $\leq_p$ already forbids the existence of infinite descending chains.
    Hence our goal can be restated as: every $(\T_n, \leq_p)$ is a \textit{well partial order} (WPO for short), i.e., a poset without infinite descending chains nor infinite antichains.

     That $(\T_1, \leq_p)$ is a WPO is trivial (since $\C_1$ is a two-element chain, $\T_1$ contains only one member, the singleton co-tree) and it is not hard to see that so is $(\T_2, \leq_p)$ (we prove in Proposition \ref{prop tau is iso} that this poset is order isomorphic to $(\omega, \leq) \times (\omega,\leq)$, a finite product of two WPOs and thus a WPO as well).
     
     We then observe that any co-tree $\X$ in $\T_{n+1}$ can be identified in a unique manner as a pair, consisting of: the upper part of $\X$, which is a co-tree in $\T_2$; and the lower part part of $\X$, which is a finite multiset of co-trees in $\T_n$.
     This is depicted in Figure \ref{Fig:structure} and formalized in (the discussion that accompanies) Lemma \ref{lem structure}. 
     In fact, it is shown in Lemma \ref{lem ord reflecting} that this identification induces an order reflecting map
     \[
     \pi \colon (\T_{n+1}, \leq_p) \to (\T_2, \leq_p) \times (\T_n^\#, <<),
     \]
     where $\T_n^\#$ denotes the set of finite multisets of $\T_n$ and $<<$ is
     the partial order introduced in Definition \ref{def multi proj}, which we termed the \textit{multiset projectivity relation}.
     
     If we could show that the statement ``if $(\T_n,\leq_p)$ is a WPO then so is $(\T_n^\#, <<)$'' holds true, we would be primed for an induction proof, because then the hypothesis that $(\T_n,\leq_p)$ is a WPO would yield that $(\T_2, \leq_p) \times (\T_n^\#, <<)$ is a finite product of two WPOs, hence it is itself a WPO, and thus the existence of the map $\pi$ would guarantee\footnote{This is because the image under $\pi$ of an infinite antichain in $\T_{n+1}$ would not be an infinite antichain in the WPO $(\T_2, \leq_p) \times (\T_n^\#, <<)$, so the order reflective property of this map yields a contradiction.
     Here, it might provide some intuition (and reassurance that our way of viewing the elements of $\T_{n+1}$ is almost faithful w.r.t. the order $\leq_p$) to note that the map $\pi$ is an \lq order embedding where it matters', since its restriction to $\T_{n+1} \smallsetminus \T_{n}$ is order invariant.
     And, if we are working under the assumption that $(\T_n, \leq_p)$ is a WPO, then to conclude that so is $(\T_{n+1}, \leq_p)$, it suffices to forbid infinite antichains in $\T_{n+1} \smallsetminus \T_{n}$, since any infinite antichain in $\T_{n+1}$ can only contain finitely many elements of $\T_n$.} that indeed $(\T_{n+1}, \leq_p)$ is a WPO.

     Unfortunately, the definition of a WPO appears to lack the strength to ensure that in general, the multiset projectivity relation on the finite multisets of a WPO must also be a WPO. 
     However, we prove in Theorem \ref{thm multi proj is bpo} that this desired `transfer property' holds when we restrict our setting to \textit{better partial orders} (BPOs for short), a particular type of WPOs discussed in Section \ref{Sec bpos}.
     
     Since Proposition \ref{prop tau is iso} shows that $(\T_2, \leq_p)$ is a product of two well orders, hence a BPO by Theorem \ref{thm bpo generation}, we can use the strategy detailed above for the inductive step, but with the term WPO replaced with BPO everywhere, to prove that every $(\T_{n}, \leq_p)$ is a BPO.
     Since BPOs are WPOs, and WPOs have no infinite antichains, this satisfies our initial goal.
\eroman

This paper is structured as follows.
In Section \ref{Sec bpos}, we present the necessary machinery to prove in Theorem \ref{thm multi proj is bpo} that our multiset projectivity relation defined on the set of finite multisets of a BPO must also be a BPO.
Section \ref{Sec bg} is just a brief overview of the duality between finite bi-Gödel algebras and finite co-forests, and of the properties of the variety $\bg$ we will need.
The beginning of Section \ref{Sec counting} explains why our main Theorem \ref{main thm} will follow from showing that every $(\T_n, \leq_p)$ has no infinite antichains, while the remainder of the section is dedicated to the proof that every $(\T_n, \leq_p)$ is a BPO.

For convenience and ease of reference, we compile in the following theorem all the aforementioned results concerning local tabularity in $\Lambda(\bigd)$.
We also present in Figure \ref{Fig:the lattice} a depiction of the dual of this lattice.

\begin{Theorem}\label{The big theorem}
Consider the lattice $\Lambda(\bigd)$ of axiomatic extensions of the bi-intuitionistic Gödel-Dummett logic 
\[
\vdash_{\lc} \coloneqq \cbipc + (p \to q) \lor (q \to p),
\]
i.e., the logic algebraized by the variety of bi-Gödel algebras $\bg$, and let $\cfc$ be the logic of the finite combs and $\vfc$ its variety.
    \benormal
        \item For $\vdash \in \Lambda(\bigd)$, the following conditions are equivalent:
            \benroman
                \item $\vdash \textit{ is locally tabular}$;
                \item ${\vdash} \nsubseteq \cfc$;
                \item $\vdash \J(\C_n) \textit{ for some } n \in \ZZ^+$;
                \item $\vdash \beta(\C_n) \textit{ for some } n \in \ZZ^+$.
            \eroman 

        \item For $\V \subseteq \bg$, the following conditions are equivalent:
            \benroman
                \item $\V$ is locally finite;
                \item $\vfc \nsubseteq \V$;
                \item $\V \textit{ omits the algebraic dual of a finite comb}$;
                \item there exists a natural bound for the size of the finite combs that can be order embedded into the bi-Esakia models of $\V$.
            \eroman 

        \item The logic $\cfc$ of the finite combs is decidable and is the unique pre-locally tabular logic in $\Lambda(\bigd)$.
        
        \item If $\vdash \in \Lambda(\bigd)$ is finitely axiomatizable, then determining if $\vdash$ is locally tabular is a decidable problem.

        \item If $\vdash \in \Lambda(\bigd)$ is not finitely axiomatizable, then it is not locally tabular.

        \item While $\Lambda(\bigd)$ has the size of the continuum, it contains only $\aleph_0$ locally tabular logics, all of which are finitely axiomatizable.
    \enormal 
\end{Theorem}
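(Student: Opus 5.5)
Items 1--4 are compiled from earlier results, so the plan is to cite them and spend the effort on items 5 and 6. Items 1 and 2 are Theorem \ref{big crit}, split into its logical half, (i)--(iv), and its algebraic half, (v)--(viii); this uses the dual isomorphism between $\Lambda(\bigd)$ and the nontrivial subvarieties of $\bg$. Item 3 follows from the decidability result of \cite[Thm.~3.3]{MARTINS2025103563}, which rests on the finite axiomatization of $\cfc$ and its finite model property. Uniqueness of the pre-locally tabular logic comes from the equivalence (i)$\Leftrightarrow$(ii). That equivalence makes $\cfc$ itself not locally tabular and every proper extension of it locally tabular. Conversely, any non-locally tabular $\vdash$ satisfies ${\vdash}\subseteq\cfc$, so if all its proper extensions are locally tabular, it must equal $\cfc$.

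For item 4, I would reduce to checking ${\vdash}\nsubseteq\cfc$. If $\vdash$ is axiomatized over $\cbigd$ by finitely many formulas, this holds iff some axiom is not a theorem of $\cfc$. Decidability of $\cfc$ (item 3) makes that finitely many decidable checks. Item 5 follows from item 6 by contraposition: every locally tabular logic is finitely axiomatizable.

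Item 6 is the real content, and I would follow the strategy laid out in the introduction. First, $|\Lambda(\bigd)|=2^{\aleph_0}$ is \cite[Thm.~4.16]{Paper1}. Next, by items 1 and 2 together with the subframe Lemma \ref{subframe lemma}, the locally tabular logics correspond exactly to the nontrivial subvarieties of the varieties $\V_n$. The lower bound $\aleph_0$ comes from the strict chain $\V_1\subsetneq\V_2\subsetneq\cdots$, which Lemma \ref{lem antichains equivalence} gives because $\C_n\hookrightarrow\C_m$ but $\C_m\not\hookrightarrow\C_n$ for $n<m$. For the upper bound and for finite axiomatizability, I would apply Theorem \ref{specht} to each $\V_n$. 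Its hypotheses hold: $\V_n$ is locally finite by item 2, finitely axiomatized by $\beta(\C_n)$ over $\bg$, and congruence distributive, as any lattice-based variety is. It then suffices to verify condition (iii) of that theorem. Via finite duality (Theorem \ref{fin duality}) and Theorem \ref{props of bg}, condition (iii) reduces to $(\T_n,\leq_p)$ having no infinite antichain.

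The main obstacle is proving that every $(\T_n,\leq_p)$ is a BPO, which I would do by induction on $n$. The base case comes from Proposition \ref{prop tau is iso}: it gives $(\T_2,\leq_p)\cong(\omega,\leq)\times(\omega,\leq)$, which is a BPO by Theorem \ref{thm bpo generation}. For the inductive step, I would use the decomposition of Lemma \ref{lem structure} and the order-reflecting map $\pi\colon\T_{n+1}\to\T_2\times\T_n^\#$ of Lemma \ref{lem ord reflecting}. By Theorem \ref{thm multi proj is bpo}, $(\T_n^\#,<<)$ is a BPO, so the product is a BPO by Theorem \ref{thm bpo generation}. Since BPOs are preserved under order-reflecting preimages, $(\T_{n+1},\leq_p)$ is a BPO. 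Consequently each $\V_n$ is Specht, so $|\Lambda(\V_n)|\le\aleph_0$ and all its subvarieties are finitely axiomatizable. Summing over $n$ gives exactly $\aleph_0$ locally tabular logics, and each is finitely axiomatizable because its variety is a subvariety of some Specht variety $\V_n$.
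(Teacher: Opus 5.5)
Your proposal is correct and follows the paper's own route. Items 1--4 are read off from Theorem \ref{big crit} and \cite[Thm.~3.3]{MARTINS2025103563}, item 5 is the contrapositive of item 6, and item 6 combines \cite[Thm.~4.16]{Paper1} with Theorem \ref{specht} applied to each $\V_n$, which duality reduces to showing $(\T_n,\leq_p)$ is a BPO by induction via $\pi$, Theorem \ref{thm multi proj is bpo} and Theorem \ref{thm bpo generation}; your induction starts at $n=2$, so you should state separately the trivial case $\T_1=\{\bullet\}$.
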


\begin{figure}[h]
\begin{tikzpicture}[scale=.5]
    \tikzstyle{point} = [shape=circle, thick, draw=black, fill=black , scale=0.35]
    \node [label=below:{$\mathbb{V}(\C_0')=\bacat=\mathbb{V}(\La_1)$}] at (0,0) [point] {};
    \node [label=left:{$\mathbb{V}(\C_1)$}] [label=right:{$\mathbb{V}(\La_2)$}] at (0,3) [point] {};
    \node [label=right:{$\mathbb{V}(\La_3)$}] at (1,4) [point] {};
    \node [label=right:{$\mathbb{V}(\La_4)$}] at (2,5) [point] {};
    \node [label=right:{$\V_{\bilc}$}] at (4,7) [point] {};
    \node [label=left:{$\mathbb{V}(\C_1')$}] at (-4,7) [point] {};
    \node [label=right:{$\V_2 $}] at (0,11) [point] {};
    \node [label=left:{$\mathbb{V}(\C_2)$}] at (-5,8) [point] {};
    \node [label=right:{$\V_2 + \mathbb{V} (\C_2)$}] at (-1,12) [point] {};
    \node [label=left:{$\mathbb{V}(\C_2')$}] at (-9,12) [point] {};
    \node [label=right:{$\V_3 $}] at (-5,16) [point] {};
    \node [label=right:{$\V_3 + \mathbb{V} (\C_3)$}] at (-6,17) [point] {};
    \node [label=left:{$\mathbb{V}(\C_3)$}] at (-10,13) [point] {};
    \node [label=below:{$\V_{FC}$}] (A) at (-14,17) [point] {};
    \node [label=above:{$\bg$}] (B) at (-14,25) [point] {};
    \node [label=above:{$\aleph_0$}] at (0,6) [] {};
    \node [label=above:{$\emptyset$}] at (-2.5,8.5) [] {};
    \node [label=above:{$\aleph_0$}] at (-5,11) [] {};
    \node [label=above:{$\emptyset$}] at (-7.5,13.5) [] {};
    \node [label=above:{$\aleph_0$}] at (-10,16) [] {};
    \node [label=above:{$2^{\aleph_0}$}] at (-14,20) [] {};

    \draw (0,0)--(0,3)--(2,5);
    \draw[dotted] (2,5)--(4,7);
    \draw (0,3)--(-10,13);
    \draw[dashed] (4,7)--(0,11)--(-4,7);
    \draw[dashed] (-5,8)--(-1,12);
    \draw[dashed] (-9,12)--(-5,16)--(-1,12);
    \draw[dashed] (-10,13)--(-6,17);
    \draw (0,11)--(-1,12);
    \draw (-5,16)--(-6,17);
    \draw[dotted] (-10,13)--(-14,17);
    \draw[dashed] (-6,17) to[bend right] (-14,25);
    \draw (-14,21) ellipse (2cm and 4cm);
\end{tikzpicture}
\caption{The lattice $\Lambda(\bg)$ of nontrivial subvarieties of bi-Gödel algebras.}
\label{Fig:the lattice}
\end{figure}
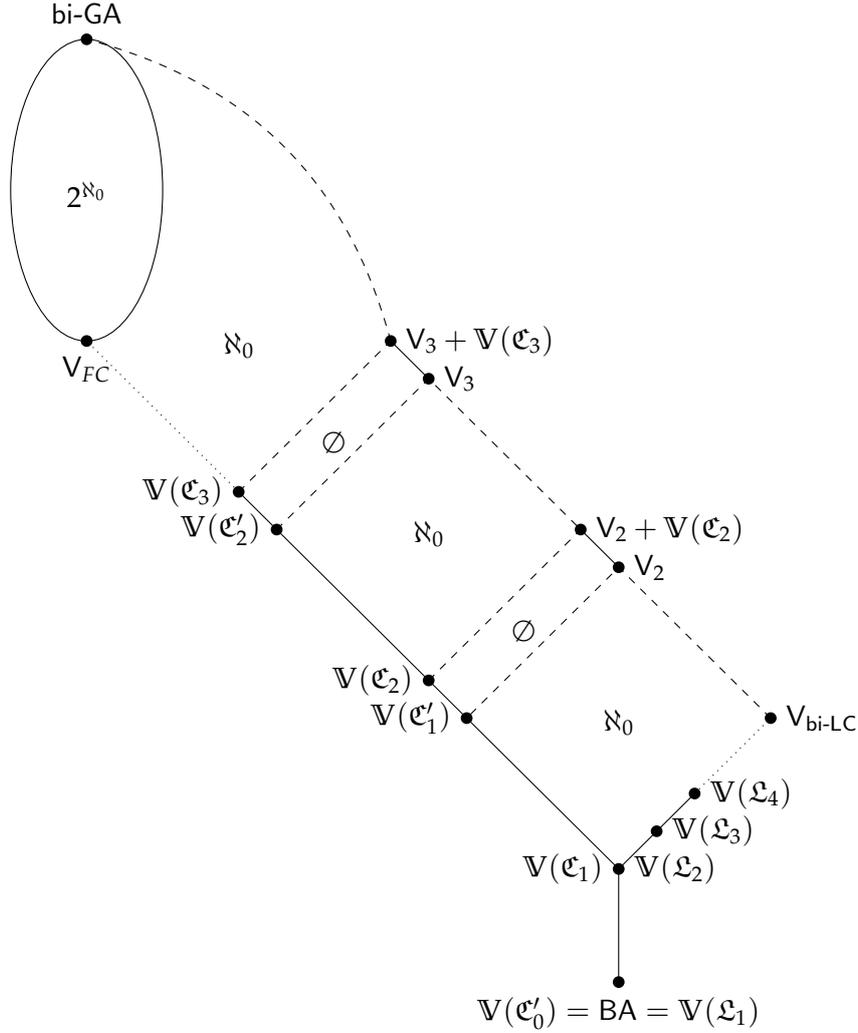

In Figure \ref{Fig:the lattice}: a full line connecting two points means that they are related by the immediate predecessor/successor relation; a dotted line represents a chain of order type $(\omega + 1)$; and a dashed line represents an infinite but unknown structure, which is not necessarily a chain.
We use $+$ for the join operation of the lattice $\Lambda(\bg)$.

Recall that $\vfc \coloneqq \HHH \SSS \PPP\{\C_n^* \colon n \in \ZZ^+\}$ is the variety which algebraizes the logic of the finite combs $\cfc$, and that for each positive $n$,
\[
\V_n \coloneqq \{\A \in \bg\colon \A \models \beta(\C_n)\} = \{\A \in \bg\colon \C_n \not \hookrightarrow \A_*\}.
\]
This lattice admits the following partition
\[
\Lambda(\bg) = \up \V_{FC} \biguplus \bigcup_{n\in \ZZ^+} \down \V_n,
\]
where $\up \V_{FC}$ contains all the $2^{\aleph_0}$ non-locally finite varieties of bi-Gödel algebras, while any of the $\aleph_0$ locally finite ones is a subvariety of some $\V_n$.
We note that since $\bg$ has the FMP, the only variety which contains all of the $\V_n$ is $\bg$ itself.

We denote by $\V_{\bilc}$ the variety of bi-Gödel algebras axiomatized by the bi-intuitionistic linear calculus (i.e., the bi-intuitionistic logics of chains)
\[
\vdash_{\operatorname{\mathsf{bi-LC}}} = \cbipc + (p \to q) \lor (q \to p) + \neg[ (q\gets p) \land (p \gets q)].
\]

For $n \in \omega$, we denote the $n$\textit{-chain} (i.e., a linearly ordered poset with $n$ elements) by $\La_n$, and the $n$\textit{-comb with handle} by $\C_n'$ (see Figure \ref{Fig:finite-hcombs2}).
\begin{figure}[h]
\centering
\begin{tabular}{c}
\begin{tikzpicture}
    \tikzstyle{point} = [shape=circle, thick, draw=black, fill=black , scale=0.35]
    \node [label=left:{$y_0$}] (0) at (0,0) [point] {};
    \node [label=right:{$y_1'$}] (1') at (1,0) [point] {};
    \node [label=left:{$y_1$}] (1) at (0.5,0.5) [point] {};
    \node [label=right:{$y_2'$}] (2') at (1.5,.5) [point] {};
    \node [label=left:{$y_2$}] (2) at (1,1) [point] {};
    \node [label=above:{$y_n$}] (n) at (1.75,1.75) [point] {};
    \node [label=right:{$y_n'$}] (n') at (2.25,1.25) [point] {};
    
    \draw (0)--(1)--(2);
    \draw (1')--(1);
    \draw (2')--(2);
    \draw (n')--(n);
    \draw [dotted] (2)--(n);
\end{tikzpicture}
\end{tabular}
\caption{The $n$-hcomb $\C_n'$.}
\label{Fig:finite-hcombs2}
\end{figure}
Given a finite co-tree $\X$, we write $\mathbb{V}(\X)$ as a shorthand for the variety $\HHH\SSS\PPP\{\X^*\}$ generated by the algebraic dual of $\X$.
We have that 
\[
\mathbb{V}(\La_1) \prec \mathbb{V}(\La_2) \prec \dots \prec \V_{\bilc} \hspace{.3cm} \text{and} \hspace{.3cm} \down \V_{\bilc} = \{\mathbb{V}(\La_1), \mathbb{V}(\La_2), \dots,\V_{\bilc}\},
\]
and 
\[
\mathbb{V}(\C'_0) \prec \mathbb{V}(\C_1) \prec \mathbb{V}(\C'_1) \prec  \mathbb{V}(\C_2) \prec \dots \prec \V_{FC} \hspace{.3cm} \text{and} \hspace{.3cm} \down \V_{FC} = \{\mathbb{V}(\C'_0), \mathbb{V}(\C_1), \dots,\V_{FC}\},
\]
where $\prec$ represents the immediate predecessor/successor relation.
A proper justification of this depiction can be found in Theorem \ref{thm depiction}.

\section{Better partial orders}\label{Sec bpos}
In this section we introduce the necessary concepts from order theory that we will need throughout.
We then use them to show that equipping the set of finite multisets of a BPO with the multiset projectivity relation yields a BPO.

When $A$ is a set and $n$ is an element of the natural numbers $\omega$, we denote the set of: finite subsets of $A$ by $[A]^{<\omega}$; infinite subsets of $A$ by $[A]^{\omega}$; subsets of $A$ of cardinality $n$ by $[A]^{n}$.

We will always identify a subset $B \subseteq \omega$ with its increasing enumeration. 
Given another $C \subseteq \omega$, we say that $C$ \textit{extends} $B$, in symbols $B \sqsubseteq C$, exactly when the increasing enumeration of $C$ extends that of $B$ (e.g., $\{2,4,6\} \sqsubseteq \{2,4,6,8,10\}$).
In other words, when $C$ is an initial segment of $B$ w.r.t. the natural order of $\omega$.

Let $(P, \leq_P)$ be \textit{poset}, i.e., a set $P$ equipped with a \textit{partial order} $\leq_P$ (a binary relation on $P$ that is reflexive, transitive, and antisymmetric).
Sometimes we simply write $\leq$ or $P$ instead of $\leq_P$ or $(P, \leq)$, respectively.
If $p,q\in P$ are such that $p \nleq q$ and $q \nleq p$, they are said to be \textit{incomparable}.
Otherwise, they are \textit{comparable}.
If $P$ has no incomparable elements, we call $\leq$ a \textit{linear order}.

A \textit{subposet} of $P$ is a subset $Q$ equipped with the induced order $\leq_Q \coloneqq Q^2 \cap \leq_P$, so we can always view subsets as subposets and write $(Q, \leq_Q)$ or $(Q,\leq)$ or $Q$ interchangeably.
If $Q$ is a subposet of $P$ such that $\leq_Q$ is a linear order, then $Q$ is said to be a \textit{chain} (in $P$).

When a subposet $Q$ has a least element (i.e., a point $t \in Q$ satisfying $t \leq q$ for every $q \in Q$), we call it the \textit{minimum} of $Q$ and denote it by $Min(Q)$.
A point $t \in Q$ is \textit{minimal} (in $Q$) if $q\leq t$ implies $q =t$ for all $q\in Q$.
The definitions of the \textit{maximum} of a subposet and of its \textit{maximal} points are analogous, hence omitted.

Given $A \subseteq P$, the set $\down A \coloneqq \{p\in P \colon \exists a \in A \, (p\leq a)\}$ is called the \textit{downset (of $P$) generated} by $A$ and if $A = \down A$, then $A$ is a \textit{downset} (of $P$).
When $A=\{a\}$ we simply write $\down a \coloneqq \down A$ and call it a \textit{principal downset}.
The arrow operator $\up$ and the notion of the (principal) upsets of $P$ are defined analogously, hence omitted.
We denote the set of downsets of $P$ by $Down(P)$ and that of its upsets by $Up(P)$.

Let $Q$ be another poset.
An \textit{order embedding} of $P$ into $Q$ is a map $h \colon P \hookrightarrow Q$ that is \textit{order invariant}, i.e., 
\[
p \leq_P p' \iff h(p) \leq_Q h(p')
\]
holds for all $p,p' \in P$.
The left to right implication of the previous equivalence is called \textit{order preservation}, while its converse is called \textit{order reflection}.
It is easy to see that order embeddings are necessarily injective.
If moreover the map $h$ is surjective, we call it an \textit{order isomorphism}, and say that the posets $P$ and $Q$ are \textit{(order) isomorphic}.

A straightforward argument ensures that $P$ order embeds into $Q$ iff $P$ is isomorphic to a subposet of $Q$.
We sometimes say that in this case, there exists a copy of $P$ inside of $Q$, and denote this by $P \hookrightarrow Q$.
Otherwise, we write $P \not \hookrightarrow Q$.

The \textit{(direct) product} of these posets is denoted $(P \times Q, \leq_P \times \leq_Q) \coloneqq (P, \leq_P) \times (Q, \leq_Q)$, where, for all $p,p'\in P$ and $q,q' \in Q$, the order is defined by
\[
(p,q) \leq_P \times \leq_Q (p',q') \iff p \leq_P p' \text{ and } q \leq_Q q'.
\]

An \textit{(infinite) sequence} in $P$ is a map $f \colon \omega \to P$.
It is often convenient to denote a sequence $f$ by $\seq$, where $p_i \coloneqq f(i)$ for each $i \in \omega$.
We call a sequence $\seq$ in $P$ \textit{bad} when $p_i \nleq p_j$ for all $i < j \in \omega$.
Two particular types of bad sequences in $P$ are:
\benroman
    \item \textit{infinite descending chains}, i.e., sequences $\seq$ such that $p_i > p_{i+1}$ for all $i \in \omega$;

    \item \textit{infinite antichains}, i.e., sequences $\seq$ whose elements are pairwise incomparable.
\eroman

We say that $P$ is: \textit{well-founded} if it has no infinite descending chains; a \textit{well partial order} (WPO for short) if it is well-founded and has no infinite antichains; a \textit{well-order} if it is well-founded and $\leq$ is linear.

\begin{Proposition}\label{prop wpos}
    The following conditions hold for a poset $P$:
    \benroman
    \item $P$ is well-founded iff every nonempty linearly ordered subposet has a minimum;
    \item\cite[Prop.~I.1.1]{BPO} $P$ is a WPO iff there is no bad sequence in $P$ iff $(Down(P), \subseteq)$ is well-founded. 
    \eroman 
\end{Proposition}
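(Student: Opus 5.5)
For (i), I will prove both directions directly from the definitions. Suppose first that $P$ is well-founded and let $C \subseteq P$ be a nonempty chain. If $C$ had no minimum, then, since $C$ is linear, no element of $C$ would be minimal. Starting from any $c_0 \in C$, I pick $c_1 \in C$ with $c_1 < c_0$, then $c_2 < c_1$, and so on; this uses dependent choice. The result is an infinite descending chain, which is a contradiction. Conversely, if $\seq$ is an infinite descending chain, then $\{p_i \colon i \in \omega\}$ is a nonempty linearly ordered subposet. Any $p_i$ in it is strictly above $p_{i+1}$, so it has no minimum. This gives the contrapositive.

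For (ii), the statement is attributed to \cite[Prop.~I.1.1]{BPO}, and I will reproduce the standard argument as a chain of implications.

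\textbf{Step 1: no bad sequence implies WPO.} This is immediate, because infinite descending chains and infinite antichains are both bad sequences.

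\textbf{Step 2: WPO implies no bad sequence.} Given a bad sequence $\seq$, I colour each pair $\{i<j\}$ as follows: ``descending'' if $p_i > p_j$, and ``incomparable'' otherwise. Badness rules out $p_i \leq p_j$, so these two colours cover every pair. By Ramsey's theorem for pairs there is an infinite homogeneous set. Homogeneity in ``descending'' gives an infinite descending chain, and homogeneity in ``incomparable'' gives an infinite antichain. Either way $P$ is not a WPO.

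\textbf{Step 3: no bad sequence iff $(Down(P),\subseteq)$ is well-founded.} Suppose $D_0 \supsetneq D_1 \supsetneq \cdots$ is an infinite strictly descending chain of downsets. I choose $p_i \in D_i \smallsetminus D_{i+1}$. For $i<j$ we have $p_j \in D_j \subseteq D_{i+1}$. If $p_i \leq p_j$ held, then $p_i \in D_{i+1}$ because $D_{i+1}$ is a downset, which is false. Hence $\seq$ is bad.

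Conversely, let $\seq$ be bad and set $D_i \coloneqq P \smallsetminus \up\{p_0,\dots,p_{i-1}\}$. Each $D_i$ is a downset, being the complement of an upset, and the sequence is decreasing. Badness gives $p_i \notin \up\{p_0,\dots,p_{i-1}\}$, so $p_i \in D_i$. On the other hand $p_i \notin D_{i+1}$. So the descent is strict, and $(Down(P),\subseteq)$ is not well-founded.

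Every step here is routine. The only nontrivial ingredient is the appeal to Ramsey's theorem in Step 2, and I regard that step as the main point to get right.
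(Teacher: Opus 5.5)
Your proposal is correct. Its core step, extracting an infinite descending chain or an infinite antichain from a bad sequence via Ramsey's theorem for pairs with the colours ``descending'' and ``incomparable'', is exactly the argument the paper sketches. The paper gives no argument for (i) or for the equivalence with well-foundedness of $(Down(P),\subseteq)$ and defers to the cited reference; your direct proofs of these are sound and fill that gap (dependent choice for (i); the downsets $P \smallsetminus \up\{p_0,\dots,p_{i-1}\}$ and the points $p_i \in D_i \smallsetminus D_{i+1}$ for Step 3).
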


\noindent \textit{Proof sketch.}
A common proof strategy for the first equivalence of condition (ii) is to extract an infinite descending chain or an infinite antichain from an arbitrary bad sequence in P.
This can be easily done using Ramsey's Theorem, a result whose generalization presented in Proposition \ref{prop front ramsey} is of the utmost importance to the theory of BPOs.
Given this, seeing a simple application of a `Ramsey argument' might provide some intuition for what follows.

For suppose that we have a bad sequence $\seq$ in P.
Then for any $i < j \in \omega$ we have $p_i \nleq p_j$, which implies either $p_j < p_i$ or that $p_i$ and $p_j$ are incomparable.
In case of the former we set $g(\{i,j\}) \coloneqq 0$, otherwise we set $g(\{i,j\}) \coloneqq 1$. 
This defines a map $g\colon [\omega]^2 \to 2$ and Ramsey's Theorem ensures the existence of an infinite subset $H\subseteq \omega$ such that the image of $g$ is constant when restricted to $[H]^2$.
We can now construct in $P$ an infinite descending chain if the aforementioned constant image is $0$, or an infinite antichain if the constant image is $1$, thus proving that $P$ is not a WPO. \qed

\vspace{.3cm}

Let $P$ be a WPO.
In view of the second equivalence of Proposition \ref{prop wpos}.(ii), it is natural to ask what conditions one needs to impose on $P$ in order to forbid infinite antichains in $(Down(P), \subseteq)$, thus yielding another WPO.
That additional properties are indeed required for this transfer of WPOness can be witnessed by Rado's Poset (see Figure \ref{fig:Rado}), a WPO with infinite antichains of downsets.
This example is minimal in the sense that if $Q$ is a WPO but $(Down(Q), \subseteq)$ is not, then Rado's poset order embeds into $Q$ (see, e.g., \cite[Ex.~I.1.2]{BPO} and the subsequent discussion).

It turns out that not allowing \textit{bad sequences of sequences} in $P$, i.e., maps $f\colon [\omega]^2 \to P$ satisfying, for $m,n,l \in \omega$,
\[
m<n<l \implies f(\{m,n\}) \nleq f(\{n,l\}),
\]
is enough to ensure that $(Down(P), \subseteq)$ is a WPO.
And if one wants to guarantee that \\
$(Down(Down(P)), \subseteq)$ will also be a WPO, then it suffices to forbid `bad sequences of sequences of sequences' in $P$, and so on.
This idea of `bad sequences of .... of sequences' can be formalized using fronts and super-sequences, and forbidding bad super-sequences leads to the notion of better partial orders.
We proceed to properly define all this terminology.

\begin{figure}
    \centering
    \includegraphics[width=0.5\linewidth]{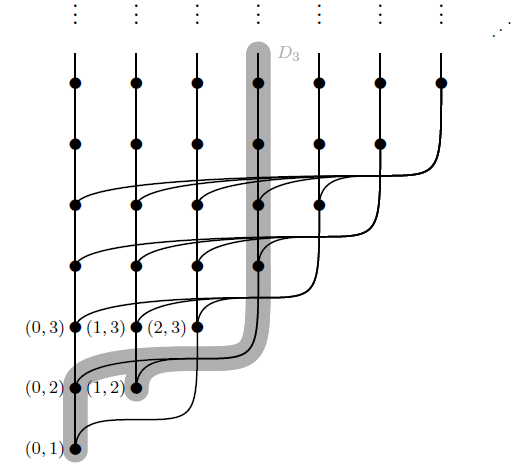}
    \caption{Rado's Poset}
    \label{fig:Rado}
\end{figure}

\begin{defi}\label{def fronts}
    Let $A \in [\omega]^\omega$ and $F \subseteq [\omega]^{< \omega}$.
    We say that $F$ is a \textit{front} on $A$ when the following conditions hold:
    \benroman
        \item either $F=\{\emptyset\}$ or $\bigcup F = A$;
        
        \item $\forall s,t \in F \, \big( s \sqsubseteq t \implies s=t \big);$

        \item $\forall B \in [A]^\omega, \exists! s \in F \, \big(s \sqsubseteq B).$
    \eroman
    In the case that $F=\{\emptyset\}$, we call it the \textit{trivial front} and note that by definition, it is a front on every $B \in [\omega]^\omega$.
    On the other hand, when $F$ is a nontrivial front on $A$, then $A$ is unique for this property since $\bigcup F=A$.
    Because of this, we can always introduce such a nontrivial front $F$ without explicit mention of $A$.
\end{defi}

\begin{exa}
    The set $[\omega]^n$ of subsets of $\omega$ with cardinality $n\in \omega$ is always a front on $\omega$.
\end{exa}

Given $s,t \in [\omega]^{< \omega}$ we say that $t$ is a \textit{shift} of $s$, in symbols $s \lhd t$, when there exists $B \in [\omega]^\omega$ such that $s \sqsubseteq B$ and $t \sqsubseteq B \smallsetminus Min(B)$.

\begin{defi}
    Let $\leq$ be a partial order on a set $P$ and let $F$ be a front.
    \benroman
        \item A map $f\colon F \to P$ is called a \textit{super-sequence} in $P$.

        \item A super-sequence $f\colon F \to P$  is said to be \textit{bad for $\leq$} if $s \lhd t$ implies $f(s) \nleq f(t)$, for all $s,t \in F$.

        \item A \textit{sub-super-sequence} of a super-sequence $f\colon F \to P$ is a restriction $f {\restriction_G} \colon G \to P$ of $f$ to some front $G \subseteq F$.
    \eroman
    A poset $(P,\leq)$ is a \textit{better partial order} (BPO for short) if there are no bad super-sequences for $\leq$.
\end{defi}

Recall that WPOs admit the definition of being posets without bad sequences.
Noting that a sequence $f \colon \omega \to P$ in a poset $P$ can always be regarded as a super-sequence $f \colon [\omega]^1 \to P$, it is clear that BPOs are a particular case of WPOs.
Crucially for our purposes, BPOs satisfy the following Ramsey-like property, which follows as a corollary to the Nash-Williams Theorem for fronts (see, e.g., \cite[Thm.~I.2.11]{BPO}).

\begin{Proposition}[{\cite[Prop.~I.3.13]{BPO}}] \label{prop front ramsey}
    Let $R$ a binary relation on a set $P$ and let $F$ be a front.
    Every super-sequence $f \colon F \to P$ admits a sub-super-sequence $f {\restriction_G} \colon G \to P$ that satisfies exactly one of the following conditions:
    \benroman
        \item $\forall s,t \in G \, \big( s \lhd t \implies f{\restriction_G}(s) R f{\restriction_G}(t) \big)$;

        \item $\forall s,t \in G \, \big( s \lhd t \implies \neg( f{\restriction_G}(s) R f{\restriction_G}(t))\big)$.
    \eroman
\end{Proposition}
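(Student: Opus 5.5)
The plan is to derive the statement from the Nash-Williams Theorem for fronts, in the form: \emph{if $H$ is a front on $A$ and $H = H_0 \cup H_1$, then there are $B \in [A]^\omega$ and $i \in \{0,1\}$ with $H{\restriction}B \coloneqq \{u \in H \colon u \subseteq B\} \subseteq H_i$} (see \cite[Thm.~I.2.11]{BPO}). The idea is to encode the relevant pairs $s \lhd t$ of $F$ as single elements of an auxiliary front, colour those elements according to $R$, and apply Nash-Williams.

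First dispose of the trivial front $F = \{\emptyset\}$. Here $\emptyset \lhd \emptyset$ holds and is the only shift, so $G = F$ works: condition (i) holds if $f(\emptyset) R f(\emptyset)$, and condition (ii) holds otherwise.

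Now let $F$ be a nontrivial front on $A$ and form its ``square''
\[
F^2 \coloneqq \{ s \cup t \colon s,t \in F,\ s \lhd t\}.
\]
I will verify three things.
\begin{enumerate}[\normalfont (a)]
\item If $s \lhd t$ is witnessed by $B$, then $s \cup t \sqsubseteq B$, so $s$ and $t$ are initial segments of the increasing enumeration of $B$.
\item Each $u \in F^2$ determines its pair $(s,t)$ uniquely. Take any infinite extension $C$ of $u$ inside $A$. Then $s$ is the unique element of $F$ with $s \sqsubseteq C$, and $t$ is the unique element of $F$ with $t \sqsubseteq C \smallsetminus Min(C)$, by condition (iii) of Definition \ref{def fronts}.
\item $F^2$ is a front on $A$. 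For existence in condition (iii), given $C \in [A]^\omega$, take $s \sqsubseteq C$ and $t \sqsubseteq C \smallsetminus Min(C)$ in $F$; then $s \cup t \sqsubseteq C$. For uniqueness in condition (iii), and for condition (ii), note that if $u \sqsubseteq u'$ are both in $F^2$, an infinite common extension $C$ forces both decompositions to be the same pair, by the uniqueness in (b). Condition (i) follows since $\bigcup F^2 \supseteq \bigcup F = A$.
\end{enumerate}
I expect this verification of the front axioms for $F^2$ to be the main (if routine) obstacle. The bookkeeping needs care: when $s$ is long, $t$ may be contained in $s$ or extend past it, and the argument must not depend on which case occurs.

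Next, colour $F^2$ by putting $s \cup t \in H_0$ iff $f(s) R f(t)$, and $s \cup t \in H_1$ otherwise; this is well defined by (b). Nash-Williams yields $B \in [A]^\omega$ and $i$ with $F^2{\restriction}B \subseteq H_i$. Set $G \coloneqq F{\restriction}B$. This is a front on $B$: its union is $B$, since every $b \in B$ lies in the element of $F$ initial in $B \smallsetminus \{\text{elements of } B \text{ below } b\}$ (if that element is empty, $F$ is trivial). If $s,t \in G$ and $s \lhd t$, then $s \cup t \subseteq B$, so $s \cup t \in F^2{\restriction}B \subseteq H_i$. Hence (i) holds when $i = 0$ and (ii) holds when $i = 1$.

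Finally, for ``exactly one'': $G$ contains at least one shift pair. Taking $s \sqsubseteq B$ and $t \sqsubseteq B \smallsetminus Min(B)$ in $F$, both lie in $G$ and satisfy $s \lhd t$, so (i) and (ii) cannot hold simultaneously.
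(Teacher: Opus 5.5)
Your proposal is correct. It takes the route the paper indicates when it calls this result a corollary of the Nash-Williams theorem for fronts: you colour the square front $F^2=\{s\cup t \colon s,t\in F,\ s\lhd t\}$ according to $R$, take a homogeneous $B$, and restrict $F$ to $B$. The two points you leave implicit follow at once from the front axioms: $\emptyset\notin F$ when $F$ is nontrivial, which (a) needs, and every $s\in F$ has some $t$ with $s\lhd t$, which gives $\bigcup F^2=A$.
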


Having defined BPOs, we can present some notable properties.

\begin{Theorem}\label{thm bpo generation}
    Let $P$ and $Q$ be posets.
    \benroman
        \item If $P$ is a well-order then it is a BPO.

        \item\cite[Lem.~I.310]{BPO} If $Q$ is a BPO and there exists an order reflecting map $h\colon P \to Q$, then $P$ is also a BPO.

        \item\cite[Prop.~I.3.14]{BPO} If both $P$ and $Q$ are BPOs then so is their product $(P \times Q, \leq_P \times \leq_Q)$.

        \item if $P$ is a BPO then so is $(Down(P), \subseteq)$.
    \eroman 
\end{Theorem}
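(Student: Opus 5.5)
Items (ii) and (iii) are cited, so I only need to argue (i) and (iv).

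\emph{Item (i).} Let $f\colon F\to P$ be a super-sequence with $P$ a well-order, and suppose towards a contradiction that $f$ is bad. Since $\leq$ is linear, badness means that $s\lhd t$ forces $f(t)<f(s)$. The plan is to build a sequence $B\in[\omega]^\omega$ along a nontrivial front on $A$ (when $F=\{\emptyset\}$ is trivial, there are no shifts at all, so we may first handle that degenerate case separately, or note that a single point is trivially not bad only if we adopt the convention that the trivial front yields no bad pairs). Take any $B\in[A]^\omega$. Let $s_0\in F$ be the unique element with $s_0\sqsubseteq B$. Next let $s_1\in F$ be the unique element with $s_1\sqsubseteq B\smallsetminus Min(B)$, then $s_2$ with $s_2 \sqsubseteq B$ minus its first two elements, and so on. Setting $B_k$ to be $B$ with its first $k$ elements removed, we have $s_k\sqsubseteq B_k$ and $s_{k+1}\sqsubseteq B_k\smallsetminus Min(B_k)$, so $s_k\lhd s_{k+1}$ for every $k$. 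Badness then gives $f(s_0)>f(s_1)>\cdots$, an infinite descending chain in a well-order, which is a contradiction.

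\emph{Item (iv).} Suppose $P$ is a BPO and $g\colon F\to Down(P)$ is bad, so that $g(s)\not\subseteq g(t)$ whenever $s\lhd t$. For each pair $s\lhd t$, choose $p(s,t)\in g(s)\smallsetminus g(t)$. The standard move is to pass to the front $F^2:=\{s\cup t\colon s,t\in F,\ s\lhd t\}$, where each $u\in F^2$ decomposes uniquely as $u=s\cup t$. This is the standard fact that the "square" of a front is again a front, and it is the technical point; it can be taken from \cite{BPO}. We then define $h\colon F^2\to P$ by $h(s\cup t):=p(s,t)$.

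To check that $h$ is bad, take $u=s\cup t\lhd v=t'\cup r$. One verifies that $t'=t$, again by the standard lemma on shifts in $F^2$. Then $h(u)\notin g(t)$ while $h(v)\in g(t)$. Since $g(t)$ is a downset, $h(u)\leq h(v)$ would put $h(u)$ in $g(t)$, which is impossible. Hence $h$ is a bad super-sequence in $P$, contradicting the assumption that $P$ is a BPO.

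The main obstacle is the combinatorics of $F^2$: showing that it is a front and that $u\lhd v$ in $F^2$ forces the middle pieces to coincide. I would cite these facts from \cite[Ch.~I]{BPO} rather than reprove them.
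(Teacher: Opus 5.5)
Your proposal is correct. Item (i) uses the same idea as the paper, and item (iv) takes a genuinely different route.

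\textbf{Item (i).} Like the paper, you build $s_0\lhd s_1\lhd\cdots$ and read off a descending chain, but your bookkeeping is better. Fixing one $B\in[A]^\omega$ and using its tails $B_k$ makes $s_k\sqsubseteq B_k$ and $s_{k+1}\sqsubseteq B_k\smallsetminus Min(B_k)$ automatic. The paper's choice $B_1\coloneqq\{a\in A\colon Min(s_1)\leq a\}$ need not have $s_1$ as an initial segment. For example, take $F=[\omega]^2$, $s_0=\{0,1\}$, $s_1=\{1,5\}$: then $B_1=\{1,2,3,\dots\}$ yields $s_2=\{2,3\}$, and $s_1\lhd s_2$ fails.

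\textbf{Item (iv).} The paper gives no argument here beyond obtaining it as a corollary of the general infinite-stability result \cite[Prop.~I.3.18]{BPO}. You instead give the classical direct proof: pass to $F^2$, choose witnesses $p(s,t)\in g(s)\smallsetminus g(t)$, and let the downset property of $g(t)$ do the work. This makes the argument self-contained apart from the combinatorics of $F^2$, and those facts are elementary:
\begin{itemize}
\item for $u=s\cup t\in F^2$, $s$ is the unique element of $F$ that is an initial segment of $u$, and $t$ is the unique one that is an initial segment of $u\smallsetminus Min(u)$;
\item if $u\lhd v$ via $X$, then both $t$ and the first component of $v$ are elements of $F$ that are initial segments of $X\smallsetminus Min(X)$, so they coincide.
\end{itemize}
Your witness choice is also the device the paper uses in Theorem~\ref{thm multi proj is bpo}, run in the opposite direction: there the paper collects point witnesses into downsets, while here you extract point witnesses from downsets. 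What the paper's citation buys is generality: it covers far more labelled structures than downsets.

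\textbf{Correction to your aside on the trivial front.} It is backwards. $\emptyset\lhd\emptyset$ does hold, since any $B$ witnesses it. That is exactly why no map on $\{\emptyset\}$ is bad: badness would require $f(\emptyset)\nleq f(\emptyset)$. If there were no shifts, every such map would be vacuously bad, and then no poset would be a BPO. No special convention is needed. Run your construction with any $B\in[\omega]^\omega$: you get $s_k=\emptyset$ for all $k$ and the same contradiction.
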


\noindent\textit{Proof Sketch.}
    Since all the above conditions are used to show that the partial orders investigated in this paper are in fact BPOs, we hope to provide a bit of intuition with some proof sketches and remarks.
    \benroman
        \item Suppose we have a super-sequence $f \colon F \to P$ which is bad for a well-order $\leq$ on $P$.
        Let $A \coloneqq \bigcup F$ and take $s_0,s_1\in F$ satisfying $s_0 \lhd s_1$, i.e., $s_0 \sqsubseteq B$ and $s_1 \sqsubseteq B \smallsetminus Min(B)$ for some $B \in [\omega]^\omega$.
        We set $B_1 \coloneqq \{a \in A \colon Min(s_1)\leq a\} \in [A]^\omega$, noting that $s_1 \sqsubseteq B_1$.
        It is clear that $B_1 \smallsetminus Min(B_1) \in [A]^\omega$, so using condition (iii) of Definition \ref{def fronts} yields a unique $s_2 \in F$ such that $s_2 \sqsubseteq B_1 \smallsetminus Min(B_1)$, hence $s_1 \lhd s_2$ follows.
        This procedure constructs an infinite sequence $(s_i)_{i \in \omega}$ satisfying $s_i \lhd s_{i+1}$ for all $i \in \omega$.
        But we assumed $f$ to be bad for $\leq$, so $s_i \lhd s_{i+1}$ forces $f(s_i) \nleq f(s_{i+1})$, and since $\leq$ is a well-order, hence linear, we obtain $f(s_{i+1}) < f(s_i)$.
        Thus, $(f(s_i))_{i \in \omega}$ is an infinite descending chain in the well-order $(P, \leq)$, a contradiction.

        \item If a super-sequence $f \colon F \to P$ is bad for $\leq_P$, then any order reflecting map $h\colon P \to Q$ yields a super-sequence $h \circ f \colon F \to Q$ which is bad for $\leq_Q$.

        \item Suppose that $f \colon F \to P \times Q$ is bad for $\leq_P \times \leq_Q$ and define a binary relation $R$ on $P\times Q$ by: $(p,q) R (p',q')$ iff $p \leq_P p'$.
        Then Proposition \ref{prop front ramsey} ensures the existence of a sub-super-sequence of $f$ that can either be used to construct a super-sequence which is bad for $\leq_Q$, or one which is bad for $\leq_P$.

        \item As previously mentioned, this property is one of the features of BPOs that motivated their investigation, and is a particular case of a phenomenon called \textit{infinite stability}.
        Broadly, it states that if $P \mapsto \mathcal{O}(P)$ is an infinitary operation (in the sense that elements of $\mathcal{O}(P)$ can be viewed as objects labeled by (possibly infinitely many) elements of $P$), then $\mathcal{O}(P)$ is a BPO if so is $P$.
        This is proved, e.g. in \cite[Prop.~I.3.18]{BPO}, and the current condition (iv) follows as an immediate corollary.
    \eroman

When $A$ is a set, a \textit{finite multiset} of $A$ is a finite list $[a,b,c,\dots]$ of possibly repeated elements of $A$ in which the order does not matter.
In other words, a finite collection of elements of $A$ where the same element can appear multiple times.
Formally, we define them as maps $M \colon B \to \omega \smallsetminus \{0\}$ where $B \in [A]^{< \omega}$ is called the \textit{universe} of $M$ and for $b \in B$, we call $M(b)$ the \textit{multiplicity} of $b$ (which represents the number of occurrences of $b$ in the multiset $M$).

Denote the set of finite multisets of $A$ by $A^\#$ and let $M \colon B \to \omega \smallsetminus \{0\}$ be one such finite multiset. 
When $B=\{b_1, \dots , b_n\}$ we identify the multiset $M$ with every list $[a,b,c, \dots]$ that only contains elements from $B$ and each $b_i\in B$ occurs exactly $M(b_i)$-times.
When we refer to an \textit{element} $b$ of $M$, in symbols $b \in M$, we are not only referring to the element $b \in B$, but to a specific occurrence of $b$ in $M$.
We define the \textit{length} of $M$ as $l(M)\coloneqq \sum_{b\in B} M(b)$.

A \textit{sub-multiset} of $M \colon B \to \omega \smallsetminus \{0\}$ is any finite multiset $M' \colon B' \to \omega \smallsetminus \{0\}$ of $A$ satisfying $B'\subseteq B$ and $M'(b)\leq M(b)$ for all $b \in B'$.
It is easy to see that in this case, $l(M') \leq l(M)$ always holds.

If $N\colon C \to \omega \smallsetminus \{0\}$ is another finite multiset of $A$, then a \textit{map} $f\colon M \to N$ is an assignment of each element of $M$ to exactly one of $N$.
We denote the \textit{image} of such $f$ by $Im(f)\coloneqq [f(a) \colon a\in M]$, noting that it is clearly a sub-multiset of $N$.
We say that $f$ is:
\benroman
    \item \textit{injective} when $l(M)=l(Im(f))=l([f(a) \colon a\in M])$;

    \item \textit{surjective} when $l(Im(f))=l([f(a) \colon a\in M])=l(N)$, i.e., when $Im(f)=N$.
\eroman


\begin{defi}
    Let $(P,\leq)$ be a poset. We define the \textit{multiset embeddability} relation $\preceq$ on the set $P^\#$ of finite multisets of $P$ by: $N \preceq M$ iff there exists an injective map $f \colon N \hookrightarrow M$ such that $p \leq f(p)$ for all $p \in N$.
\end{defi}

Notice that, if $f \colon N \hookrightarrow M$ is an injective map between finite multisets of a poset $P$, then it might be the case that for distinct $p,q \in N$, the images $f(p)$ and $f(q)$ represent the same element $r$ in $P$, but in $M$ they must represent different occurrences of $r$.   
For example, in $(\omega^\#, \preceq)$ we have $[2,5,2] \preceq [6,3,3,1]$ while $[2,5,2] \npreceq [6,3,1]$.

\begin{Lemma}[Higman's Lemma {\cite[Thm.~4]{Higman}}]\label{prop higman}
    If $P$ is a BPO then so is $(P^\#, \preceq)$.
\end{Lemma}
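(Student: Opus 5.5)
Since Theorem \ref{thm bpo generation}.(iv) already gives that $(Down(P),\subseteq)$ is a BPO whenever $P$ is, the plan is to find a BPO built from $P$ by operations that Theorem \ref{thm bpo generation} preserves, and an order reflecting map from $(P^\#,\preceq)$ into it. A single downset forgets multiplicities, so one such component cannot be enough. Instead I will use the infinite-stability principle from \cite[Prop.~I.3.18]{BPO} mentioned in the sketch of Theorem \ref{thm bpo generation}.(iv). The step that needs real care is that injectivity of $f$ must come out of the comparison.

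First I encode multisets as finite sequences. Every $M\in P^\#$ with $l(M)=k$ is identified with its listings $(m_1,\dots,m_k)\in P^{<\omega}$. The map $f\colon N\hookrightarrow M$ required for $N\preceq M$ is an injection $\sigma$ of positions with $n_i\leq m_{\sigma(i)}$, with no monotonicity demanded. Hence it suffices to show that $P^{<\omega}$ is a BPO under the relation $u\leq^* v$ iff there is an injection $\sigma$ of the indices of $u$ into those of $v$ with $u_i\leq v_{\sigma(i)}$. Indeed, this relation is weaker than the Higman subsequence order $\leq_H$ (where $\sigma$ is also increasing), so the identity on $P^{<\omega}$ is order reflecting from $\leq^*$ to $\leq_H$ in the sense used in Theorem \ref{thm bpo generation}.(ii). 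Likewise, choosing one listing per multiset gives a map $P^\#\to P^{<\omega}$ that is order reflecting from $\preceq$ to $\leq_H$, since $u\leq_H v$ yields an injective, value-increasing map of multisets. By Theorem \ref{thm bpo generation}.(ii), the statement reduces to showing that $(P^{<\omega},\leq_H)$ is a BPO when $P$ is.

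For that I apply the standard infinite-stability result directly. Finite sequences are objects labeled by elements of $P$ (structures of the form $k\to P$ with $k\in\omega$), and the embedding notion of \cite[Prop.~I.3.18]{BPO} for labeled well-ordered index sets is exactly increasing injections that respect labels. This is Nash-Williams' generalization of Higman's theorem.

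If a self-contained argument is wanted, I would run the minimal bad super-sequence argument. Take a bad $f\colon F\to P^{<\omega}$ that is minimal with respect to the length of its values. No value is the empty sequence, so write $f(s)=a_s\frown w_s$. Apply Proposition \ref{prop front ramsey} to the relation ``$a_s\leq a_t$''. If the alternative ``$\neg(a_s\leq a_t)$ whenever $s\lhd t$'' holds, we obtain a bad super-sequence in $P$, a contradiction. Otherwise, on a subfront we have $a_s\leq a_t$ whenever $s\lhd t$. Passing via the shift front $F^2$, the tails $w$ give a bad super-sequence of strictly shorter values, contradicting minimality. This last step is the main obstacle: it needs the locally-minimal-bad lemma for fronts, which is why I would rather cite \cite[Prop.~I.3.18]{BPO}.
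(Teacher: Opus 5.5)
Your proposal is correct, but it takes a different route from the paper. The paper gives no argument and simply invokes the lemma as a known result attributed to Higman.

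You make the reduction explicit. Fixing one listing $L(M)\in P^{<\omega}$ of each finite multiset gives a map $(P^\#,\preceq)\to(P^{<\omega},\le_H)$ that is order reflecting. Indeed, an increasing position-injection witnessing $L(N)\le_H L(M)$ is in particular an injective, value-increasing map $N\hookrightarrow M$. Theorem~\ref{thm bpo generation}.(ii) then reduces the lemma to the statement that finite words over a BPO form a BPO under the subsequence order.

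This isolates the one deep ingredient and shows exactly which theorem is needed: the better-partial-order version of Higman's theorem for words, due to Nash-Williams. Higman's original theorem is stated for well partial orders. The paper's bare citation buys only brevity.

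A few points to tidy up:

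\begin{enumerate}
\item The intermediate relation $\le^*$ is only a quasi-order on $P^{<\omega}$, since different listings of one multiset are $\le^*$-equivalent. So Theorem~\ref{thm bpo generation}.(ii), stated for posets, does not literally apply to it. Just drop it: the listing map straight into $\le_H$ already suffices.

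\item The paper only uses \cite[Prop.~I.3.18]{BPO} to obtain $(Down(P),\subseteq)$. Nothing indicates that it covers labelled well-ordered index sets with increasing embeddings, so cite Nash-Williams' theorem on sequences for your key step instead.

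\item In your self-contained sketch, the detour through $F^2$ is unnecessary. On the subfront $G$ where $a_s\le a_t$ whenever $s\lhd t$, the map $s\mapsto w_s$ is already bad on $G$. This is because $w_s\le_H w_t$ together with $a_s\le a_t$ would give $a_s\frown w_s\le_H a_t\frown w_t$.

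\item For the locally-minimal-bad lemma, rank by the proper-suffix order, which is well founded and compatible with $\le_H$. Do not rank by bare length: a shorter word need not be $\le_H$-below a longer one, so length is not a partial ranking in the required sense.
\end{enumerate}
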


In order to prove the main result of this paper, we will need a stronger version of the multiset embeddability relation:

\begin{defi}\label{def multi proj}
    Let $(P,\leq)$ be a poset. We define the \textit{multiset projectivity relation} relation $<<$ on the set $P^\#$ of finite multisets of $P$ by: $N << M$ iff there exists a surjective map $f \colon M \twoheadrightarrow N$ such that $ f(p) \leq p$ for all $p \in M$.
\end{defi}

We first show that the multiset projectivity relation is a partial order, and subsequently that it is indeed stronger than multiset embeddability relation.

\begin{Lemma}
    If $P$ is a poset then so is $(P^\#, <<)$.
\end{Lemma}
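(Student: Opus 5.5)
We check reflexivity, transitivity and antisymmetry of $<<$ in turn, working directly from Definition \ref{def multi proj}.

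\textbf{Reflexivity.} For $M \in P^\#$, take the identity map $M \to M$, which sends each occurrence to itself. It is surjective and satisfies $p \leq p$, so $M << M$.

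\textbf{Transitivity.} Suppose $N << M$ via $f \colon M \twoheadrightarrow N$ and $K << N$ via $g \colon N \twoheadrightarrow K$. The composite $g\circ f \colon M \to K$ is again an assignment of occurrences to occurrences. It is surjective: every occurrence of $K$ is hit by some occurrence of $N$, and that occurrence is hit by some occurrence of $M$. For each $p \in M$ we have $g(f(p)) \leq f(p) \leq p$, so $K << M$.

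\textbf{Antisymmetry.} This is the main step. Assume $N << M$ via $f$ and $M << N$ via $g$. Since a surjective map between multisets cannot increase length, surjectivity gives $l(N) \leq l(M)$ and $l(M) \leq l(N)$. Hence $l(M)=l(N)$, and $f$ and $g$ are bijections between the finitely many occurrences.

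Consider $h \coloneqq g\circ f$, a bijection of the finite set of occurrences of $M$ with $h(p) \leq f(p) \leq p$. Since $h$ is a permutation of a finite set, some power $h^k$ is the identity. Along the cycle of an occurrence $p$ we get
\[
p = h^k(p) \leq \dots \leq h(p) \leq p,
\]
so antisymmetry of $\leq$ in $P$ forces $h(p)=p$ as elements of $P$. Then $p = g(f(p)) \leq f(p) \leq p$, which gives $f(p)=p$ in $P$ for every occurrence $p$ of $M$.

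Thus $f$ is a bijection of occurrences that preserves the underlying element of $P$. So every $b$ occurs in $M$ and in $N$ the same number of times, and $M=N$ as multisets. The only delicate point is keeping track of occurrences versus underlying elements when passing to the permutation $h$.
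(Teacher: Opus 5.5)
Your proof is correct. Reflexivity, transitivity, and the first step of antisymmetry match the paper; that first step is equal lengths, so $f$ and $g$ are bijections on occurrences. The core of antisymmetry, however, is argued differently.

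The paper inducts on $l(M)$:
\begin{enumerate}
\item it picks a point $p$ minimal in the universe of $M$;
\item it uses $g\circ f(p') \leq f(p') \leq p'$ together with minimality to see that $f$ sends occurrences of $p$ in $M$ to occurrences of $p$ in $N$ (and symmetrically for $g$);
\item it deletes one occurrence of $p$ from each side and applies the induction hypothesis to the resulting $M' << N' << M'$.
\end{enumerate}

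You instead observe in one stroke that $g\circ f$ is a deflationary permutation of the finite set of occurrences of $M$. Following each cycle then forces $g\circ f$, and hence $f$, to preserve underlying elements of $P$.

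Your version is shorter and avoids bookkeeping the paper leaves implicit. After deleting $p'$ from $M$ and $f(p')$ from $N$, the restriction of $g$ to $N'$ may still hit $p'$ when $p$ has multiplicity at least two. It must then be re-routed, e.g.\ to $g(f(p'))$, to remain a deflationary surjection onto $M'$.

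Both arguments rest on the same two ingredients: finiteness and antisymmetry of $\leq$. The paper uses finiteness to obtain a minimal element and proceeds locally, one occurrence at a time. You use finiteness to get a finite-order permutation and conclude globally for all occurrences at once.
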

\begin{proof}
    Given a poset $P$, the $<<$ relation on $P^\#$ is obviously reflexive and transitive.
Let us see why it is also antisymmetric, hence a partial order on $P^\#$.
Let $N,M\in P^\#$ and suppose $N << M <<N$, so there are $f \colon M \twoheadrightarrow N$ such that $ f(p) \leq p$ for all $p \in M$, and $g \colon N \twoheadrightarrow M$ such that $ g(q) \leq q$ for all $q \in N$.

We proceed by induction on the length of $M$, noting that since these are maps between multisets we always have
\[
l(Im(f))\leq l(M) \hspace{.3cm} \text{and }\hspace{.3cm} l(Im(g)) \leq l(N),
\]
while their surjectivity ensures 
\[
l(Im(f))=l(N) \hspace{.3cm} \text{and }\hspace{.3cm} l(Im(g))=l(M),
\]
and thus
\[
l(M)=l(Im(f))=l(Im(g))=l(N).
\]
It follows that both maps are also injective.

The case $l(M)=1$ is an immediate consequence of $\leq$ being anti-symmetric.
We now assume that pairwise $<<$-related multisets of length $n-1$ must be equal and that $l(M)=n$.
Let $U_M$ and $U_N$ be the respective universes of $M$ and $N$.
Since $U_M$ is a finite subset of $P$ by definition, it must contain a point $p$ which is minimal in $U_M$.
Let $p'$ be one of the occurrences of $p$ in $M$.
By the assumptions on $f$ and $g$ we know that $g\circ f(p') \leq f(p') \leq p'$.
But $g \circ f(p') \in U_M$, so the minimality of $p$ in $U_M$ forces $g\circ f(p')$ to be an occurrence of $p$ in $M$.
Using the antisymmetry of $\leq$ we can now infer that $f(p')$ must be an occurrence of $p$ in $N$.
This shows that the image under $f$ of any occurrence of $p$ in $M$ is necessarily mapped to an occurrence of $p$ in $N$.
By a similar argument we can also show that the image under $g$ of any occurrence of $p$ in $N$ is necessarily mapped to an occurrence of $p$ in $M$.

Since $l(M)=l(N)$ and our maps are injective, we can delete exactly one such occurrence of $p$ in $M$ and $N$ and obtain sub-multisets $M'$ and $N'$ of $M$ and $N$ respectively, that satisfy $l(M')=l(N')=n-1$ and $M' << N' << M'$.
Using the induction hypothesis we get $M'=N'$, and by simply adding one occurrence of $p$ to each of these multisets, it is now clear that $M=N$.
\end{proof}

\begin{Lemma}\label{lem stronger rel}
    Let $P$ be a poset and $M,N \in P^\#$. We have $N << M$ iff $N \preceq M$ and for every $p\in M$, there exists $q \in N$ such that $q \leq p$.
\end{Lemma}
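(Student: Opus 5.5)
We prove the two implications separately. Throughout we use the convention, made explicit when maps between multisets were defined, that points of a multiset are specific occurrences.

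\textbf{Left to right.} Suppose $N << M$, witnessed by a surjective map $f\colon M \twoheadrightarrow N$ with $f(p)\leq p$ for all $p\in M$.

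The second condition is immediate: for every $p\in M$, the occurrence $q\coloneqq f(p)\in N$ satisfies $q\leq p$.

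For $N\preceq M$, we use surjectivity. Every occurrence $q\in N$ lies in $Im(f)=N$, so we may choose one occurrence $g(q)\in M$ with $f(g(q))=q$. This defines a map $g\colon N\to M$ on occurrences. Distinct occurrences of $N$ have distinct $f$-images, so their chosen preimages are distinct occurrences in $M$. Hence $g$ is injective in the sense $l(Im(g))=l(N)$. Moreover $q=f(g(q))\leq g(q)$, so $g$ witnesses $N\preceq M$.

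\textbf{Right to left.} Suppose $g\colon N\hookrightarrow M$ is injective with $q\leq g(q)$ for all $q\in N$. Suppose also that every $p\in M$ has some $q_p\in N$ with $q_p\leq p$. We build $f\colon M\to N$ occurrence by occurrence.

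Since $g$ is injective, $Im(g)$ is a sub-multiset of $M$ of length $l(N)$, and each occurrence $p$ in $Im(g)$ is the image of exactly one occurrence $q\in N$. For such $p$ we set $f(p)\coloneqq g^{-1}(p)$, the unique $q$ with $g(q)=p$; then $f(p)=q\leq g(q)=p$. For every remaining occurrence $p$ of $M$, outside $Im(g)$, we set $f(p)\coloneqq q_p$, which satisfies $q_p\leq p$ by hypothesis.

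Then $f(p)\leq p$ for every $p\in M$. Every occurrence of $N$ is hit, namely $q=f(g(q))$, so $Im(f)=N$ and $f$ is surjective. Thus $N<<M$.

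The only delicate point is the bookkeeping of occurrences. We must check that the partial inverse $g^{-1}$ is well defined on the occurrences in $Im(g)$ and that it covers all of $N$. Both follow directly from the definition of injectivity as $l(Im(g))=l(N)$, read as a bijection between the occurrences of $N$ and those of $Im(g)$.
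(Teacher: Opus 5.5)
Your proof is correct and follows the paper's argument essentially step for step. For the forward direction you take a section $g$ of the surjection $f$ (one chosen preimage per occurrence of $N$), and for the converse you define $f$ as the inverse of $g$ on the occurrences in $Im(g)$ and by the witnesses $q_p$ elsewhere, with surjectivity coming from $f(g(q))=q$. One sentence is garbled: ``distinct occurrences of $N$ have distinct $f$-images'' should instead say that $f(g(q))=q$ makes $g$ a section of $f$, hence injective.
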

\begin{proof}
    To prove the left to right implication, suppose that $N << M$, i.e., that there exists a surjective map $f \colon M \twoheadrightarrow N$ such that $f(p) \leq p$ for all $p \in M$.
    Because of this last condition, we only need to prove $N \preceq M$.
    Since $f$ is surjective, for $q \in N$ there exists $p \in M$ satisfying $q=f(p) \leq p$.
    We fix one such $q_p \in M$ and set $g(q) \coloneqq q_p$, noting that $q=f(q_p)\leq q_p =g(q)$.
    This clearly defines an injective map $g\colon N \hookrightarrow M$ that makes $N \preceq M$ hold true.

    Conversely, assume that $N \preceq M$ and for every $p\in M$, there exists $q \in N$ such that $q \leq p$.
    From $N \preceq M$ we obtain an injective map $g\colon N \hookrightarrow M$ satisfying $q \leq g(q)$ for all $q \in N$.
    We now define a map $f \colon M \to N$. 
    If $p\in M$ but $p \notin Im(g)$, we take some $q \in N$ such that $q \leq p$, which exists by our above assumption, and set $f(p)\coloneqq q$, noting that $f(p) = q \leq p$.
    If $p \in Im(g)$, then by the assumptions on $g$ there exists a unique $q\in N$ such that $q \leq g(q)=p$.
    We then set $f(p) \coloneqq q$, noting $f(p)=q\leq g(q)=p$ and that because $g$ is injective, this definition ensures that the map $f$ will be surjective.
    Thus, we do have $N<<M$.
\end{proof}

Finally, we can accomplish the goal of this section:

\begin{Theorem}\label{thm multi proj is bpo}
    If $P$ is a BPO then so is $(P^\#, <<)$.
\end{Theorem}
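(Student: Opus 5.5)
The plan is to realize $<<$ as the intersection of two relations that are each BPOs, and then use an order reflecting map into a product (Theorem \ref{thm bpo generation}.(ii),(iii)). By Lemma \ref{lem stronger rel}, $N << M$ holds iff both $N \preceq M$ and every $p \in M$ lies above some $q \in N$. The second condition says exactly that $M \subseteq \up N$, where $M$ and $N$ are read as subsets of $P$ via their universes. Equivalently, $\up U_M \subseteq \up U_N$, which holds iff $P\smallsetminus \up U_N \subseteq P \smallsetminus \up U_M$. Here $P\smallsetminus\up U$ is a downset of $P$.

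Accordingly, I will define $h \colon P^\# \to (P^\#,\preceq) \times (Down(P),\supseteq)$ by
\[
h(M) \coloneqq (M, \, P \smallsetminus \up U_M).
\]
Order reflection is immediate. If $h(N) \leq h(M)$, then $N \preceq M$ and $P\smallsetminus\up U_N \supseteq P\smallsetminus \up U_M$, so $U_M \subseteq \up U_M \subseteq \up U_N$. Hence every $p\in M$ has some $q\in N$ with $q\le p$, and Lemma \ref{lem stronger rel} gives $N<<M$.

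The first factor is a BPO by Higman's Lemma \ref{prop higman}. The main obstacle is the second factor: we need $(Down(P),\supseteq)$ to be a BPO, whereas Theorem \ref{thm bpo generation}.(iv) gives this only for $\subseteq$. I will avoid reverse inclusion by working with finitely generated upsets under reverse inclusion instead. The comparison $\up U_M \subseteq \up U_N$ holds iff every element of $U_M$ dominates an element of $U_N$, i.e. iff $U_N \leq U_M$ in the domination (Hoare-type) quasi-order on finite subsets, where $A \leq B$ means every $b\in B$ is above some $a\in A$. So I will replace the second coordinate with $[P]^{<\omega}$ under this quasi-order.

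To see that this quasi-order is BPO, I can go back to Higman: $A \leq B$ holds iff there is a (not necessarily injective) map $B\to A$ going down, and this is already implied by a weaker embedding-type condition. More directly, I will give a front-Ramsey argument. Suppose $f\colon F\to [P]^{<\omega}$ is bad. Then for each $s\lhd t$ some $b\in f(t)$ dominates no element of $f(s)$. Choosing such witnesses and applying the standard locally-minimal-bad / sub-super-sequence technique (as in the proof that $([P]^{<\omega},$ domination$)$ is BPO, cf.\ \cite[Prop.~I.3.18]{BPO} on infinite stability) yields a bad super-sequence in $P$.

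If this route stalls, the fallback is to note that the domination order on finite sets is a quotient of the infinite-stability operation applied to $P$. Its BPO-ness then follows from \cite{BPO} exactly as condition (iv) does. Combining the two factors via Theorem \ref{thm bpo generation}.(iii) and reflecting along $h$ via (ii) finishes the proof. The remaining quasi-order versus partial order issue is harmless, since $<<$ is already known to be a partial order and only the absence of bad super-sequences matters.
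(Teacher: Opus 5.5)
Your overall architecture is sound, but the step you single out as the main obstacle comes from a sign error, and the argument you then give for the second factor is not a proof. Complementation reverses inclusion. As you correctly note in your first paragraph, $\up U_M\subseteq\up U_N$ iff $P\smallsetminus\up U_N\subseteq P\smallsetminus\up U_M$, so the downsets $P\smallsetminus\up U$ must be ordered by $\subseteq$. With $\supseteq$, the inequality $h(N)\le h(M)$ yields $\up U_N\subseteq\up U_M$, which is the opposite of what Lemma \ref{lem stronger rel} requires. Order reflection then really fails. In $P=\omega$ take $N=[1]$ and $M=[0,2]$. Then $N\preceq M$ and $\{0\}=P\smallsetminus\up\{1\}\supseteq P\smallsetminus\up\{0,2\}=\emptyset$. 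Yet $N<<M$ fails, because $0$ lies above no element of $N$. Moreover, $(Down(\omega),\supseteq)$ is not even well-founded.

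Your replacement factor, finite subsets with $A\le B$ iff $B\subseteq\up A$, does have the right direction, but you never show it is a BPO. No embedding-type condition such as $\preceq$ implies this domination: the same example has $[1]\preceq[0,2]$ but $0\notin\up\{1\}$. The front-Ramsey sketch and the infinite-stability fallback point at arguments without giving one. So the one non-routine step of your proof is missing.

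The repair is short and turns your idea into a complete proof. Let $h(M)\coloneqq(M,P\smallsetminus\up U_M)$ take values in $(P^\#,\preceq)\times(Down(P),\subseteq)$. By Lemma \ref{lem stronger rel}, $N<<M$ iff $h(N)\le h(M)$, so $h$ is in fact an order embedding, and the target is a genuine poset, so the quasi-order issue disappears. The first factor is a BPO by Lemma \ref{prop higman} and the second by Theorem \ref{thm bpo generation}.(iv); parts (iii) and (ii) of that theorem then conclude. Equivalently, $A\mapsto P\smallsetminus\up A$ order-embeds your domination order on finite subsets into $(Down(P),\subseteq)$.

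The paper uses exactly these two ingredients, but inline. Starting from a super-sequence bad for $<<$, it applies Proposition \ref{prop front ramsey} to $\preceq$ and rules out the bad branch with Higman's Lemma. It then picks witnesses $p_s^t\in f(t)$ lying above no element of $f(s)$, and checks that $s\mapsto\down\{p_s^t\colon s\lhd t\}$ is bad for $\subseteq$ on $Down(P)$. The corrected version of your route is more modular, since the Ramsey step is hidden inside the product theorem. It also gives the slightly stronger fact that $(P^\#,<<)$ embeds into a product of two BPOs.
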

\begin{proof}
    Let $(P, \leq)$ be a BPO and assume, with a view to contradiction, that there exists a super-sequence $f'\colon F \to P^\#$ which is bad for $<<$.
    Consider the multiset embeddability relation $\preceq$ on $P^\#$.
    Since it is a binary relation on $P^\#$, Proposition \ref{prop front ramsey} forces the existence of a sub-super-sequence $f'{\restriction_G}\colon G \to P^\#$ that satisfies exactly one of the following conditions:
    \benroman
        \item $\forall s,t \in G \, \big( s \lhd t \implies f'{\restriction_G}(s) \preceq f'{\restriction_G}(t) \big)$;

        \item $\forall s,t \in G \, \big( s \lhd t \implies  f'{\restriction_G}(s) \npreceq f'{\restriction_G}(t)\big)$.
    \eroman
    Notice that condition (ii) is equivalent to $f'{\restriction_G}\colon G \to P^\#$ being a bad super-sequence for $\preceq$, which implies that $(P^\#, \preceq)$ is not a BPO and thus contradicts Higman's Lemma \ref{prop higman}.
    Hence, we know that condition (i) must hold.
    Combining this with the assumption that $f'$ is bad for $<<$ and setting $f\coloneqq f'{\restriction_G}\colon G \to P^\#$ yields
    \[
    \forall s,t \in G \, \big( s \lhd t \implies f(s) \preceq f(t) \text{ and } \neg (f(s) < < f(t)) \big).
    \]
    Using the equivalence stated in Lemma \ref{lem stronger rel}, we can rewrite the above display as
    \[
    \forall s,t \in G \, \big( s \lhd t \implies f(s) \preceq f(t) \text{ and } \exists p\in f(t), \forall q \in f(s) \, (q \nleq p)  \big),
    \]
    and for each pair of elements $s,t\in G$ satisfying $s \lhd t$, we fix a witness $p_s^t\in f(t)$ such that $q \nleq p_s^t$ for all $q \in f(s)$.

    We finish this proof by showing that setting $h(s) \coloneqq \down \{p_s^t \colon t\in G \text{ and } s \lhd t\}$ for each $s \in G$ defines a super-sequence $h \colon G \to Down(P)$ which is bad for $\subseteq$.
    This will provide us with the desired contradiction because we assumed that $(P,\leq)$ is a BPO, hence Theorem \ref{thm bpo generation}.(iv) ensures that so is $(Down(P),\subseteq)$.
    
    Accordingly, we take $s,t \in G$ such that $s \lhd t$ and prove $h(s) \nsubseteq h(t)$, thus showing that $h$ is bad for $\subseteq$.
    Since $h(s) = \down \{p_s^{t'} \colon t'\in G \text{ and } s \lhd t'\}$ and $s \lhd t$, it is clear that $p_s^t \in h(s)$, so it suffices to show $p_s^t \notin h(t) = \down \{p_t^{r'} \colon r'\in G \text{ and } t \lhd r'\}$.
    We take $r \in G$ satisfying $t \lhd r$ and recall the definition of $p_t^r$ as a point such that $p_t^r \in f(r)$ and $q \nleq p_t^r$ for all $q \in f(t)$.
    Since by the same definition we have $p_s^t\in f(t)$, it now follows that $p_s^t \nleq p_t^r$, thus $p_s^t \notin  \down \{p_t^{r'} \colon r'\in G \text{ and } t \lhd r'\} = h(t)$, as desired.    
\end{proof}

\section{Bi-Gödel algebras and co-trees}\label{Sec bg}


We fix a denumerable set $Prop$ of variables (usually denoted by $p,q,r \dots$).
When $\lang$ is an algebraic language, we denote the set of formulas of $\lang$ built up from $Prop$ by $Fm$ and the corresponding algebra of formulas by $\form$, whose endomorphisms will be called \textit{substitutions}.

A \textit{logic} $\vdash$ (in the language $\lang$) is a \textit{finitary} consequence relation on $Fm$ that is \textit{substitution invariant}, i.e., if $\sigma$ is a substitution and $\Gamma \cup \{\phi\}\subseteq Fm$, then $\Gamma \vdash \phi$ implies $\sigma[\Gamma_0] \vdash \sigma(\phi)$ for some finite $\Gamma_0 \subseteq \Gamma$.

A logic $\vdash$ is said to be \textit{consistent} if $\emptyset \nvdash \bot$. 
Given another logic $\vdash^+$, we call it an \textit{extension} of $\vdash$ when ${\vdash} \subseteq {\vdash^+}$.
If moreover there exists a set of formulas $\Sigma$ closed under substitutions and such that
    \[
    \Gamma \vdash^+ \phi \iff \Gamma \cup \Sigma \vdash \phi
    \]
for all $\Gamma \cup \{\phi\}\subseteq Fm$, we say that $\vdash^+$ is an \textit{axiomatic extension} of $\vdash$ and write ${\vdash^+} = {\vdash} + \Sigma$, or simply ${\vdash} + \psi$ when $\Sigma = \{\sigma(\psi) \colon \sigma \text{ is a substitution}\}$.
We will often use the following standard abbreviations: 
    \begin{itemize}
        \item $\vdash \phi$ when $\emptyset \vdash \phi$;

        \item $\Gamma, \Sigma \vdash \phi$ when $\Gamma \cup \Sigma \vdash \phi$;

        \item and $\Gamma \vdash \Sigma$ when $\Gamma \vdash \phi$ holds for all $\phi \in \Sigma$.
    \end{itemize}

We denote by $\mathbb{H}, \mathbb{S}$, and $\mathbb{P}$ the class operators of closure under homomorphic images, subalgebras, and (direct) products, respectively. 
A variety $\V$ is a class of (similar) algebras closed under $\mathbb{H}, \mathbb{S}$, and $\mathbb{P}$. By Birkhoff's Theorem, varieties coincide with classes of algebras that can be axiomatized by sets of equations (see, e.g., \cite[Thm.\ II.11.9]{Sanka2}). The smallest variety $\mathbb{V}(\mathsf{K})$ containing a class $\mathsf{K}$ of algebras is called the \textit{variety generated by $\mathsf{K}$} and coincides with $\HHH\SSS\PPP(\mathsf{K})$ (see, e.g., \cite[Thm.\ II.9.5]{Sanka2}).

Given an algebra $\A$, we denote its congruence lattice by $Con(\A)$.\ We say that $\A$ is \textit{subdirectly irreducible} (SI for short) if $Con(\A)$ has a second least element, and that $\A$ is \textit{simple} if $Con(\A)$ has exactly two elements.

When $\mathsf{K}$ is class of algebras, we denote by $\mathsf{K}^{<\omega}$, $\mathsf{K}_{SI}$, and $\mathsf{K}_{SI}^{<\omega}$ the classes of finite members of $\mathsf{K}$, SI members of $\mathsf{K}$, and SI members of $\mathsf{K}$ which are finite, respectively. It is a consequence of the Subdirect Decomposition Theorem (see, e.g., \cite[Thm.\ II.8.6]{Sanka2}) that if $\mathsf{K}$ is a variety, then $\mathsf{K} = \mathbb{V}(\mathsf{K}_{SI})$.


A \textit{bi-Heyting algebra} is an algebra $\A = (A, \land , \lor,\to,  \gets, 0,1)$ whose $(\land,\lor,0,1)$-reduct is a bounded distributive lattice and such that the \textit{implication} $\to$ and the \textit{co-implication} $\gets$ operations satisfy the following \textit{residuation laws}: for all $a,b,c \in A$, we have 
    \[
    c\leq a \to b \iff c \land a \leq b \hspace{.3cm} \text{ and } \hspace{.3cm} a \gets b \leq c \iff a \leq b \lor c.
    \]
We will denote the class of bi-Heyting algebras by $\bivar$.
Notably, $\bivar$ can be equationally defined (see, e.g., \cite{Rauszer3}) as the subclass of bounded distributive lattices that validate the equations 
    \begin{multicols}{2}
        \begin{enumerate}
            \item $p \to p \approx 1,$
            \item $p \land (p \to q) \approx p \land q$,
            \item $q\land ( p \to q) \approx q$,
            \item $p \to (q \land r) \approx (p\to q) \land (p \to r)$,
            \item $p \gets p \approx 0$,
            \item $p \lor (q \gets p) \approx p \lor q$,
            \item $q \lor (q \gets p) \approx q$,
            \item $(q\lor r)\gets p \approx (q \gets p) \lor (r \gets p)$.
        \end{enumerate}
    \end{multicols} 
\noindent Using the well-known fact that the class of bounded distributive lattices also admits an equational axiomatization, it is then immediate from Birkhoff's Theorem that $\bivar$ is a variety.

When $\phi$ is a formula (in the language of bi-Heyting algebras) and $\{\A\} \cup \mathsf{K}  \subseteq \bivar$, we write $\A \models \phi$ instead of $\A \models \phi \approx 1$, and write $\mathsf{K} \models \phi$ when $\B \models \phi$ for all $\B \in \mathsf{K}$.

The \textit{bi-intuitionistic propositional calculus} $\cbipc$ is the logic algebraized (in the sense of \cite{BP89}) by the variety $\bivar$ and by the sets $\tau(p)=\{p \approx 1\}$ and $\Delta(p,q)=\{p \leftrightarrow q\}$.
Consistent axiomatic extensions of $\cbipc$ are termed \textit{bi-intermediate logics} and the greatest (w.r.t. inclusion) such logic coincides with the \textit{classical propositional calculus} 
\[\ccpc \coloneqq {\cbipc} + p \lor \neg p,\]
where the co-implication is term-definable since $\ccpc (p \gets q) \leftrightarrow (p \land \neg q)$.
    
Notably, bi-intermediate logics are exactly the logics algebraized by the nontrivial subvarieties $\V \subseteq \bivar$ and by the same sets $\tau$ and $\Delta$.
Given such a variety $\V$, we will denote its bi-intermediate logic by $\vdash_\V$ and conversely, when $\vdash$ is a bi-intermediate logic we denote its variety of bi-Heyting algebras by $\V_\vdash$.
It follows from \cite{BP89} that there is a dual isomorphism between $\Lambda(\vdash)$, the lattice of consistent axiomatic extensions of $\vdash$, and $\Lambda(\V)$, the lattice of nontrivial subvarieties of $\V$.

In this paper, we will only be concerned with a particular class of bi-intermediate logics (and their varieties of bi-Heyting algebras), namely, those which extend the \textit{bi-Gödel-Dummett logic}
\[
\vdash_{\lc} \coloneqq \cbipc + (p \to q) \lor (q \to p).
\]
This logic is algebraized by the variety of \textit{bi-Gödel algebras}
\[
\bg \coloneqq \V_{\vdash_{\lc}} = \{\A \in \bivar \colon \A \models (p \to q) \lor (q \to p) \}.
\]
We list some properties of this variety that will be useful in what follows.

\begin{Theorem}\label{props of bg}
The following conditions hold true:
    \benroman
        \item \cite[Cor.~3.8]{Paper1} The variety $\bg$ is \textit{semi-simple}, i.e., every SI member of $\bg$ is simple.
        \item \cite[Prop.~2.2 \& Cor.~3.8]{Paper1} The variety $\bg$ is \textit{congruence distributive}, i.e., every member of $\bg$ has a distributive congruence lattice.
        \item \cite[Cor.~3.9]{Paper1} Every subalgebra of an SI bi-Gödel algebra is also SI.

        \item\cite[Thm.~4.16]{Paper1} The lattice $\Lambda(\bg)$ of nontrivial subvarieties of bi-Gödel algebras has cardinality $2^{\aleph_0}$.
    \eroman 
\end{Theorem}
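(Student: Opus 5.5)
Items (i)--(iv) are quoted from \cite{Paper1}; I plan to reprove them via bi-Esakia duality, with congruences of $\A$ corresponding to closed subsets of $\A_*$ that are both upsets and downsets. Item (ii) needs no duality, since bi-Heyting algebras have a lattice reduct. The lattice median term
\[
m(x,y,z)=(x\land y)\lor(y\land z)\lor(x\land z)
\]
is a majority term, so $\bg$ is congruence distributive by J\'onsson/Mal'cev.

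For (i), the key fact is that the dual $\A_*$ of a bi-Gödel algebra is a co-forest. This comes from the prelinearity axiom, which forces every principal upset $\up x$ to be a chain. Given a clopen $U\subseteq \A_*$, I claim $\up\down\up U$ is already the smallest up-and-down closed set containing $U$. Indeed, in a co-forest any two points of the same component lie below a common maximal point, so one zigzag step suffices. Algebraically this closure is the clopen set defined by a term such as $\lnot(1\gets(\lnot(1\gets a)))$, or its appropriate variant. Hence the congruence-generating sets are given by a single term, and the variety has equationally definable principal congruences with a bounded iteration. Now suppose $\A$ is SI, with monolith $\theta$, and let $C\neq\emptyset$ be the closed up-down set corresponding to $\theta$. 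If $C$ were a proper subset, I would pick a clopen $U$ meeting $C$ but not covering the complement of $C$. Its closure $\up\down\up U$ is then a clopen up-down set, and it or its complement gives a second proper nontrivial congruence incomparable with, or below, $\theta$, which is a contradiction. Therefore the only proper nontrivial closed up-down set is the whole space, i.e., $\A$ is simple, and its dual is connected in the zigzag sense. The main obstacle is this step: verifying carefully that the one-step closure of a clopen set is clopen, using the co-tree shape and the Priestley topology.

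For (iii), note that by (i) an SI algebra $\A$ is simple, and its dual has no nontrivial clopen up-down sets. A subalgebra $\B\leq \A$ corresponds to a surjective bi-p-morphism $\A_*\twoheadrightarrow \B_*$. The preimage of a clopen up-down set of $\B_*$ is a clopen up-down set of $\A_*$, hence is trivial. Surjectivity then makes the original set trivial, so $\B$ is simple and in particular SI.

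For (iv), I would exhibit an infinite family $\{\X_i\}_{i\in\omega}$ of finite co-trees that is an antichain for the order ``$\X\leq\Y$ iff $\X^*\in\mathbb{V}(\Y^*)$''. Candidates are combs with handles of increasing length, decorated at the bottom so that no one is a bi-p-morphic image of a subframe of another. Using the Jankov formulas $\J(\X_i)$ and the Jankov Lemma \cite[Lem.~4.9]{Paper1}, each $I\subseteq\omega$ yields the variety axiomatized by $\{\J(\X_i)\colon i\in I\}$, and distinct $I$ give distinct varieties. This produces $2^{\aleph_0}$ subvarieties; the matching upper bound is automatic because there are only countably many formulas.
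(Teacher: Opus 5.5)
The paper gives no proof of this theorem: all four items are quoted from \cite{Paper1}. Your (ii) is fine. Your (iii) is also fine, provided you argue with \emph{closed} up-down sets, since those are what correspond to congruences. Preimages of closed up-down sets under the continuous bi-p-morphism $\A_*\twoheadrightarrow\B_*$ are again closed up-down sets, so your argument goes through unchanged.

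\textbf{Gap in (i).} The step meant to produce the contradiction fails as stated. Choosing a clopen $U$ that meets $C$ and merely fails to cover $X\smallsetminus C$ gives no control over its closure $\down\up U$. That closure is the union of all components meeting $U$, and it can be the whole space. For instance, in $X=\{a\}\sqcup\{b'<b\}$ with $C=\{a\}$, the set $U=\{a,b\}$ satisfies your requirements, yet $\down\up U=X$. Then neither the closure nor its complement gives a proper nontrivial congruence.

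The missing idea is to separate whole \emph{components}, not points. Every point of a Priestley space lies below a maximal point, and principal upsets are chains. Hence each $\up x$ has a greatest element $m_x$, and the component of $x$ is the closed set $\down m_x$. Now take $y\in C$ and $x\notin C$.
\begin{enumerate}
\item Since $C$ is an up-down set, $\down m_x$ is disjoint from $C$.
\item So some clopen $U\ni y$ misses $\down m_x$.
\item Then $W\coloneqq\down\up U$ is a clopen up-down set with $y\in W$ and $x\notin W$.
\item Hence $X\smallsetminus W$ is a proper closed up-down set not contained in $C$. This is a nontrivial congruence not containing the monolith $\theta$, which is the contradiction.
\end{enumerate}
By contrast, what you call the main obstacle is immediate. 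In a bi-Esakia space both $\up U$ and $\down U$ are clopen for clopen $U$. This is exactly where the co-implication is used; the Priestley topology alone would not give it.

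\textbf{Gap in (iv).} Your reduction is correct. By J\'onsson's Lemma together with (i) and (iii), $\X^*\in\mathbb{V}(\Y^*)$ iff $\X\leq_p\Y$ for finite co-trees. The Jankov Lemma then turns an infinite $\leq_p$-antichain $\{\X_i\}_{i\in\omega}$ of non-singleton finite co-trees into $2^{\aleph_0}$ distinct nontrivial varieties, axiomatized by $\{\J(\X_i)\colon i\in I\}$ for $I\subseteq\omega$.

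However, the antichain itself is never produced, and it is the entire combinatorial content of \cite[Thm.~4.16]{Paper1}. Your suggested candidates do not work as stated. Combs with handles form a $\leq_p$-\emph{chain}: $\C'_m\twoheadrightarrow\C'_n$ for $n<m$, as recorded in the proof of Theorem \ref{thm depiction}. So the ``decorations'' must be specified explicitly. You must then verify that no surjective bi-p-morphism exists between distinct members. A useful invariant is that a bi-p-morphism cannot increase the length of the longest chain above or below a point.

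Also note that, by Proposition \ref{prop tn is bpo}, any such antichain must contain copies of arbitrarily large combs. So no family with bounded comb size can work, and your decorations must be compatible with the combs growing.
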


Using the bi-Esakia duality, a restricted version of the celebrated Esakia duality \cite{Esakia2}, it can be shown that $\cbigd$ is the bi-intuitionistic logic of \textit{co-trees}, i.e., posets with a greatest element (called the \textit{co-root}) and whose principal upsets are chains.
Since we will only have to handle finite co-trees, we only present here the simpler finite version of the aforementioned duality for bi-Heyting algebras.

The bi-Esakia dual of $\A \in \bivar^{< \omega}$ is the (finite) poset $\A_*\coloneqq (A_*,\subseteq)$ of the prime filters of $\A$ ordered by inclusion.
Conversely, the bi-Heyting dual of a finite poset $\X$ is the (finite) algebra $\mathcal{X}^*\coloneqq (Up(\mathcal{X}),\cap,\cup,\to,\gets,\emptyset, X),$ where the implications are defined, for every $U,V \in Up(\X)$, as
\begin{align*}
U\to V&\coloneqq X\smallsetminus \down (U\smallsetminus V)=\big\{x\in X \colon \up x \cap U \subseteq V\big\},\\
U\gets V&\coloneqq \up(U\smallsetminus V)=\big\{x\in X \colon \down x \cap U \nsubseteq V\big\}.
\end{align*}

\begin{defi} \label{def bi-p-morphism}
Let $\X=(X, \leq)$ and $\Y=(Y,\leq)$ be posets. A map $f\colon X \to Y$ is called a \textit{bi-p-morphism}, denoted by $f\colon \X \to \Y$, if it satisfies the following conditions:
\benbullet
    \item \textbf{\textit{Order preserving:}} $\forall x,z\in X \; \big(x \leq z \implies f(x) \leq f(z)\big)$; \vspace{.1cm}
    
    \item \textbf{\textit{Up:}} $\forall x\in X, \forall y\in Y\; \big(f(x) \leq y \implies \exists z\in \up x\; (f(z)=y)\big)$; \vspace{.1cm}
    
    \item \textbf{\textit{Down:}} $\forall x\in X, \forall y\in Y\; \big(y \leq f(x) \implies\exists z\in \down x\; (f(z)=y)\big)$.
\ebullet
If moreover the map $f$ is surjective, $\Y$ is said to be a \textit{bi-p-morphic image} of $\X$ and we write $f \colon \X \twoheadrightarrow \Y$.
\end{defi}

\begin{Theorem}[Finite bi-Esakia Duality]\label{fin duality}
    The category of finite bi-Heyting algebras and their homomorphisms is dually equivalent to the category of finite posets and bi-p-morphisms. 
    In particular, if $\A$ and $\B$ are finite bi-Heyting algebras, then $\A$ embeds into $\B$ iff $\A_*$ is a bi-p-morphic image of $\B_*$.
\end{Theorem}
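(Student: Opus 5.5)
The plan is to derive the statement from Birkhoff's duality between finite bounded distributive lattices and finite posets, and then check that the extra operations $\to,\gets$ match exactly the extra conditions Up and Down of Definition \ref{def bi-p-morphism}.

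\textbf{Objects.} For a finite bi-Heyting algebra $\A$, the map $a\mapsto\{F\in A_*\colon a\in F\}$ is a bounded lattice isomorphism $\A\cong Up(\A_*)$ (Birkhoff). For a finite poset $\X$, the map $x\mapsto\{U\in Up(\X)\colon x\in U\}$ is an order isomorphism $\X\cong(\X^*)_*$, since in the finite case prime filters of $Up(\X)$ are exactly the principal ones generated at points. In a lattice, $\to$ and $\gets$ are determined by the order via the residuation laws. A finite distributive lattice always admits both operations. Hence any lattice isomorphism between finite bi-Heyting algebras automatically preserves $\to$ and $\gets$. It remains to check that on $Up(\X)$ the residuals are the formulas given for $U\to V$ and $U\gets V$, which is a direct check of the residuation laws. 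So objects correspond up to isomorphism.

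\textbf{Morphisms.} A bounded lattice homomorphism $h\colon\A\to\B$ induces the order-preserving map $h_*:=h^{-1}\colon\B_*\to\A_*$. Conversely, an order-preserving $f\colon\X\to\Y$ induces the bounded lattice homomorphism $f^*:=f^{-1}\colon Up(\Y)\to Up(\X)$. These constructions are functorial and inverse up to the natural isomorphisms above, which is again Birkhoff. The real content is the following claim.

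\emph{Claim.} For order-preserving $f$: $f^{-1}$ preserves $\to$ iff $f$ satisfies Up, and $f^{-1}$ preserves $\gets$ iff $f$ satisfies Down.

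To prove it, I compute both sides:
\[
f^{-1}(U\to V)=\{x\colon \up f(x)\cap U\subseteq V\},\qquad f^{-1}(U)\to f^{-1}(V)=\{x\colon f[\up x]\cap U\subseteq V\}.
\]
Order preservation gives $f[\up x]\subseteq\up f(x)$, and Up gives the reverse inclusion, so Up implies equality. Conversely, suppose $y\geq f(x)$ with $y\notin f[\up x]$. Take $U=\up y$ and $V=\up y\smallsetminus\{y\}$; both are upsets. Then $x\in f^{-1}(U)\to f^{-1}(V)$ but $x\notin f^{-1}(U\to V)$, so $f^{-1}$ does not preserve $\to$. The statement for $\gets$ and Down is the order-dual argument, using the witnesses $U=\up\{y\}$ and $V=\up y\smallsetminus\{y\}$ together with $\down$ in place of $\up$. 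Transporting the claim along the natural isomorphisms shows that bi-Heyting homomorphisms correspond exactly to bi-p-morphisms, which gives the dual equivalence.

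\textbf{The "in particular" part.} I show that $f^{-1}$ is injective iff $f$ is surjective. If $f$ is surjective, then $f[f^{-1}(U)]=U$ for every $U$, so $f^{-1}$ is injective. If some $y\notin f[X]$, then $\up y$ and $\up y\smallsetminus\{y\}$ are distinct upsets with the same preimage, so $f^{-1}$ is not injective. Applying this to $h_*$ under the duality gives: $\A$ embeds into $\B$ iff there is a surjective bi-p-morphism $\B_*\twoheadrightarrow\A_*$.

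The main obstacle is the converse direction of the Claim, namely finding separating upsets when Up or Down fails. The choice $\up y$ versus $\up y\smallsetminus\{y\}$ is what makes it work, and it relies on finiteness (so that every upset is clopen).
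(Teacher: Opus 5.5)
Your proof is correct. Note that the paper does not prove this statement: it quotes it as the finite case of Esakia's bi-Esakia duality. In that setting a finite poset carries the discrete topology, every upset is clopen, and bi-Esakia morphisms reduce to bi-p-morphisms.

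Your route differs in that it avoids the topological machinery entirely. You take Birkhoff's duality for finite bounded distributive lattices as the base. You then observe that $\to$ and $\gets$ are determined by the order, so the unit isomorphisms $\A\cong(\A_*)^*$ are automatically bi-Heyting. Everything else reduces to your Claim that $f^{-1}$ preserves $\to$ (resp.\ $\gets$) iff $f$ satisfies Up (resp.\ Down).

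The witnesses $U=\up y$, $V=\up y\smallsetminus\{y\}$ do work in both cases:
\begin{itemize}
\item For Up, $f^{-1}(U\to V)=f^{-1}(Y\smallsetminus\down y)$ excludes $x$. By contrast, $f^{-1}(U)\to f^{-1}(V)=X\smallsetminus\down f^{-1}(y)$ contains $x$.
\item For Down, $f^{-1}(U\gets V)=f^{-1}(\up y)$ contains $x$. By contrast, $f^{-1}(U)\gets f^{-1}(V)=\up f^{-1}(y)$ does not.
\end{itemize}
Calling the second case the ``order-dual argument'' is a little loose. A literal order-dualization would produce downsets, which are not elements of $Up(\Y)$. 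What you actually do, correctly, is keep the same upsets and replace $\up x$ by $\down x$.

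What your approach buys is a short, elementary and self-contained proof of exactly the fragment the paper uses. This includes an explicit proof of the ``in particular'' clause: $f^{-1}$ is injective iff $f$ is surjective, again witnessed by $\up y$ and $\up y\smallsetminus\{y\}$. The citation-based route simply inherits that clause from the general fact that embeddings dualize to surjections. In exchange, the citation route places the result inside the full duality for arbitrary bi-Heyting algebras, which is not needed for this finite statement.
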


We further restrict this duality to bi-Gödel algebras. 
Recall that a \textit{co-forest} is a (possibly empty) disjoint union of co-trees.

\begin{Theorem}[{\cite[Thms. 3.1 \& 3.6]{Paper1}}] \label{si balgs}  
Let $\A\in \bivar^{< \omega}$. Then $\A$ is a $\balg$ iff $\A_*$ is a finite co-forest. Moreover, $\A$ is an SI $\balg$ iff $\A_*$ is a finite co-tree.
\end{Theorem}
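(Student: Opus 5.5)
The plan is to reduce both claims to order-theoretic properties of $\A_*$, using the finite bi-Esakia duality of Theorem \ref{fin duality}. Since $\A \cong (\A_*)^*$, it suffices to show two things for a finite poset $\X$: the algebra $\X^*$ validates $(p\to q)\lor(q\to p)$ iff $\X$ is a co-forest, and $\X^*$ is SI iff $\X$ is moreover connected, i.e., a co-tree.

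\textbf{Step 1 (prelinearity).} I will compute directly in $\X^*$ with the formula for $\to$. The equation fails at some valuation $U,V\in Up(\X)$ iff there is $x\in X$ with $x\notin U\to V$ and $x\notin V\to U$. This means there are $y,z\in\up x$ with $y\in U\smallsetminus V$ and $z\in V\smallsetminus U$.
\begin{itemize}
\item If $\up x$ is a chain, then $y,z$ are comparable, say $y\leq z$. Since $U$ is an upset, $z\in U$, which is a contradiction.
\item Conversely, if $\up x$ contains incomparable $y,z$, then $U\coloneqq\up y$ and $V\coloneqq\up z$ refute the equation at $x$.
\end{itemize}
Hence $\X^*\in\bg$ iff every principal upset of $\X$ is a chain.

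\textbf{Step 2 (chains of upsets give a co-forest).} In a finite poset whose principal upsets are chains, each $x$ lies below a unique maximal element $m(x)$, namely the top of the finite chain $\up x$. If $x\leq y$, then $m(x)=m(y)$, because $\up y\subseteq\up x$ and both chains share the same top. Therefore $m$ is constant along zigzag paths, so each connected component has a greatest element and is a co-tree. The converse is immediate: in a disjoint union of co-trees, principal upsets stay inside one co-tree and are chains. This proves the first equivalence.

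\textbf{Step 3 (SI iff connected).} This is the step I expect to need the most care, since it requires identifying congruences dually. By Theorem \ref{fin duality}, homomorphic images of $\X^*$ correspond to injective bi-p-morphisms into $\X$, i.e., to subsets $Y\subseteq X$ that are both upsets and downsets. These are exactly the unions of connected components of $\X$. So $Con(\X^*)$ is dually isomorphic to the lattice of such subsets, which is the Boolean lattice $2^k$, where $k$ is the number of components. (Alternatively, one can use that congruences of a bi-Heyting algebra are determined by filters closed under $\neg\sim$, which here correspond to clopen up-and-down sets.)

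A Boolean lattice $2^k$ has a second least element iff $k=1$. Thus $\X^*$ is SI iff $\X$ is nonempty and connected. Combined with Step 2, $\A$ is an SI bi-Gödel algebra iff $\A_*$ is a finite co-tree.
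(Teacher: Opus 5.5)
Your proof is correct. The paper gives no argument of its own here: the statement is imported from \cite{Paper1}, which works with the full bi-Esakia duality, and only the finite instance is recorded. Your route is a direct finite specialization and is more elementary. In Step 2, finiteness of $\up x$ replaces the topological fact that every point of a bi-Esakia space lies below a maximal one. In Step 3, arbitrary unions of components replace closed up-and-down sets.

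One ingredient goes beyond what Theorem \ref{fin duality} literally states, since that theorem only records that embeddings dualize to surjective bi-p-morphisms. This is the description of quotients, and it takes only a few lines to make explicit:
\begin{enumerate}
\item For an up-and-down set $Y\subseteq X$, the restriction $r_Y\colon U\mapsto U\cap Y$ is a surjective bi-Heyting homomorphism onto $(Y,\leq)^*$. This holds because $\to$ and $\gets$ at $x\in Y$ are computed inside $\up x$ and $\down x$, which lie in $Y$.
\item One has $Y\subseteq Y'$ iff $\ker r_{Y'}\subseteq \ker r_Y$. For the nontrivial direction, if a component $C$ lies in $Y\smallsetminus Y'$, then the pair $(C,\emptyset)$ is in $\ker r_{Y'}$ but not in $\ker r_Y$.
\item Every congruence is of this form. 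The dual of a quotient map is an injective bi-p-morphism, hence an order embedding by the Up condition, and its image is an up-and-down set by Lemma \ref{lem bi-p-morphs}.(i).
\end{enumerate}

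A bonus of your decomposition is that Step 3 never uses prelinearity. It therefore shows that a finite bi-Heyting algebra is SI iff it is simple iff its dual is nonempty and connected. Together with Theorem \ref{fin duality} and the fact that bi-p-morphic images of connected posets are connected, this gives the finite cases of Theorem \ref{props of bg}.(i) and (iii) for free. What the general treatment in \cite{Paper1} buys in return is coverage of infinite bi-G\"odel algebras, which the statement at hand does not require.
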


The theory of \textit{subframe formulas} of bi-Gödel algebras was developed in \cite{Paper1, Martins01} (for an overview of these formulas and their use in superintuitionistic and modal logics we refer to \cite{Bezhan2} and \cite{Zakha}, respectively). 
With every finite and SI bi-Gödel algebra $\A$ we associate its subframe formula $\beta(\A)$, and if $\Y$ is a finite co-tree, we set $\beta(\Y) \coloneqq \beta(\Y^*)$.
The formula $\beta(\A)$ fully describes the $(\lor, \gets)$-reduct of $\A$, and its refutation in a bi-Gödel algebra $\B$ amounts to the existence of a $(\lor,\gets)$-embedding of $\A$ into some homomorphic image of $\B$.
Notably, subframe formulas govern the embeddability of finite co-trees into \textit{bi-Esakia co-forests} (i.e., co-forests equipped with a bi-Esakia topology).
The next result formalizes this for the finite case.

Recall that an order embedding is an order invariant map between posets and that we use the notation $\Y \hookrightarrow \X$ (resp. $\Y \not \hookrightarrow \X$) for the existence (resp. non-existence) of an order embedding from a poset $\Y$ into a poset $\X$.

\begin{Lemma}[Subframe Lemma {\cite[Lem. 4.24]{Paper1}}] \label{subframe lemma}
If $\X$ is a finite co-forest and $\Y$ a finite co-tree, then
\[
\X^* \models \beta(\Y) \iff \Y \not \hookrightarrow \X.
\]
\end{Lemma}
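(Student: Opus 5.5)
The Subframe Lemma is cited from \cite[Lem.~4.24]{Paper1}, but I would reprove it directly in the finite setting using Theorem \ref{fin duality}, Theorem \ref{si balgs} and the description of $\beta(\Y)$ as encoding the $(\lor,\gets)$-reduct of $\Y^*$. The refutation criterion says that $\X^*\not\models\beta(\Y)$ iff some homomorphic image $\B$ of $\X^*$ admits a $(\lor,\gets)$-embedding of $\Y^*$. So the proof splits into two translations: first of homomorphic images, then of $(\lor,\gets)$-embeddings, both into order-theoretic terms on $\X$.

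\emph{Homomorphic images.} By finite duality, homomorphic images of $\X^*$ correspond dually to sub-posets $\X'\subseteq\X$ onto which the inclusion is a bi-p-morphism. Because bi-p-morphisms must satisfy both the Up and Down conditions, these are exactly the unions of connected components of $\X$. Since $\X$ is a co-forest, its components are co-trees, so every homomorphic image of $\X^*$ is $(\X')^*$ for $\X'$ a union of co-trees of $\X$.

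\emph{$(\lor,\gets)$-embeddings.} Next I would show that a $(\lor,\gets)$-embedding $h\colon\Y^*\to(\X')^*$ exists iff $\Y$ order embeds into $\X'$.
\begin{itemize}
\item[($\Leftarrow$)] Given an order embedding $e\colon\Y\hookrightarrow\X'$, define $h(U)\coloneqq\up e[U]$. This preserves $\lor$, and it preserves $\gets$ because $\gets$ is computed as $\up(U\smallsetminus V)$, which is determined by the minimal points. Injectivity follows from order reflection of $e$.
\item[($\Rightarrow$)] Given $h$, use join-irreducibles. The join-prime upsets of $\Y^*$ are the principal upsets $\up y$, and $\gets$ lets one recover these as $\up y=\up y\gets(\up y\smallsetminus\{y\})$-type terms. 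Then assign to each $y$ a minimal point of $h(\up y)\smallsetminus h(\up y\smallsetminus\{y\})$, and verify that this assignment is an order embedding.
\end{itemize}

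Combining the two translations gives
\[
\X^*\not\models\beta(\Y)\iff\Y\hookrightarrow\X'\text{ for some union of components }\X'\iff\Y\hookrightarrow\X,
\]
where the last equivalence holds because $\Y$, being a co-tree, is connected and therefore lands in a single component. This is the contrapositive of the statement.

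The main obstacle is the ($\Rightarrow$) direction of the $(\lor,\gets)$-embedding step. The chosen points must be shown to preserve and reflect the order, which needs the co-tree structure, namely that principal upsets are chains. This is precisely the part that the paper delegates to \cite{Paper1}, and I would follow that reference's argument at this step.
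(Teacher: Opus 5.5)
\paragraph{Verdict: there is a genuine gap in the ($\Rightarrow$) direction.} Two parts of your proposal are correct. First, the description of homomorphic images of $\X^*$ as unions of connected components. Second, the ($\Leftarrow$) direction: $U\mapsto \up e[U]$ preserves $\lor$ and $\gets$, since order reflection gives $\up(\up e[U]\smallsetminus \up e[V])=\up e[U\smallsetminus V]$, and this map is injective.

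The gap is the ($\Rightarrow$) direction, which is the entire content of the lemma. The paper itself only cites \cite{Paper1} here, so deferring to that reference leaves you with no argument. Moreover, the construction you sketch fails as stated. Suppose you choose, independently for each $y$, a minimal point $e(y)$ of $h(\up y)\smallsetminus h(\up y\smallsetminus\{y\})$. This does give an order-reflecting map, because such a point $x$ satisfies $x\in h(W)\iff y\in W$ for all upsets $W$ of $\Y$. It does not give an order-preserving map, even when both posets are co-trees.

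Here is a counterexample. Take $\Y=\{y<y'\}$ and the co-tree $\X'$ on $\{a,b,c,r\}$ with $b<c<r$ and $a<r$. Set $h(\emptyset)=\emptyset$, $h(\{y'\})=\{c,r\}$ and $h(\{y,y'\})=\{a,b,c,r\}$. This is a $(\lor,\gets)$-embedding; the only nontrivial check is $h(\{y,y'\})\gets h(\{y'\})=\up\{a,b\}=h(\{y,y'\})$. Now $a$ is a minimal point of $h(\up y)\smallsetminus h(\up y\smallsetminus\{y\})=\{a,b\}$, the only possible choice for $y'$ is $c$, and $a\nleq c$.

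\paragraph{The missing idea: a coherent top-down choice.} Say that $x$ \emph{represents} $y$ if $x\in h(W)\iff y\in W$ for every upset $W$ of $\Y$.
\begin{enumerate}
\item Every point of $h(\{y_0\})\neq\emptyset$ represents the co-root $y_0$. Note that $h(\emptyset)=h(\emptyset\gets\emptyset)=\emptyset$.
\item Applying $h$ to the identity $\up y=\up y\gets(Y\smallsetminus\down y)$ gives
\[
h(\up y)=\up\big(h(\up y)\smallsetminus h(Y\smallsetminus\down y)\big).
\]
So below every point of $h(\up y)$ there is a point of $h(\up y)\smallsetminus h(Y\smallsetminus\down y)$. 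In particular this holds below every representative of any $y'\geq y$.
\item Such a point represents $y$, because an upset not containing $y$ is contained in $Y\smallsetminus\down y$.
\end{enumerate}
Since $\Y$ is a co-tree, each $y\neq y_0$ has a unique immediate successor $y^+$. So you can fix $e(y_0)$ and then recursively choose a representative $e(y)\leq e(y^+)$. The resulting $e$ is order preserving, by walking up the chain $\up y$. It is order reflecting, because $e(y)\leq e(y')$ puts $e(y')$ in the upset $h(\up y)$, whence $y'\in\up y$.

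This recursion is exactly where the co-tree hypothesis on $\Y$ is used: a point with two immediate successors would need a single representative below both, which need not exist. In the example above, the recursion picks $e(y')=c$ and then $e(y)=b$. The embedding step itself does not use that $\X'$ is a co-forest.
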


\section{Counting the locally finite varieties of bi-Gödel algebras}\label{Sec counting}

A variety $\V$ is \textit{locally finite} when its finitely generated members are finite.
If $\vdash$ is the logic algebraized by $\V$, then $\V$ is locally finite iff $\vdash$ is \textit{locally tabular}, i.e., in $\vdash$, there are only finitely many non-equivalent formulas in each finite number of propositional variables.

In the setting of \balg s, the notion of local finiteness is intrinsically connected with a particular family of co-trees, called the \textit{finite combs}: for each positive integer $n$, we define the $n$-\textit{comb} $\C_n\coloneqq (C_n,\leq)$ as the finite co-tree depicted in Figure \ref{Fig:finite-combs2}.

\begin{figure}[h]
\begin{tikzpicture}
    \tikzstyle{point} = [shape=circle, thick, draw=black, fill=black , scale=0.35]
    \node [label=right:{$c_1'$}] (1') at (1,0) [point] {};
    \node [label=left:{$c_1$}] (1) at (0.5,0.5) [point] {};
    \node [label=right:{$c_2'$}] (2') at (1.5,.5) [point] {};
    \node [label=left:{$c_2$}] (2) at (1,1) [point] {};
    \node [label=above:{$c_n$}] (n) at (1.75,1.75) [point] {};
    \node [label=right:{$c_n'$}] (n') at (2.25,1.25) [point] {};
    
    \draw (1)--(2);
    \draw (1')--(1);
    \draw (2')--(2);
    \draw (n')--(n);
    \draw [dotted] (2)--(n);
\end{tikzpicture}
\caption{The $n$-comb $\C_n$.}
\label{Fig:finite-combs2}
\end{figure}

In \cite[Thm.~5.1]{Paper1} (see Theorem \ref{big crit}), many statements related to the finite combs are shown to be equivalent to the local finiteness of an arbitrary variety of bi-Gödel algebras.
For our purposes, the following equivalence will suffice:

\begin{Theorem}\label{thm lf}
   A variety $\V$ of bi-Gödel algebras is locally finite iff $\V \models \beta(\C_n)$ for some $n \in \mathbb{Z}^+$.
\end{Theorem}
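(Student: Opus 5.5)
Theorem \ref{thm lf} is simply the equivalence (v)$\iff$(iv) of Theorem \ref{big crit} (\cite[Cor.~5.31]{Paper1}), read through the algebraizability of $\cbigd$. The plan is therefore to translate between the logical and the algebraic formulations, so that the statement follows directly from that criterion.

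\textbf{Translation.} Let $\V \subseteq \bg$ be a variety. If $\V$ is trivial, it is locally finite and validates every formula, so both sides hold. Otherwise, let $\vdash_\V \in \Lambda(\bigd)$ be its bi-intermediate logic, so that $\V = \V_{\vdash_\V}$. The dual isomorphism given by \cite{BP89}, with $\tau(p)=\{p\approx 1\}$, yields
\[
\vdash_\V \phi \iff \V \models \phi \approx 1 \iff \V \models \phi
\]
for every formula $\phi$. Applied to $\phi = \beta(\C_n)$, this gives $\vdash_\V \beta(\C_n)$ iff $\V \models \beta(\C_n)$.

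\textbf{Conclusion.} Theorem \ref{big crit}, applied to $\vdash_\V$, states that $\V_{\vdash_\V} = \V$ is locally finite (condition (v)) iff $\vdash_\V \beta(\C_n)$ for some $n\in\ZZ^+$ (condition (iv)). Combining this with the translation step gives the claim.

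The only step requiring any care is the correspondence between theorems of the logic and equations valid in $\V$. This is standard for algebraizable logics whose algebraic semantics has $\tau(p)=\{p\approx1\}$, so I do not expect a real obstacle. The remaining point is the trivial variety, which sits outside $\Lambda(\bg)$ and is handled separately above.
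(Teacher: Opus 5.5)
Your proposal is correct and matches the paper. The paper likewise takes this statement directly from the criterion of Theorem \ref{big crit} (the equivalence of (iv) and (v)), and your translation through algebraizability with $\tau(p)=\{p\approx 1\}$, together with the separate treatment of the trivial variety, only spells out what the paper leaves implicit.
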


It follows that the locally finite varieties of \balg s are exactly those contained in the varieties of the form 
\[
\V_n \coloneqq \{\A \in \bg \colon \A \models \beta(\C_n) \},
\]
for $n \in \mathbb{Z}^+$.
Therefore, the number of locally finite subvarieties of $\bg$ is equal to
\[
|\bigcup_{n \in \omega} \Lambda(\V_n)|,
\]
where $|\Lambda(\V_n)|$ denotes the cardinality of $\Lambda(\V_n)$, the lattice of nontrivial subvarieties of $\V_n$.
We will show that the cardinality in the previous display is equal to $\aleph_0$.
That $\aleph_0 \leq |\bigcup_{n \in \omega} \Lambda(\V_n)|$ follows readily from the simple observation (stated in Lemma \ref{lem antichains equivalence}, but a glance at Figure \ref{Fig:finite-combs2} should be convincing enough) that given positive integers $n<m$, then $\C_n \hookrightarrow \C_m$ but $\C_m \not \hookrightarrow \C_n$, and therefore $\V_n \subsetneq \V_m$.

To prove the reverse inequality, it suffices to show that every $\Lambda(\V_n)$ is at most countable, as this implies
\[
|\bigcup_{n \in \mathbb{Z}^+} \Lambda(\V_n)| \leq \sum_{n \in \mathbb{Z}^+} |\Lambda(\V_n)| \leq \sum_{n \in \mathbb{Z}^+} \aleph_0 = \aleph_0.
\]
We achieve this by relying on the following result concerning \textit{Specht} varieties (i.e., varieties $\V$ whose subvarieties are all finitely axiomatizable):

\begin{Theorem}[{\cite[Thm.~6.21]{CITKIN_2020}}]\label{specht}
    If $\V$ is a locally finite, finitely axiomatizable, and congruence distributive variety, then the following conditions are equivalent:
    \benroman
        \item $\V$ is a Specht variety;
        \item $|\Lambda(\V)|\leq \aleph_0$;
        \item the poset $(\V_{SI}^{< \omega}, \HHH \SSS)$ has no infinite antichains;
        \item the lattice $\Lambda(\V)$ has no infinite descending chains.
    \eroman 
\end{Theorem}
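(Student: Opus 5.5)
The plan is to reduce everything to the combinatorics of the poset $P \coloneqq (\V_{SI}^{<\omega}/{\cong}, \le)$, where $\A \le \B$ iff $\A \in \HHH\SSS(\B)$. Since the algebras are finite, this relation is a partial order on isomorphism classes, and it is trivially well-founded: a proper member of $\HHH\SSS(\B)$ has fewer elements than $\B$.

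\textbf{Step 1: subvarieties correspond to downsets.} The first step is to show that $\W \mapsto \W_{SI}^{<\omega}$ is a lattice isomorphism from the lattice of subvarieties of $\V$ onto $(Down(P),\subseteq)$.
\begin{itemize}
\item The image is a downset, because varieties are closed under $\HHH\SSS$.
\item The map is injective, since a locally finite variety is generated by its finite SI members.
\item For surjectivity, given a downset $D$, I will show that the finite SI members of $\mathbb{V}(D)$ lie in $D$. I would use Jónsson's Lemma, which applies by congruence distributivity: a finite SI $\A \in \mathbb{V}(D)$ lies in $\HHH\SSS\PPP_u(D)$. Because $\V$ is locally finite, the property ``$\A \in \HHH\SSS(\B)$'' for a fixed finite $\A$ is expressible by a first-order sentence about $\B$: it says there is a subalgebra generated by $|\A|$ elements, of bounded size, mapping onto $\A$. Łoś's theorem then transfers this property from the ultraproduct down to some factor in $D$. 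So $\A \in \HHH\SSS(D) \subseteq D$.
\end{itemize}
Nontriviality only removes the bottom element, which is harmless. With this correspondence, (iii)$\Leftrightarrow$(iv) is immediate from Proposition~\ref{prop wpos}(ii). Since $P$ is well-founded, $P$ has no infinite antichain iff $P$ is a WPO iff $(Down(P),\subseteq)$ has no infinite descending chain.

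\textbf{Step 2: (i)$\Rightarrow$(ii)$\Rightarrow$(iii).} For (i)$\Rightarrow$(ii), in a countable language there are only countably many finite sets of equations, so a Specht variety has at most $\aleph_0$ subvarieties. For (ii)$\Rightarrow$(iii), suppose $\{\A_i\}_{i\in\omega}$ is an infinite antichain in $P$. Then the downsets $\down\{\A_i \colon i \in S\}$ for $S \subseteq \omega$ are pairwise distinct. By Step 1 they yield $2^{\aleph_0}$ subvarieties, contradicting (ii).

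\textbf{Step 3: (iii)$\Rightarrow$(i), the main obstacle.} The core is a Jankov/splitting-type fact: for each finite SI $\A\in\V$, the class
\[
\W_\A \coloneqq \{\B \in \V \colon \A \notin \HHH\SSS(\B)\}
\]
is a finitely axiomatizable subvariety of $\V$.
\begin{itemize}
\item Under Step 1, $\W_\A$ corresponds to the downset $P\smallsetminus \up\A$. I will show that $\W_\A = \V \cap \mathrm{Mod}(E)$, where $E$ is the set of all $|\A|$-variable equations valid in $\mathbb{V}(P\smallsetminus\up\A)$. By local finiteness, $E$ is finite up to equivalence.
\item If $\A \in \HHH\SSS(\B)$ and $\B \models E$, then $\A \models E$. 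Since $\A$ is generated by $|\A|$ elements, $\A$ would be a quotient of the free algebra of $\mathbb{V}(P\smallsetminus\up\A)$ on $|\A|$ generators. Hence $\A$ would lie in that variety, and by Step 1 in $P\smallsetminus\up\A$, which is absurd.
\item Conversely, if $\A\notin\HHH\SSS(\B)$, then no finite SI member of $\mathbb{V}(\B)$ lies above $\A$. This again uses Jónsson's Lemma together with the first-order transfer from Step 1. So $\B\in \mathbb{V}(P\smallsetminus\up\A)$ and $\B\models E$.
\end{itemize}
Now let $\W$ be any subvariety. Its complement $P\smallsetminus \W_{SI}^{<\omega}$ is an upset. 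Under (iii), and since $P$ is well-founded, this upset is generated by its finitely many minimal elements $\A_1,\dots,\A_k$; there are finitely many because the minimal elements form an antichain. Therefore $\W = \W_{\A_1}\cap\dots\cap\W_{\A_k}$, which is finitely axiomatizable relative to the finitely axiomatizable $\V$.

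The delicate point throughout is the first-order transfer of ``$\A\in\HHH\SSS(\B)$'' through ultraproducts. This is exactly where local finiteness of $\V$ is used: it bounds the size of the $|\A|$-generated subalgebras of $\B$, so the property can be written as a single first-order sentence.
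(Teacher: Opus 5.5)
The paper gives no proof of this statement. It imports it verbatim from Citkin and only checks the hypotheses for the $\V_n$, so there is no internal argument to match. Your proposal is a correct, self-contained proof along the standard splitting route.

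\emph{Step 1: subvarieties as downsets.} Subvarieties of $\V$ correspond to downsets of $P$, via J\'onsson's Lemma plus the ultraproduct transfer. This gives (iii)$\Leftrightarrow$(iv) through Proposition \ref{prop wpos}(ii). It also gives (ii)$\Rightarrow$(iii), via the $2^{\aleph_0}$ downsets generated by subsets of an infinite antichain.

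\emph{Step 3: splitting complements.} Your class $\{\B\in\V\colon \A\notin\HHH\SSS(\B)\}$ is axiomatized relative to $\V$ by the $|\A|$-variable identities of $\mathbb{V}(P\smallsetminus\up\A)$, which are finitely many up to $\V$-equivalence. This class is a splitting complement, and your $E$ plays the role of a characteristic (Jankov-style) identity for $\A$. So (iii)$\Rightarrow$(i) becomes a finite intersection of such complements.

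\emph{Comparison with citing.} Citing the result is economical. Your route instead shows exactly where each hypothesis enters:
\begin{itemize}
\item congruence distributivity only through J\'onsson's Lemma;
\item local finiteness for the first-order transfer, for generation by finite SI members, and for the finiteness of $E$;
\item finite axiomatizability of $\V$ only in the last line.
\end{itemize}
Step 1 is also precisely the correspondence between subvarieties and downsets (equivalently antichains) that the paper uses without proof in Theorem \ref{thm depiction}.

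Two points should be tightened.

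First, write the transfer sentence explicitly. Fix $n$-variable terms $t_1,\dots,t_N$ representing the elements of the finite free algebra $\mathbf{F}_{\V}(n)$, and fix a generating tuple $\bar a$ of $\A$. Then for $\B\in\V$ one has $\A\in\HHH\SSS(\B)$ iff $\B\models\exists\bar x\bigwedge\{t_j(\bar x)\neq t_k(\bar x)\colon t_j(\bar a)\neq t_k(\bar a)\}$.
\begin{itemize}
\item Well-definedness of $t_j(\bar b)\mapsto t_j(\bar a)$ already yields a homomorphism, because the $\V$-identities $f(t_{j_1},\dots,t_{j_r})\approx t_l$ hold in both algebras.
\item Surjectivity is automatic, since $\bar a$ generates $\A$.
\item The equivalence is valid only inside $\V$. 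This suffices, because ultraproducts of members of $\V$ stay in $\V$, and \L o\'s's theorem then applies.
\end{itemize}

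Second, (i)$\Rightarrow$(ii) needs a countable signature, which the statement leaves implicit. This is harmless here, since the bi-Heyting signature is finite. In general, symbols not occurring in a finite axiomatization can be interpreted freely, so infinitely many of them would contradict local finiteness of a nontrivial $\V$.
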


Some comments are in order.
Firstly, to see that the varieties $\V_n$ actually fall under the conditions of the above statement, recall that they are: locally finite by Theorem \ref{thm lf};  finitely axiomatizable because  so is $\cbipc$ and we have
\[
\vdash_{\V_n} = \cbipc + (p \to q) \lor (q \to p) + \beta(\C_n);
\]
and congruence distributive because, by Proposition \ref{props of bg}.(ii), so is the variety $\bg$.

Secondly, we need to justify why the relation $\HHH \SSS$ is a partial order on $(\V_n)_{SI}^{ < \omega}$, the set of SI members of $\V_n$ which are finite.
Let $\A$ and $\B$ be two such members and recall that
\[
\A \HHH \SSS \B \iff \A \in \HHH \SSS \{\B\},
\]
that is, $\A$ is a homomorphic image of a subalgebra of $\B$.
Since Proposition \ref{props of bg}.(iii) ensures that any subalgebra of the SI bi-Gödel algebra $\B$ will also be SI, and condition (i) of the same result tells us that SI \balg s are always simple (hence have no nontrivial homomorphic images), we conclude that $\A \HHH \SSS \B$ iff $\A$ embeds into $\B$.
It is now immediate that $\HHH \SSS$ is both reflexive and transitive, while antisymmetry comes from noting that the algebras in $(\V_n)_{SI}^{ < \omega}$ are all finite.
Moreover, using Theorem \ref{fin duality} we know that $\A$ embeds into $\B$ iff $\A_*$ is a bi-p-morphic image of $\B_*$, a relation that henceforth is denoted by $\A_* \leq_p \B_*$ and called the \textit{bi-p-morphic image relation}.

Summarizing, we have established that for $\A, \B \in (\V_n)_{SI}^{ < \omega}$, the following equivalences hold:
\[
\A \HHH \SSS \B \iff \A \text{ embeds into } \B \iff \A_* \text{ is a bi-p-morphic image of } \B_* \iff \A_* \leq_p \B_*. 
\]

As previously stated, our goal is to show that every variety   
\[
\V_n = \{\A \in \bg \colon \A \models \beta(\C_n) \}
\]
is Specht.
We do so by showing that $(\V_n)_{SI}^{ < \omega}$ has no infinite antichains w.r.t. the $\HHH \SSS$ relation, which suffices because of Theorem \ref{specht}.
By the equivalences displayed above, we can achieve this by proving that the set of bi-Esakia duals of members of $(\V_n)_{SI}^{ < \omega}$ has no infinite antichains w.r.t. to the bi-p-morphic image relation $\leq_p$.
And since Theorem \ref{si balgs} and Lemma \ref{subframe lemma} ensure that
\begin{align*}
    \A \in (\V_n)_{SI}^{ < \omega} & \iff \A_* \text{ is a finite co-tree s.t. } \A \models \beta(\C_n) \\
    & \iff \A_* \text{ is a finite co-tree s.t. } \C_n \not \hookrightarrow \A_*\\
    & \iff \A_* \text{ is a finite co-tree which does not admit $\C_n$ as a subposet,}
\end{align*}
the aforementioned set of duals of $(\V_n)_{SI}^{ < \omega}$ can be described as
\[
\T_n \coloneqq \{\X \colon \X \text{ is a finite co-tree s.t. } \C_n \not \hookrightarrow \X \}.
\]
We recall that we identify order isomorphic finite co-trees, and note that by the previous discussion, the next result is now immediate.

\begin{Lemma}\label{lem antichains equivalence}
    Let $n$ and $m$ be positive integers such that $n < m$. 
    \benroman
        \item The poset $\big((\V_n)_{SI}^{< \omega}, \HHH \SSS\big)$ is order isomorphic to $(\T_n, \leq_p)$.

        \item The poset $\big((\V_n)_{SI}^{< \omega}, \HHH \SSS\big)$ has no infinite antichains iff the poset $(\T_n, \leq_p)$ has no infinite antichains.

        \item $\T_n \subsetneq \T_m$ and $\V_n \subsetneq \V_m$.
    \eroman
\end{Lemma}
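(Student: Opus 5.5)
For (i), the map $\A \mapsto \A_*$ restricted to $(\V_n)_{SI}^{<\omega}$ (algebras taken up to isomorphism, co-trees up to order isomorphism) is the candidate isomorphism. By Theorem \ref{si balgs}, $\A_*$ is a finite co-tree whenever $\A$ is a finite SI bi-Gödel algebra. By the Subframe Lemma \ref{subframe lemma}, $\A\models\beta(\C_n)$ iff $\C_n\not\hookrightarrow\A_*$. Together these show that the map lands in $\T_n$. Surjectivity comes from the finite duality of Theorem \ref{fin duality}: for $\X\in\T_n$ the algebra $\X^*$ is a finite bi-Heyting algebra with $(\X^*)_*\cong\X$, so it is an SI bi-Gödel algebra by Theorem \ref{si balgs}, and it lies in $\V_n$ by the Subframe Lemma. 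Injectivity also follows from the duality, since $\A \cong (\A_*)^*$. Order invariance is the chain of equivalences already displayed before the statement: $\A\,\HHH\SSS\,\B$ iff $\A$ embeds into $\B$ iff $\A_*\leq_p\B_*$. That chain used semisimplicity and the closure of SI algebras under subalgebras from Theorem \ref{props of bg}, together with Theorem \ref{fin duality}.

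Part (ii) is immediate from (i), because order isomorphisms preserve and reflect incomparability, and hence carry infinite antichains to infinite antichains in both directions.

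For (iii), I first check that $\C_n\hookrightarrow\C_m$ via $c_i\mapsto c_i$ and $c_i'\mapsto c_i'$. Indeed, the $n$-comb is the subposet on $\{c_1,\dots,c_n,c_1',\dots,c_n'\}$ of $\C_m$, and the induced order agrees because $c_i\le c_j$ iff $i\le j$, $c_i'\le c_j$ iff $i\le j$, and the $c_i'$ are minimal and pairwise incomparable. It follows that if $\C_m\hookrightarrow\X$ then $\C_n\hookrightarrow\X$ by composition, which gives $\T_n\subseteq\T_m$. For strictness, $\C_n$ itself witnesses $\C_n\in\T_m\smallsetminus\T_n$: trivially $\C_n\hookrightarrow\C_n$, and $\C_m\not\hookrightarrow\C_n$ because order embeddings are injective and $|C_m|=2m>2n=|C_n|$.

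For the varieties, recall that $\A\in\V_n$ iff $\A\models\beta(\C_n)$, so what is needed is $\beta(\C_n)\vdash_{\bigd}\beta(\C_m)$, which I expect to be the only step needing care. I would argue semantically. Since $\V_n$ is locally finite (Theorem \ref{thm lf}), it is generated by its finite SI members, which by (i) are exactly the duals $\X^*$ with $\X\in\T_n$. Every such $\X$ also lies in $\T_m$, so $\X^*\models\beta(\C_m)$ by the Subframe Lemma. Hence every generator of $\V_n$ lies in the variety $\V_m$, and therefore $\V_n\subseteq\V_m$. Strictness follows because $\C_n^*\in\V_m\smallsetminus\V_n$, by the Subframe Lemma applied to $\C_n\in\T_m\smallsetminus\T_n$.
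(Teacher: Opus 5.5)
Your proposal is correct and follows the paper. The paper treats the lemma as immediate from the preceding discussion: (i) and (ii) come from the displayed chain $\A\,\HHH\SSS\,\B \iff \A_*\leq_p\B_*$ together with the identification of the duals of $(\V_n)_{SI}^{<\omega}$ with $\T_n$, and (iii) comes from $\C_n\hookrightarrow\C_m$ and $\C_m\not\hookrightarrow\C_n$.

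The only variation is in $\V_n\subseteq\V_m$. The paper reads this inclusion directly off the description $\V_n=\{\A\in\bg\colon \C_n\not\hookrightarrow\A_*\}$ for arbitrary $\A$. You instead go through local finiteness and generation by finite SI members, so that only the finite Subframe Lemma is needed. That is a valid and slightly more self-contained justification.
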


The remainder of this section is dedicated to the proof of Proposition \ref{prop tn is bpo}, which states that every $(\T_n, \leq_p)$ is a BPO.
Since BPOs cannot have infinite antichains, using the previous lemma together with Proposition \ref{specht}, we will not only obtain that
\[
\aleph_0 \leq |\bigcup_{n \in \mathbb{Z}^+} \Lambda(\V_n)| \leq \sum_{n \in \mathbb{Z}^+} |\Lambda(\V_n)| \leq \sum_{n \in \mathbb{Z}^+} \aleph_0 = \aleph_0,
\]
but also that every subvariety of every $\V_n$ is finitely axiomatizable, thus establishing the main result of this paper:

\begin{Theorem}\label{main thm}
    There are only $\aleph_0$ locally finite varieties of bi-Gödel algebras, all of which are finitely axiomatizable.
\end{Theorem}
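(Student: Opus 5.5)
The plan follows the strategy announced in the introduction: reduce Theorem \ref{main thm} to showing that each $(\T_n,\leq_p)$ is a BPO, and prove that by induction on $n$ using Theorem \ref{thm multi proj is bpo}.

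\textbf{Reduction.} By Theorem \ref{thm lf}, every locally finite variety of bi-Gödel algebras lies in some $\V_n$. Each $\V_n$ is locally finite, finitely axiomatizable and congruence distributive, so Theorem \ref{specht} applies to it. By Lemma \ref{lem antichains equivalence}(ii), condition (iii) of Theorem \ref{specht} for $\V_n$ is equivalent to $(\T_n,\leq_p)$ having no infinite antichain. Once this is established, each $\V_n$ is Specht and $|\Lambda(\V_n)|\leq\aleph_0$. Summing over $n$ gives countably many locally finite varieties, and the chain $\V_1\subsetneq\V_2\subsetneq\cdots$ from Lemma \ref{lem antichains equivalence}(iii) gives at least $\aleph_0$. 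Every locally finite variety is a subvariety of a Specht variety, hence finitely axiomatizable. So everything rests on showing that every $(\T_n,\leq_p)$ is a BPO, since BPOs are WPOs.

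\textbf{Base cases.} $\T_1$ is a singleton. For $\T_2$, a finite co-tree omitting $\C_2$ cannot have two distinct non-leaf points that each carry a side leaf. Hence it should be a chain of length $a$ with a fan of $b$ leaves attached at its bottom (the case $b=0$ is the chain itself). I would define $\tau(\X)=(a,b)$, suitably normalized, and check directly that $\leq_p$ corresponds to the product order of two copies of $\omega$. Bi-p-morphisms can shorten the chain and merge leaves, but the Down condition should prevent creating or removing the fan in forbidden ways; I expect some off-by-one care with the degenerate cases. Then Theorem \ref{thm bpo generation}(i),(iii) makes $(\T_2,\leq_p)$ a BPO.

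\textbf{Inductive step.} For $\X\in\T_{n+1}$, let $x$ be the lowest point of the ``spine'', meaning the maximal chain from the co-root down to the last branching point. The upper part $\up x$, together with the immediate predecessors of $x$ that are leaves, forms a member of $\T_2$. The lower part is the multiset of subtrees $\down y$ for the non-leaf immediate predecessors $y$ of $x$. Each such $\down y$ must omit $\C_n$, since otherwise attaching $x$ and a sibling would produce a copy of $\C_{n+1}$. Since $n\geq 2$, no $\down y$ is a singleton here, so this decomposition is well defined.

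Define $\pi(\X)=(\text{upper part},\text{multiset of lower subtrees})\in \T_2\times\T_n^\#$. The key claim is that $\pi$ is order reflecting: given $\pi(\X)\leq\pi(\Y)$, glue a bi-p-morphism of the upper parts to bi-p-morphisms $\down y\twoheadrightarrow\down y'$ chosen along the surjective multiset map from Definition \ref{def multi proj}.

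Then Theorem \ref{thm multi proj is bpo}, Theorem \ref{thm bpo generation}(ii),(iii) and the induction hypothesis finish the argument. If reflection fails on members of $\T_n$ itself, I would apply the argument only to $\T_{n+1}\smallsetminus\T_n$ and handle $\T_n$ by the induction hypothesis.

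\textbf{Main obstacle.} The gluing step is the hard part. The glued map has to satisfy the Up and Down conditions at the junction point $x$, and surjectivity of the multiset map is exactly what the Down condition at $x$ needs. The choice of decomposition, in particular where the fan of leaves belongs, may need adjusting so that the glued map is genuinely a bi-p-morphism.
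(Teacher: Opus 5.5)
\paragraph{The gap: your decomposition in the inductive step.} Your reduction to ``every $(\T_n,\leq_p)$ is a BPO'' and the shape of the induction are the paper's: an order-reflecting map into $(\T_2,\leq_p)\times(\T_n^\#,<<)$, followed by Theorem \ref{thm multi proj is bpo} and Theorem \ref{thm bpo generation}(ii),(iii). The problem is that you put the leaf predecessors of the branching point $x$ into the upper part and only the non-leaf subtrees into the multiset. With that choice $\pi$ is \emph{not} order reflecting, even on $\T_{n+1}\smallsetminus\T_n$, so your fallback does not rescue it.

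Here is a counterexample. Let $\X$ have co-root $x$ with two immediate predecessors $a,b$, and let $a_1<a$ and $b_1<b$ be minimal; this is $\F_0$. Let $\X'$ be $\X$ with one extra leaf $y'$ attached directly below the co-root $x'$. Both lie in $\T_3\smallsetminus\T_2$. Your upper parts are $\bullet$ and the two-element chain, so the first is $\leq_p$ the second. Both lower multisets are $[\La_2,\La_2]$. Hence $\pi(\X)\leq\pi(\X')$.

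Nevertheless $\X\nleq_p\X'$. Any bi-p-morphism $f\colon\X'\to\X$ sends $x'$ to $x$ and the minimal point $y'$ to a minimal point, say $a_1$. Then $a_1\leq a$, while $f[\up y']=\{a_1,x\}$ misses $a$, which violates the Up condition.

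Normalizing the upper part as a pair (spine length, number of leaves) does not help: you get $(0,0)\leq(0,1)$ and the same failure. Read literally as co-trees, your upper parts even make $\pi$ non-injective. A spine ending at $x_1$ with one leaf, and a spine ending at $x_2$ with no leaves (same subtrees below), both yield a $3$-chain.

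The underlying reason is this. An extra leaf of $\X'$ at the branching point can only be sent to a leaf of $\X$ at the branching point. Once the leaves are removed from the multiset, nothing in $\pi(\X)\leq\pi(\X')$ enforces this constraint.

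\paragraph{The fix.} Do not separate the leaves; this is what the paper does in Lemma \ref{lem structure} and the discussion after it. Take as upper part the full fan $\tau(m,k)$ with $k+1=|\precc x_m|$. Take as multiset $[\down y_0,\dots,\down y_k]$ over \emph{all} immediate predecessors of $x_m$, singletons $\bullet\in\T_1\subseteq\T_n$ included.

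Since $\bullet$ is the only bi-p-morphic image of $\bullet$, the surjection witnessing $<<$ must send every singleton of $\X'$ to a singleton of $\X$. That is exactly the missing constraint. In the example, it is not the case that $[\La_2,\La_2]<<[\bullet,\La_2,\La_2]$.

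With this decomposition your gluing works. You must choose the fan bi-p-morphism on the leaves to agree with the multiset surjection $h$, sending $y'_i$ to the co-root of $h(\down y'_i)$. This is possible because the leaves of $\tau(m,k)$ are interchangeable. Surjectivity of $h$ then gives the Down condition at $x_m$, as you anticipated.
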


Let us first introduce some definitions regarding posets and some properties of their bi-p-morphisms that will help us in what follows.
Take a poset $\X$ and $x,y \in X$.
If $y \leq z \leq x$ implies either $z=y$ or $z=x$, we write $y \prec x$ and say that $y$ is an \textit{immediate predecessor} of $x$, or that $x$ is an \textit{immediate successor} of $y$.
Moreover, we denote the set of immediate predecessors of $x$ by 
\[
\precc x \coloneqq \{ z \in X \colon z \prec x\}.
\]
We will also make use of the following notations:
\[
\upc x \coloneqq \up x\smallsetminus\{x\} \hspace{.3cm} \text{ and } \hspace{.3cm} \downc x \coloneqq \down x \smallsetminus\{x\}.
\]

\begin{Lemma}\label{lem bi-p-morphs}
    Let $f\colon \X \to \Y$ be a bi-p-morphism between posets. 
    \benroman
        \item $f[\up x]= \up f(x)$ and $f[\down x]=\down f(x)$, for all $x \in X$.
        
        \item $f$ always maps maximal (resp. minimal) points of $\X$ to maximal (resp. minimal) points of $\Y$.

        \item  If $\X$ and $\Y$ are co-trees then the co-root of $\X$ must be mapped to the co-root of $\Y$, the map $f$ is necessarily surjective, and for $x,z\in X$,
        \[
        x \prec z \text{ implies either } f(x)=f(z) \text{ or } f(x) \prec f(z).
        \]
    \eroman
\end{Lemma}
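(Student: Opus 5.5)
For (i), I will take an upset $U=\up x$ and show $f[\up x]=\up f(x)$. The inclusion $f[\up x]\subseteq \up f(x)$ is order preservation: if $x\leq z$ then $f(x)\leq f(z)$. For the reverse inclusion, take $y\geq f(x)$; the Up condition gives some $z\in\up x$ with $f(z)=y$, so $y\in f[\up x]$. The downset case is symmetric, using order preservation together with the Down condition.

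For (ii), let $x$ be maximal in $\X$ and suppose $y\geq f(x)$. By (i), $y=f(z)$ for some $z\in\up x=\{x\}$, so $y=f(x)$ and $f(x)$ is maximal. Minimal points are handled dually.

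For (iii), let $r$ be the co-root of $\X$. Then $r$ is the unique maximal point of $\X$, and by (ii) $f(r)$ is maximal in $\Y$. A co-tree has a greatest element, so its only maximal point is its co-root; hence $f(r)$ is the co-root of $\Y$.
\begin{itemize}
\item \emph{Surjectivity.} By (i), $f[X]\supseteq f[\down r]=\down f(r)=Y$.
\item \emph{Covers.} Let $x\prec z$ and suppose $f(x)\neq f(z)$. Order preservation gives $f(x)\leq f(z)$, so $f(x)<f(z)$. Take $w$ with $f(x)\leq w\leq f(z)$. By the Up condition there is $u\geq x$ with $f(u)=w$. In a co-tree, $\up x$ is a chain, so $u$ and $z$ are comparable.
\begin{itemize}
\item If $u\geq z$, then $w=f(u)\geq f(z)$, hence $w=f(z)$.
\item If $u<z$, then $x\leq u<z$ together with $x\prec z$ forces $u=x$, hence $w=f(x)$.
\end{itemize}
So every such $w$ equals $f(x)$ or $f(z)$, and therefore $f(x)\prec f(z)$.
\end{itemize}

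The only step requiring care is this last one, because it relies on $\up x$ being a chain. Everything else is direct unwinding of the bi-p-morphism axioms.
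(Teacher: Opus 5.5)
Your proof is correct and complete. The paper states this lemma without proof, as a routine consequence of the definitions, and your argument is that routine verification. The one step with real content, preservation of covers, is handled correctly: you pair the Up condition with the fact that $\up x$ is a chain in a co-tree, whereas the symmetric argument through the Down condition would break down, since $\down z$ need not be a chain. You also implicitly, and rightly, read $x \prec z$ as including $x<z$; the paper's literal definition of $\prec$ omits this, and without it the last clause would fail for incomparable $x,z$.
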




The first step in our proof that the bi-p-morphic image relation $\leq_p$ on 
\[
\T_n = \{\X \colon \X \text{ is a finite co-tree s.t. } \C_n \not \hookrightarrow \X \}
\]
is a BPO is to find a useful characterization of these co-trees.

Recall Figure \ref{Fig:finite-combs2}.
Since the $1$-comb $\C_1$ is just a two element chain, it is easy to see that the only finite co-tree that does not admit $\C_1$ as a subposet is the trivial singleton poset, which we denote by $\bullet$.
So $\T_1 = \{\bullet\}$ and $\leq_p$ is obviously a well-order on $\T_1$, hence a BPO by Theorem \ref{thm bpo generation}.(i).

Next, we show that the co-trees in $\T_2 = \{\X \colon \X \text{ is a finite co-tree s.t. } \C_2 \not \hookrightarrow \X \}$ are exactly those of the form depicted in Figure \ref{Fig:t2}, where $m,k \in \omega$ and the set $\precc x_m=\{y_0,\dots, y_k\}$ of immediate predecessors of $x_m$ coincides with the set of minimal elements of the co-tree $\tau(m,k)$.
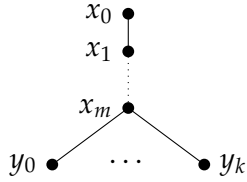
\begin{figure}[h]
\begin{tikzpicture}
    \tikzstyle{point} = [shape=circle, thick, draw=black, fill=black , scale=0.35]
    \node [label=left:{$x_0$}] (0) at (0,3) [point] {};
    \node [label=left:{$x_1$}] (1) at (0,2.5) [point] {};
    \node [label=left:{$x_m$}] (m) at (0,1.75) [point] {};
    \node [label=left:{$y_0$}] (0') at (-1,1) [point] {};
    \node [label=right:{$y_k$}] (k) at (1,1) [point] {};
    \node [label=above:{$\dots$}] () at (0,.75) [] {};

    \draw (0)--(1);
    \draw [dotted] (1)--(m);
    \draw (0')--(m)--(k);
\end{tikzpicture}
\caption{The co-tree $\tau(m,k)$.}
\label{Fig:t2}
\end{figure} 

\noindent In fact, we claim that not only is the map $\tau \colon \omega \times \omega \to \T_2$ a bijection, but it induces an order isomorphism between $(\omega, \leq) \times (\omega, \leq)$ (where $\leq$ is the natural order of $\omega$) and $(\T_2,\leq_p)$.

\begin{Proposition}\label{prop tau is iso}
    The map $\tau \colon (\omega, \leq) \times (\omega, \leq) \to (\T_2,\leq_p)$ is an order isomorphism (where $\tau(m,k)$ is the co-tree depicted in Figure \ref{Fig:t2}).
    Consequently, $(\T_2,\leq_p)$ is a BPO.
\end{Proposition}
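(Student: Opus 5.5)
The plan has three steps: characterize the members of $\T_2$ combinatorially, check that $\tau$ is a bijection onto $\T_2$, and then check that $\tau$ both preserves and reflects order. The BPO claim then follows from Theorem \ref{thm bpo generation}: $(\omega,\leq)$ is a well-order, hence a BPO by item (i); the product $(\omega,\leq)\times(\omega,\leq)$ is a BPO by item (iii); and BPOness passes to $(\T_2,\leq_p)$ along the order isomorphism $\tau^{-1}$, which is in particular order reflecting, by item (ii).

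\textbf{Characterization of $\T_2$.} I will show that a finite co-tree $\X$ omits $\C_2$ iff two conditions hold: its non-minimal points form a chain $x_0>x_1>\dots>x_m$, and every minimal point lies below the least non-minimal point $x_m$. For the forward direction I argue by contraposition, in two cases.
\benroman
    \item Suppose two non-minimal points $a,b$ are incomparable. Let $c$ be their join, which exists because $\X$ is a co-tree. Choose a minimal point $a'<a$. Then $c\mapsto c_2$, $a\mapsto c_1$, $a'\mapsto c_1'$, $b\mapsto c_2'$ is an order embedding of $\C_2$.
    \item Suppose the non-minimal points form a chain but some minimal point $y$ is not below $x_m$. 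Then $y<x_i$ for some $i<m$, and $y$ is incomparable to $x_m$. Choose a minimal point $y'<x_m$. Then $x_i\mapsto c_2$, $x_m\mapsto c_1$, $y'\mapsto c_1'$, $y\mapsto c_2'$ embeds $\C_2$.
\eroman
For the converse, any copy of $\C_2$ needs the two incomparable points $c_1$ and $c_2'$ below $c_2$. Since $c_1$ is non-minimal, this yields either two incomparable non-minimal points, or a minimal point that is not below the least non-minimal point.

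\textbf{Bijectivity.} The characterization shows that each member of $\T_2$ is some $\tau(m,k)$. Injectivity holds because $m$ and $k$ are recovered, up to isomorphism, as the number of non-minimal points and the number of minimal points (each minus one). The degenerate cases (the singleton, and chains where $x_m$ has a single predecessor) must be matched to the indexing convention of Figure \ref{Fig:t2}. This is a bookkeeping point that I will fix once at the start, so that each shape is counted exactly once.

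\textbf{Order reflection.} Suppose $f\colon\tau(m,k)\twoheadrightarrow\tau(m',k')$ is a bi-p-morphism. By Lemma \ref{lem bi-p-morphs}(ii), $f$ sends minimal points to minimal points. Each target minimal point $y'$ has a preimage $x$, and the Down condition applied to $y'\leq f(x)$ gives a minimal point below $x$ that also maps to $y'$. Hence the minimal points map onto the minimal points, which gives $k'\leq k$. Every non-minimal target point has only non-minimal preimages, again by the Down condition. So the chain $x_0,\dots,x_m$ surjects onto $x'_0,\dots,x'_{m'}$, which gives $m'\leq m$.

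\textbf{Order preservation.} Given $m'\leq m$ and $k'\leq k$, I define the map explicitly. Send $x_0,\dots,x_{m-m'}$ to $x'_0$ and $x_{m-m'+j}$ to $x'_j$. Send $y_0,\dots,y_{k'}$ bijectively onto $y'_0,\dots,y'_{k'}$, and send the remaining $y_j$ to $y'_{k'}$. This map is order preserving and surjective. The Up and Down conditions reduce to two observations: the image of $\up x$ is the whole upper chain above $f(x)$, and every $x_i$ lies above all the $y_j$. I expect no step to be a real obstacle. The most delicate parts are the embedding argument in the characterization and the convention-dependent degenerate cases of the indexing.
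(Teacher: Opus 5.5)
Your outline follows the paper's: characterize $\T_2$, use an explicit collapsing map for preservation, use that minimal points go onto minimal points for reflection, then apply Theorem \ref{thm bpo generation}. Your arguments for the non-degenerate shapes are sound.

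The genuine gap is the step you postpone as bookkeeping. The degenerate cases cannot be matched to any indexing, because the isomorphism claim itself is false.

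Your reflection count needs $x_0,\dots,x_m$ to be exactly the non-minimal points and $y_0,\dots,y_k$ exactly the minimal ones. With that reading, $\tau(m,0)$ is the $(m+2)$-element chain, and the singleton $\bullet\in\T_1\subseteq\T_2$ is not in the image. What you have actually proved is that $\tau$ is an order embedding onto $\T_2\smallsetminus\{\bullet\}$.

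The paper's proof instead sets $y_0\coloneqq x_m$ for chains, so $\tau(m,0)$ is the $(m+1)$-element chain and $\tau(0,0)=\bullet$. This makes $\tau$ bijective, but now the $x$'s and $y$'s overlap. Your step ``the chain $x_0,\dots,x_m$ surjects onto $x'_0,\dots,x'_{m'}$'' breaks, and reflection genuinely fails. Send the co-root of $\tau(0,1)$ to the top of $\tau(1,0)=\La_2$ and both minimal points to its bottom. This is a surjective bi-p-morphism, yet $(1,0)\not\leq(0,1)$. The paper's own reflection step, the claim that $g[\{y'_0,\dots,y'_{k'}\}]=\{y_0,\dots,y_k\}$ entails $g[\{x'_0,\dots,x'_{m'}\}]=\{x_0,\dots,x_m\}$, fails in exactly this case $k=0<k'$.

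No convention can rescue the statement. The singleton $\bullet$ is the least element of $(\T_2,\leq_p)$, and its unique upper cover is $\La_2$, which is a bi-p-morphic image of every finite co-tree with at least two points. By contrast, $(0,0)$ has two upper covers in $\omega\times\omega$. In fact $(\T_2,\leq_p)$ is $\omega\times\omega$ with a new bottom element adjoined.

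What survives is the BPO conclusion, and that is all Proposition \ref{prop tn is bpo} needs. Your final paragraph only uses that $\tau^{-1}$ is order reflecting, i.e.\ that $\tau$ is bijective and order preserving. This does hold under the paper's convention. Your collapsing map remains a well-defined bi-p-morphism when $k'=0$ and $y'_0=x'_{m'}$, and injectivity follows by separating chains from non-chains. Alternatively, keep your convention and note that $\bullet\mapsto(0,0)$, $\tau(m,k)\mapsto(m+1,k)$ is an order reflecting map from $\T_2$ into $\omega\times\omega$, so Theorem \ref{thm bpo generation}.(ii) applies.

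One minor slip: the Down condition applied to $y'\leq f(x)$ does not give a \emph{minimal} preimage, since $x$ itself already witnesses it. Instead, take a minimal $w\leq x$ and use $f(w)\leq f(x)=y'$ together with the minimality of $y'$.
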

\begin{proof}
    We start by proving that $\tau$ is indeed a bijection.
    Let $m,k\in \omega$.
    From a quick inspection of the structure of $\C_2$ and of the co-tree $\tau(m,k)$, it is immediate that there can be no order embedding from $\C_2$ into $\tau(m,k)$, i.e., $\C_2 \not \hookrightarrow \tau(m,k)$, and since $\tau(m,k)$ is a finite co-tree by definition, we have that $\tau(m,k) \in \T_2 = \{\X \colon \X \text{ is a finite co-tree s.t. } \C_2 \not \hookrightarrow \X \}$.
    So the map $\tau$ is well defined.

    To establish surjectivity, take a finite co-tree $\X$ and suppose $\C_2 \not \hookrightarrow \X$.
    Denote the co-root of $\X$ by $x_0\coloneqq Max(\X)$.
    If $\X$ is a chain, then it is a finite chain, so we can list $X=\{x_0, \dots, x_m\}$, where $m \in \omega$ and $x_0 > \dots > x_m$. 
    We set $k \coloneqq 0$ and $y_0 \coloneqq x_m$, hence $\X=\tau(m,0)$ follows. 

    Assume now that $\X$ is not a chain. 
    Because of this, and since $\X$ has a maximum $x_0$, an easy argument proves that there must exist a greatest element $z \in \down x_0=X$ with more than one immediate predecessor.
    Thus, we have $\precc z = \{y_0, \dots , y_k\}$ for some $1 \leq k \in \omega$.
    Furthermore, the definition of $z$ ensures that $\down x_0 \smallsetminus \downc z$ is a finite chain, so we can list $\down x_0 \smallsetminus \downc z =\{x_0, \dots, x_m\}$, where $m \in \omega$ and $x_0 > \dots > x_m = z$. 
    Notice that
    \[
    X= \down x_0 = (\down x_0 \smallsetminus \downc x_m) \cup \downc x_m= (\down x_0 \smallsetminus \downc x_m) \cup \down (\precc x_m)= \{x_0, \dots, x_m\} \cup \down \{y_0, \dots , y_k\}
    \]

    In order to obtain the equality $\X=\tau(m,k)$, and thus prove that $\tau$ is surjective, it now suffices to show that all the $y_0, \dots , y_k$ are minimal points.
    For suppose otherwise and say, without loss of generality, that there exists $v < y_0$.
    Then recalling that $z=x_m$ has at least one other immediate predecessor $y_1$ distinct from $y_0$ ensures that the subposet of $\X$ generated by $\{v,y_0,y_1,z\}$ is a copy of the $2$-comb $\C_2$, thus contradicting $\C_2 \not \hookrightarrow \X$.

    Having shown that the map $\tau$ is surjective, to conclude that it is an order isomorphism it remains to prove that $\tau$ is order invariant, i.e., that 
    \[
    (m,k) \leq (m',k') \iff \tau(m,k) \leq_p \tau(m',k')
    \]
    holds true for all $m,m',k,k' \in \omega$.
    Accordingly, take $m,m',k,k' \in \omega$ and list 
    \[
    \tau(m',k') = \{x_0', \dots , x'_{m'}\} \cup \{y'_0, \dots, y'_{k'}\} \text{ and } \tau(m,k) = \{x_0, \dots , x_{m}\} \cup \{y_0, \dots, y_{k}\}
    \]
    following the naming conventions of Figure \ref{Fig:t2}.
    
    Suppose first that $(m,k) \leq (m',k')$, i.e., that $m \leq m'$ and $k \leq k'$.
    We define a surjective map $f \colon \tau(m',k') \twoheadrightarrow \tau(m,k)$ by setting
    \[
    f(x'_i) \coloneqq 
    \begin{cases}
        x_i & \text{if } i \leq m, \\
        x_m & \text{if } m\leq i \leq m',
    \end{cases}
    \hspace{.3cm}
    \text{ and } 
    \hspace{.3cm}
    f(y'_i) \coloneqq 
    \begin{cases}
        y_i & \text{if } i \leq k, \\
        y_k & \text{if } k\leq i \leq k',
    \end{cases}
    \]
    and an easy verification shows it to be a bi-p-morphism (see Definition \ref{def bi-p-morphism}), thus $\tau(m,k) \leq_p \tau(m',k')$.
    This proves that $\tau$ is order preserving.

    To see that $\tau$ is also order reflecting, hence order invariant, we now assume $\tau(m,k) \leq_p \tau(m',k')$, i.e., that there exists a surjective bi-p-morphism $g\colon \tau(m',k') \twoheadrightarrow \tau(m,k)$.
    We know from Lemma \ref{lem bi-p-morphs}.(ii) that $g[\{y'_0, \dots, y'_{k'}\}] \subseteq \{y_0, \dots, y_{k}\}$.
    If this inclusion is proper, i.e., if there exists $y_j \in \{y_0, \dots, y_{k}\} \smallsetminus g[\{y'_0, \dots, y'_{k'}\}]$, then the fact that $g$ is surjective forces the existence of $i \leq m'$ satisfying $g(x_i')=y_j$. 
    But bi-p-morphisms are order preserving, so $\{y'_0, \dots, y'_{k'}\} \subseteq \down x_i'$ implies that all the points in $g[\{y'_0, \dots, y'_{k'}\}]$ must lie below $y_j$.
    Since we assumed $y_j \in \{y_0, \dots, y_{k}\} \smallsetminus g[\{y'_0, \dots, y'_{k'}\}]$, this contradicts the minimality of $y_j$.
    Thus, we have $g[\{y'_0, \dots, y'_{k'}\}] = \{y_0, \dots, y_{k}\}$ and it is clear that $k\leq k'$.
    Again using the surjectivity of $g$, together with our listing of $\tau(m,k)$ and $\tau(m',k')$ displayed above, it is easy to see that the equality $g[\{y'_0, \dots, y'_{k'}\}] = \{y_0, \dots, y_{k}\}$ entails $g[\{x_0', \dots , x'_{m'}\}]=\{x_0, \dots , x_{m}\}$, so $m\leq m'$ follows and we are done showing that $(m,k) \leq (m',k')$.

    The last part of the statement is now immediate from Theorem \ref{thm bpo generation}.(i)\&(iii), since we have proved $(\T_2,\leq_p)$ to be order isomorphic to the product of two well-orders.
\end{proof}

We now proceed to generalize this useful depiction of the co-trees in $\T_2$ to an arbitrary $\T_{n+1}$.
We do so by noting that in the previous proof, demanding the minimality of the $y_0, \dots, y_k$ (the points below $z=x_m$, the greatest element of $\X$ with more than one immediate predecessor) is equivalent to forbidding the existence of an order embedding from the $1$-comb $\C_1$ into any of the co-trees $\down y_0 \dots, \down y_k$.
In other words, we are requiring that all the co-trees strictly below $x_m$ are elements of $\T_1$.

We claim that any $\X \in \T_{n+1} = \{\X \colon \X \text{ is a finite co-tree s.t. } \C_{n+1} \not \hookrightarrow \X \}$ is either a chain, or consists of a finite chain 
\[
Max(\X) =\colon x_0 > \dots > x_m \coloneq Max(\{z \in X \colon |\precc z|>1\})
\]
and of $|\precc x_m|$-many co-trees in $\T_n$, whose co-roots are exactly the immediate predecessors of $x_m$.
We depict this in Figure \ref{Fig:structure}.


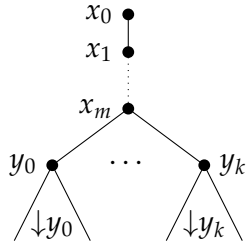
\begin{figure}[h]
\begin{tikzpicture}
    \tikzstyle{point} = [shape=circle, thick, draw=black, fill=black , scale=0.35]
    \node [label=left:{$x_0$}] (0) at (0,3) [point] {};
    \node [label=left:{$x_1$}] (1) at (0,2.5) [point] {};
    \node [label=left:{$x_m$}] (m) at (0,1.75) [point] {};
    \node [label=left:{$y_0$}] (0') at (-1,1) [point] {};
    \node [label=right:{$y_k$}] (k) at (1,1) [point] {};
    \node [label=above:{$\down y_0$}] () at (-1,-.25) [] {};
    \node [label=above:{$\down y_k$}] () at (1,-.25) [] {};
    \node [label=above:{$\dots$}] () at (0,.75) [] {};

    \draw (0)--(1);
    \draw [dotted] (1)--(m);
    \draw (0')--(m)--(k);
    \draw (-1.5,0)--(0')--(-.5,0);
    \draw (.5,0)--(k)--(1.5,0);
\end{tikzpicture}
\caption{A co-tree in $\T_{n+1}$.}
\label{Fig:structure}
\end{figure} 

Before we prove our claim, we note that a finite comb always order embeds into a bigger finite comb, that is, $\C_n \hookrightarrow \C_m$ for all $1\leq n \leq m \in \omega$.
Consequently, given a finite co-tree $\X$, we have that $\C_n \not \hookrightarrow \X$ implies $\C_m \not \hookrightarrow \X$ whenever $n \leq m$.
This establishes the condition
\[
\forall 1 \leq n \leq m \in \omega \, (\T_n \subseteq \T_m).
\]

\begin{Lemma}\label{lem structure}
    Let $\X$ be a finite co-tree and $n$ a positive integer.
    Then $\X \in \T_{n+1}$ iff $\X$ is of the form depicted in Figure \ref{Fig:structure}, where $m,k\in \omega$ and $\down y_i \in \T_n$ for all $i\leq k$.
\end{Lemma}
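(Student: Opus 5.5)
We prove both implications by mimicking the $\T_2$ case in Proposition \ref{prop tau is iso}. In any finite co-tree $\X$ with co-root $x_0$, either $\X$ is a chain, or there is a greatest point $x_m$ with $|\precc x_m|>1$. We fix $x_0>\dots>x_m$ to be the chain $\down x_0\smallsetminus \downc x_m$ and $\precc x_m=\{y_0,\dots,y_k\}$, so that
\[
X=\{x_0,\dots,x_m\}\cup \down\{y_0,\dots,y_k\}.
\]
(If $\X$ is a chain, it has the depicted form with $k=0$: we take the chain down to its second-lowest point as $x_0,\dots,x_m$ and $y_0$ as the bottom; a singleton co-tree lies in $\T_n$.) So the lemma reduces to showing that $\C_{n+1}\not\hookrightarrow \X$ iff $\C_n\not\hookrightarrow \down y_i$ for every $i$. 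In the chain case this is trivial, since chains belong to every $\T_n$ (no comb embeds into a chain), and $\down y_0$ is again a chain.

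For the forward direction we argue by contraposition. Suppose $\C_n\hookrightarrow \down y_i$ via some $e$, and choose $j\neq i$ with $y_j\in\precc x_m$; this exists because $k\geq 1$ in the non-chain case. We extend $e$ by sending $c_{n+1}\mapsto x_m$ and $c'_{n+1}\mapsto y_j$. This is an order embedding $\C_{n+1}\hookrightarrow\X$, because:
\begin{itemize}
\item the image of $e$ lies in $\down y_i\subseteq \downc x_m$, so every $c_l$ and $c_l'$ with $l\le n$ lands below $x_m$, matching the order of $\C_{n+1}$;
\item $y_j$ is incomparable with every point of $\down y_i$, since in a co-tree $\down y_i\cap\down y_j=\emptyset$ for distinct immediate predecessors of the same point.
\end{itemize}
Hence $\X\notin\T_{n+1}$.

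For the converse, suppose $h\colon\C_{n+1}\hookrightarrow\X$ and show that some $\down y_i$ contains a copy of $\C_n$. We look at where $h$ sends the subcomb $\{c_1,c_1',\dots,c_n,c_n'\}$, which is a copy of $\C_n$ with top $c_n$. The point $c'_{n+1}$ is incomparable with $c_n$, and both lie below $c_{n+1}$. Hence $h(c_n)$ and $h(c'_{n+1})$ are incomparable points of $\X$. Since $\{x_0,\dots,x_m\}$ is a chain lying above everything else, neither point can be some $x_l$: each $x_l$ is comparable with every point of $\X$. Therefore $h(c_n)\in\down y_i$ for some $i$, and then the whole image of $\{c_l,c_l'\colon l\le n\}\subseteq\down c_n$ lies in $\down h(c_n)\subseteq\down y_i$. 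Restricting $h$ gives $\C_n\hookrightarrow\down y_i$, so $\down y_i\notin\T_n$.

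The main obstacle is only the bookkeeping about the structure of co-trees. We need the existence and uniqueness of $x_m$ (as in the proof of Proposition \ref{prop tau is iso}), the disjointness of the $\down y_i$, and the fact that each $x_l$ is comparable to all points. Each $\down y_i$ is indeed a finite co-tree, since principal downsets of co-trees are co-trees, so the statement makes sense.
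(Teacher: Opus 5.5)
Your proof is correct and follows the paper's. You use the same decomposition at the greatest branching point $x_m$, the same extension of $\C_n\hookrightarrow\down y_i$ by $c_{n+1}\mapsto x_m$ and $c'_{n+1}\mapsto y_j$ for the forward direction, and the same incomparability of $h(c_n)$ and $h(c'_{n+1})$ for the converse. There you rightly skip the paper's preliminary step placing $f(c_{n+1})$ at some $x_t$, since every $x_l$ is already comparable with all of $X$.

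One slip is in your chain case. Chains are not in every $\T_n$, since $\C_1$ is the two-element chain and $\T_1=\{\bullet\}$. The correct justification is that $\C_{n+1}$ is not a chain for $n\geq 1$ and that $\down y_0$ is a singleton. Also, $\X=\bullet$ has no second-lowest point, so for it you need the paper's degenerate convention $y_0=x_m$.
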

\begin{proof}
    We start by proving the left to right implication.
    Take $\X$ and suppose $\X \in \T_{n+1}$, i.e., that $\X$ is a finite co-tree such that $\C_{n+1} \not \hookrightarrow \X$.
    We denote the co-root of $\X$ by $x_0\coloneqq Max(\X)$.
    If $\X$ is a chain, then it is a finite chain, so we can list $X=\{x_0, \dots, x_m\}$, where $m \in \omega$ and $x_0 > \dots > x_m$. 
    We set $k \coloneqq 0$ and $y_0 \coloneqq x_m$ and we are done with this case, because 
    \[
    \down y_0 = \{y_0\} \in \{\bullet\} = \T_1 \subseteq \T_n.
    \]

    Assume now that $\X$ is not a chain. 
    Because of this, and since $\X$ has a maximum $x_0$, an easy argument proves that there must exist a greatest element $z \in \down x_0=X$ with more than one immediate predecessor.
    Thus, we have $\precc z = \{y_0, \dots , y_k\}$ for some $1 \leq k \in \omega$.
    Furthermore, the definition of $z$ ensures that $\down x_0 \smallsetminus \downc z$ is a finite chain, so we can list $\down x_0 \smallsetminus \downc z =\{x_0, \dots, x_m\}$, where $m \in \omega$ and $x_0 > \dots > x_m \coloneqq z$. 
    Notice that
    \[
    X= \down x_0 = (\down x_0 \smallsetminus \downc x_m) \cup \downc x_m= (\down x_0 \smallsetminus \downc x_m) \cup \down (\precc x_m)= \{x_0, \dots, x_m\} \cup \down \{y_0, \dots , y_k\}
    \]
    
    It remains to show that $\down y_i \in \T_n$ for all $i \leq k$.
    To prove this, we recall $1 \leq k$ and take $i < j \leq k$.
    Suppose that $\down y_i \notin \T_n$.
    It is clear that $\down y_i$ is a finite co-tree since it is a subposet of the finite co-tree $\X$, so the assumption $\down y_i \notin \T_n$ entails the existence of an order embedding  $f \colon \C_n  \hookrightarrow \down y_i$.
    We recall Figure \ref{Fig:finite-combs2} and identify $\C_n$ with the subposet $\down c_n$ of $\C_{n+1}$.
    Defining $g \colon \C_{n+1} \to \X$ by 
    \[
    g{\restriction_{\C_n}} \coloneqq f \hspace{.2cm}\text{ and }\hspace{.2cm} g(c_{n+1}) \coloneqq x_m \hspace{.2cm}\text{ and }\hspace{.2cm} g(c'_{n+1})\coloneqq y_j
    \]
    clearly yields an order embedding of $\C_{n+1}$ into $\X$, thus contradicting $\X \in \T_{n+1}$.
    We conclude $\down y_i \in \T_n$, as desired.
    
    Next we prove that any finite co-tree $\X$ of the form depicted in Figure \ref{Fig:structure} must be contained in $\T_{n+1}$.
    For suppose not, so there exists an order embedding $f \colon \C_{n+1} \hookrightarrow \X$.
    Consider the point $f(c_{n+1}) \in X$ and recall that $\C_{n+1} = \down c_{n+1}$.
    It follows that for any $i \leq k$ we must have $f(c_{n+1}) \nleq y_i$, since otherwise the fact that $f$ is an order embedding would force $f[\C_{n+1}]=f[\down c_{n+1}] \subseteq \down y_i$, hence $f \colon \C_{n+1} \hookrightarrow \down y_i$ and we would have $\down y_i \notin \T_{n+1}$, contradicting the assumption that $\down y_i \in \T_n$ because $\T_n \subseteq \T_{n+1}$.

    We now know that $f(c_{n+1}) \nleq y_i$ for all $i \leq k$, and since the equality 
    \[
    X = \up x_m \cup \down y_0 \cup \dots \cup \down y_k
    \] 
    is immediate from the depiction of $\X$, we can infer that $f(c_{n+1}) \geq x_m$.
    As $\up x_m = \{x_0, \dots, x_m\}$, we have $f(c_{n+1})=x_t$ for some $t \leq m$.
    We claim that this is enough to prove that $f(c_n) \leq y_j$ for some $j \leq k$, hence also $f[\down c_n] = \C_n \hookrightarrow \down y_j$, which again contradicts $\down y_j \in \T_n$.
    To see why, recall $c_n < c_{n+1} > c'_{n+1}$ and that $c_n$ is incomparable to $c'_{n+1}$.
    So $f$ being an order embedding implies $f(c_n) < f(c_{n+1}) = x_t > f(c'_{n+1})$ and that $f(c_n)$ is incomparable to $f(c'_{n+1})$.  
    But then $f(c_n)$ cannot lie in the chain $\{x_t, \dots ,x_m\}$, since 
    \[
    f(c'_{n+1}) \in \down x_t = \{x_t, \dots ,x_m\} \cup \down x_m
    \]
    would force $f(c_n)$ and $f(c'_{n+1})$ to be comparable.
    As we also have 
    \[
    f(c_{n}) \in \down x_t = \{x_t, \dots ,x_m\} \cup \down x_m,
    \]
    from $f(c_{n}) \notin \{x_t, \dots ,x_m\}$ we can now conclude $f(c_{n}) \in  \downc x_m = \down y_0 \cup \dots \cup \down y_k$, as desired.
\end{proof}

With this characterization of the co-trees in $\T_{n+1}$, we can now introduce the strategy behind our proof that every such set equipped with the bi-p-morphic image relation $\leq_p$ is a BPO.
Say we have a co-tree $\X$ in $\T_{n+1}$.
By Lemma \ref{lem structure}, we know $\X$ to be of the form
\begin{figure}[h]
\begin{tikzpicture}
    \tikzstyle{point} = [shape=circle, thick, draw=black, fill=black , scale=0.35]
    \node [label=left:{$x_0$}] (0) at (0,3) [point] {};
    \node [label=left:{$x_1$}] (1) at (0,2.5) [point] {};
    \node [label=left:{$x_m$}] (m) at (0,1.75) [point] {};
    \node [label=left:{$y_0$}] (0') at (-1,1) [point] {};
    \node [label=right:{$y_k$}] (k) at (1,1) [point] {};
    \node [label=above:{$\down y_0$}] () at (-1,-.25) [] {};
    \node [label=above:{$\down y_k$}] () at (1,-.25) [] {};
    \node [label=above:{$\dots$}] () at (0,.75) [] {};

    \draw (0)--(1);
    \draw [dotted] (1)--(m);
    \draw (0')--(m)--(k);
    \draw (-1.5,0)--(0')--(-.5,0);
    \draw (.5,0)--(k)--(1.5,0);
\end{tikzpicture}
\end{figure} 

\noindent for some $m,k \in \omega$ and some $\Y_i \coloneqq \down y_i \in \T_n$.
We can write this as
\[
\X = \{x_0,\dots,x_m\} \cup \Y_0 \cup \dots \cup \Y_k = \{x_0,\dots,x_m\} \cup \{y_0, \dots, y_k\} \cup \down y_0 \cup \dots \cup \down y_k.
\]
So $\X$ consists of an upper part, the co-tree 
\[
\{x_0,\dots,x_m\} \cup \{y_0, \dots, y_k\}=\tau(m,k) \in \T_2
\]
(recall Proposition \ref{prop tau is iso}), and a lower part, a finite collection of elements of $\T_n$, whose order is irrelevant (the co-tree obtained by switching $y_0$ and $\down y_0$ with $y_k$ and $\down y_k$ is still $\X$ because we identify isomorphic co-trees) and where the same element can appear multiple times (nothing forbids the co-trees $\down y_0$ and $\down y_k$ to be isomorphic).
In other words, the lower part of $\X$ can be described as the finite multiset $[\Y_0, \dots, \Y_k]$ of $\T_n$.
Moreover, these upper and lower parts completely determine the co-tree $\X$.
Accordingly, we can identify $\X$ with the pair
\[
\pi(\X)\coloneqq \big(\tau(m,k),[\Y_0, \dots, \Y_k]\big)
\]
and this assignment clearly defines an injection of sets $\pi \colon \T_{n+1} \to \T_2 \times \T_n^\#$, where $\T_n^\#$ denotes the set of finite multisets of $\T_n$.

Notice that $\pi$ is not surjective, because a pair $\big(\tau(m,k),M\big) \in \T_2 \times \T_n^\#$ for which the length of the finite multiset $M$ does not coincide with the integer $k+1$ will never correspond under our assignment to a co-tree in $\T_{n+1}$.

Thus, if we can show that $\pi \colon \T_{n+1} \to \T_2 \times \T_n^\#$ induces an order reflecting map from $(\T_{n+1}, \leq_p)$ to $(\T_2, \leq_p) \times (\T_n^\#, <<)$ (where $<<$ is the multiset projectivity relation, see Definition \ref{def multi proj}), and we combine Theorem \ref{thm bpo generation}.(ii)\&(iii) with Theorem \ref{thm multi proj is bpo} and with Proposition \ref{prop tau is iso}, the only thing missing in the proof that $(\T_{n+1}, \leq_p)$ is a BPO will be the assumption that so is $(\T_{n}, \leq_p)$.

\begin{Lemma}\label{lem ord reflecting}
    The map $\pi \colon (\T_{n+1}, \leq_p) \to (\T_2, \leq_p) \times (\T_n^\#, <<)$ defined above is order reflecting.  
\end{Lemma}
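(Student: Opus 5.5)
The plan is to unfold the definitions and build a surjective bi-p-morphism by hand. Suppose $\X,\X' \in \T_{n+1}$ with
\[
\pi(\X)=\big(\tau(m,k),[\Y_0,\dots,\Y_k]\big) \quad\text{and}\quad \pi(\X')=\big(\tau(m',k'),[\Y'_0,\dots,\Y'_{k'}]\big),
\]
and assume $\tau(m,k)\leq_p\tau(m',k')$ and $[\Y_0,\dots,\Y_k] << [\Y'_0,\dots,\Y'_{k'}]$. We must produce a surjective bi-p-morphism $h\colon \X' \twoheadrightarrow \X$.

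By Proposition \ref{prop tau is iso}, the first hypothesis gives $m\leq m'$ and $k\leq k'$. By Definition \ref{def multi proj}, the second gives a surjective map of multisets $\sigma\colon\{0,\dots,k'\}\twoheadrightarrow\{0,\dots,k\}$ such that $\Y_{\sigma(j)}\leq_p \Y'_j$ for every $j$. So for each $j\leq k'$ we fix a surjective bi-p-morphism $g_j\colon \Y'_j=\down y'_j \twoheadrightarrow \down y_{\sigma(j)}=\Y_{\sigma(j)}$. By Lemma \ref{lem bi-p-morphs}.(iii), each $g_j$ sends $y'_j$ to $y_{\sigma(j)}$. Define $h$ by
\[
h(x'_i)\coloneqq x_{\min(i,m)} \quad\text{and}\quad h{\restriction_{\down y'_j}}\coloneqq g_j .
\]
This is well defined because $X'$ is the disjoint union of the chain $\{x'_0,\dots,x'_{m'}\}$ and the $\down y'_j$.

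Next we check the three conditions of Definition \ref{def bi-p-morphism} and surjectivity, splitting cases by whether a point lies in the top chain or in some $\down y'_j$.

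\textbf{Order preservation.} Inside a fixed $\down y'_j$ this follows from $g_j$. Inside the chain it is clear. If $z\in\down y'_j$ and $z\leq x'_i$, then $h(z)\leq y_{\sigma(j)}<x_m\leq h(x'_i)$.

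\textbf{Up condition.} For $z\in\down y'_j$ and $h(z)\leq w$, there are two subcases. If $w\in\down y_{\sigma(j)}$, we use the up condition of $g_j$. Otherwise $w=x_t$ with $t\leq m$, and $z\leq x'_t$ with $h(x'_t)=x_t$. For a chain point $x'_i$ we have $\up x'_i=\{x'_0,\dots,x'_i\}$, and this maps onto $\up h(x'_i)$.

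\textbf{Down condition.} Let $w\leq h(x'_i)$. If $w=x_t$, then $t\geq \min(i,m)$: take $z=x'_t$ when $t<m$ (then $i\leq t$, so $z\leq x'_i$), and take $z=x'_{\max(i,m)}$ when $t=m$. If instead $w\in\down y_l$, surjectivity of $\sigma$ gives some $j$ with $\sigma(j)=l$, and surjectivity of $g_j$ gives $z\in\down y'_j\subseteq\down x'_i$ with $h(z)=w$. For $z\in\down y'_j$ the down condition is exactly that of $g_j$.

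\textbf{Surjectivity.} This follows from the same two facts: surjectivity of $\sigma$ and of each $g_j$.

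The main obstacle I expect is bookkeeping rather than any deep step. First, the down condition at chain points $x'_i$ with $i>m$ needs $z\leq x'_i$, so $z$ cannot simply be $x'_m$; this is why the case $t=m$ uses $x'_{\max(i,m)}$. Second, there is the degenerate chain case of Lemma \ref{lem structure}, where the paper sets $k=0$ and $y_0=x_m$, so $y_0$ lies on the top chain rather than strictly below $x_m$. There I would treat chains separately: either rewrite a chain of length $m+1$ with a single-point lower part below $x_{m-1}$ so that the uniform construction applies, or verify the chain-to-chain and chain-to-branching cases directly by collapsing maps as in the proof of Proposition \ref{prop tau is iso}.
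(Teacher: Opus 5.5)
Your proof is correct and uses the same gluing strategy as the paper: a map on the upper part $\tau(m',k')$ is combined with surjective bi-p-morphisms between the lower co-trees, and the paper then appeals to a ``straightforward verification''. Your version is in fact more careful. The paper glues an arbitrary $g\colon \tau(m',k')\twoheadrightarrow\tau(m,k)$ with the maps $h_i\colon \Y'_i\twoheadrightarrow h(\Y'_i)$, and these need not agree on the points $y'_i$, which lie in both domains; you instead build the top part from $m\leq m'$ and $\sigma$, so it matches $g_j(y'_j)=y_{\sigma(j)}$. Your worry about the degenerate chain case is also legitimate, since the paper does not treat it separately. Either of your suggested fixes goes through, and the remaining branching-to-chain case is vacuous because a one-element multiset cannot be mapped surjectively onto a longer one.
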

\begin{proof}
    Let $\X,\X'\in \T_{n+1}$ be such that
    \[
    \pi(\X) = \big(\tau(m,k),[\Y_0, \dots, \Y_k]\big) \hspace{.3cm} \text{ and } \hspace{.3cm} \pi(\X')=\big(\tau(m',k'),[\Y'_0, \dots, \Y'_{k'}]\big).
    \]
    We need to prove that
    \[
    \pi(\X) \leq_p \times << \pi(\X')\hspace{.3cm} \text{ implies } \hspace{.3cm}\X \leq_p \X',
    \]
    i.e., that
    \[
    \tau(m,k) \leq_p \tau(m',k') \hspace{.3cm} \text{ and } \hspace{.3cm}[\Y_0, \dots, \Y_k] << [\Y'_0, \dots, \Y'_{k'}] \hspace{.3cm}\text{ imply }\hspace{.3cm}\X \leq_p \X'.
    \]
    To this end, we assume the antecedent of the previous display.
    From $\tau(m,k) \leq_p \tau(m',k')$ we know there exists a surjective bi-p-morphism $g\colon \tau(m',k') \twoheadrightarrow \tau(m,k)$, while $[\Y_0, \dots, \Y_k] << [\Y'_0, \dots, \Y'_{k'}]$ ensures the existence of a surjective map $h \colon [\Y'_0, \dots, \Y'_{k'}] \twoheadrightarrow [\Y_0, \dots, \Y_k]$ satisfying $h(\Y'_i) \leq_p \Y'_i$ for all $i\leq k'$ (see Definition \ref{def multi proj}).
    Hence, for every $i\leq k'$ there is a surjective bi-p-morphism $h_i \colon \Y'_i \twoheadrightarrow h(\Y'_i)$.
    We define $f\colon \X' \to \X$ by
    \[
    f(x)\coloneqq \begin{cases}
        g(x) & \text{ if } x\in \tau(m',k'),\\
        h_i(x) & \text{ if } x\in \Y'_i.
    \end{cases}
    \]
    A straightforward verification shows that $f$ is a surjective bi-p-morphism, hence $\X \leq_p \X'$ follows, as desired.
\end{proof}

\begin{Remark}
    While the map $\pi \colon (\T_{n+1}, \leq_p) \to (\T_2, \leq_p) \times (\T_n^\#, <<)$ is both injective and order reflecting, it is an order embedding only when $n=1$ (in which case it is actually an order isomorphism).
    For example, let $n >1$ and consider the co-trees depicted in Figure \ref{Fig:counter ex}. 
    It is easy to see that $\X \leq_p \X'$, but it is not the case that
    \[ 
    \big(\tau(1,1),[\tau(0,0),\tau(0,0)]\big) \leq_p \times << \big(\tau(0,1),[\tau(0,1),\tau(0,1)]\big),
    \]
    because $\tau(1,1) \nleq_p \tau(0,1)$.
    We can, however, always obtain an order embedding by considering the restricted map
    \[
    \pi\restriction \colon (\T_{n+1}\smallsetminus \T_n, \leq_p) \to (\T_2, \leq_p) \times (\T_n^\#, <<).
    \]
    Since we will not need this fact, we omit its proof as it is fairly lengthy and technical.
\end{Remark}

\begin{figure}[h]
\centering
\begin{tabular}{cc}
\begin{tikzpicture}
    \tikzstyle{point} = [shape=circle, thick, draw=black, fill=black , scale=0.35]

    \node [] (0) at (0,2) [point] {};
    \node [] (1) at (0,1) [point] {};
    \node [] (0') at (-.5,0) [point] {};
    \node [] (1') at (.5,0) [point] {};
    \node [label=below:{$\X\coloneqq \big(\tau(1,1),[\tau(0,0),\tau(0,0)]\big)$}] () at (0,0) [] {};

    \draw (0')--(1)--(1');
    \draw (1)--(0);
\end{tikzpicture}
\hspace{3cm}
\begin{tikzpicture}
    \tikzstyle{point} = [shape=circle, thick, draw=black, fill=black , scale=0.35]
    \node [] (a) at (0,1.5) [point] {};
    \node [] (b) at (-.5,.75) [point] {};
    \node [] (c) at (.5,.75) [point] {};
    \node [] (d) at (-.75,0) [point] {};
    \node [] (e) at (-.25,0) [point] {};
    \node [] (f) at (.25,0) [point] {};
    \node [] (g) at (.75,0) [point] {};
    \node [label=below:{$\X'\coloneqq \big(\tau(0,1),[\tau(0,1),\tau(0,1)]\big)$}] () at (0,0) [] {};
    
    \draw (d)--(b)--(e);
    \draw (f)--(c)--(g);
    \draw (b)--(a)--(c);

\end{tikzpicture}

\end{tabular}
\caption{The co-trees $\X,\X' \in \T_{n+1}$.}
\label{Fig:counter ex}
\end{figure}

\begin{Proposition} \label{prop tn is bpo}
    For every positive integer $n$, the poset $(\T_n, \leq_p)$ is a BPO.
\end{Proposition}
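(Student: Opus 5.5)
The plan is to argue by induction on $n \in \mathbb{Z}^+$, following the strategy set up just before Lemma \ref{lem ord reflecting}.

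\textbf{Base case.} For $n=1$ we have $\T_1=\{\bullet\}$. The order $\leq_p$ on this one-element set is trivially a well-order, so $(\T_1,\leq_p)$ is a BPO by Theorem \ref{thm bpo generation}.(i). We could equally start at $n=2$ via Proposition \ref{prop tau is iso}; this case will also be needed in the inductive step.

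\textbf{Inductive step.} Assume $(\T_n,\leq_p)$ is a BPO. The argument then runs in four short steps.
\benroman
    \item Theorem \ref{thm multi proj is bpo} gives that $(\T_n^\#,<<)$ is a BPO.
    \item Proposition \ref{prop tau is iso} gives that $(\T_2,\leq_p)$ is a BPO.
    \item Theorem \ref{thm bpo generation}.(iii) then makes the product $(\T_2,\leq_p)\times(\T_n^\#,<<)$ a BPO.
    \item Lemma \ref{lem ord reflecting} provides the order reflecting map $\pi\colon(\T_{n+1},\leq_p)\to(\T_2,\leq_p)\times(\T_n^\#,<<)$. Theorem \ref{thm bpo generation}.(ii) therefore yields that $(\T_{n+1},\leq_p)$ is a BPO.
\eroman
This closes the induction.

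\textbf{Well-definedness of $\pi$.} I will check that $\pi$ is defined on all of $\T_{n+1}$. By Lemma \ref{lem structure}, every $\X\in\T_{n+1}$, including chains (taking $k=0$ and $\down y_0=\{y_0\}\in\T_1\subseteq\T_n$), decomposes as $\tau(m,k)$ together with a multiset of members of $\T_n$. So $\pi$ lands in $\T_2\times\T_n^\#$ as required.

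\textbf{Main obstacle.} The only nontrivial ingredients, namely the transfer of BPO-ness to $(\T_n^\#,<<)$ and the order reflection of $\pi$, are already established as Theorem \ref{thm multi proj is bpo} and Lemma \ref{lem ord reflecting}. The one point needing care is in Lemma \ref{lem ord reflecting}: the glued map must really be a bi-p-morphism at the junction between $x_{m'}'$ and the $y_i'$. There $g$ sends each $y_i'$ to a minimal point of $\tau(m,k)$, and this has to agree with $h_i(y_i')$, the co-root of $h(\Y_i')$. Since Lemma \ref{lem ord reflecting} is stated earlier, I would simply invoke it.
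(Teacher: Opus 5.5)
Your proposal is correct and follows the paper's proof essentially verbatim: the same induction via Theorem~\ref{thm multi proj is bpo}, Proposition~\ref{prop tau is iso}, Theorem~\ref{thm bpo generation}.(ii)\&(iii) and Lemma~\ref{lem ord reflecting}. The only cosmetic difference is that you run the inductive step from $n=1$ rather than taking $n=1,2$ as base cases, which is harmless because Proposition~\ref{prop tau is iso} independently gives that $(\T_2,\leq_p)$ is a BPO. The junction issue you flag is real in the paper's definition of $f$, but it is easily repaired and so invoking the lemma is legitimate: let $f$ agree with $h_i$ on all of $\down y_i'$ and with $g$ only on $\{x_0',\dots,x_{m'}'\}$. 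For $k\geq 1$ one checks that $g(x'_{m'})=x_m$, and the chain case $k=0$ is immediate.
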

\begin{proof}
    We use induction on $n$, noting that the base case is trivial because $\T_1=\{\bullet\}$, while Proposition \ref{prop tau is iso} already proves that $(\T_2, \leq_p)$ is a BPO.

    Suppose that $(\T_n, \leq_p)$ is a BPO for some $n\geq 2$.
    By Theorem \ref{thm multi proj is bpo}, we know that the set $\T_n^\#$ of finite multisets of $\T_n$ equipped with the multiset projectivity relation $<<$ must also be a BPO.
    And since $(\T_2, \leq_p)$ is a BPO by above, Theorem \ref{thm bpo generation}.(iii) ensures that the product $(\T_2, \leq_p) \times (\T_n^\#, <<)$ is a BPO as well.
    Furthermore, Lemma \ref{lem ord reflecting} establishes the existence of an order reflecting map from the poset $(\T_{n+1}, \leq_p)$ to the BPO $(\T_2, \leq_p) \times (\T_n^\#, <<)$.
    Thus, Theorem \ref{thm bpo generation}.(ii) entails that $(\T_{n+1}, \leq_p)$ is a BPO.    
\end{proof}

\noindent\textit{Proof of Theorem \ref{main thm}.}
We want to show that there are only $\aleph_0$ locally finite varieties of bi-Gödel algebras, all of which are finitely axiomatizable.

By Theorem \ref{thm lf}, a variety $\V \subseteq \bg$ is locally finite iff $\V \subseteq \V_n= \{\A \in \bg \colon \A \models \beta(\C_n)\}$ for some $n \in \omega$.
It follows that the number of locally finite varieties of bi-Gödel algebras is given by
\[
\sum_{n \in \mathbb{Z}^+} |\Lambda(\V_n)|,
\]
where $|\Lambda(\V_n)|$ denotes the cardinality of the lattice of nontrivial subvarieties of $\V_n$.

Take an arbitrary positive integer $n$ and recall that Lemma \ref{lem antichains equivalence} ensures that $\big((\V_n)_{SI}^{< \omega}, \HHH \SSS\big)$ has no infinite antichains iff the poset $(\T_n, \leq_p)$ has no infinite antichains.
We know from Proposition \ref{prop tn is bpo} that $(\T_n, \leq_p)$ is a BPO, i.e., it has no bad super sequences, and in particular, no infinite antichains.
Hence, neither does $\big((\V_n)_{SI}^{< \omega}, \HHH \SSS\big)$ by the previous equivalence.
Consequently, Theorem \ref{specht} entails that  $|\Lambda(\V_n)| \leq \aleph_0$.
Thus, we can conclude that
\[
\aleph_0 \leq \sum_{n \in \mathbb{Z}^+} |\Lambda(\V_n)| \leq \sum_{n \in \mathbb{Z}^+} \aleph_0 = \aleph_0.
\]
Furthermore, the same theorem also ensures that every $\V_n$ is a Specht variety (i.e., all of its subvarieties are finitely axiomatizable), so our criterion for local finiteness proves the last part of the statement. \qed

\begin{Theorem}\label{thm depiction}
    The depiction of $\Lambda(\bg)$ in Figure \ref{Fig:the lattice} is accurate.
\end{Theorem}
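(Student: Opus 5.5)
We would prove Theorem \ref{thm depiction} by translating every claim about Figure \ref{Fig:the lattice} into a claim about downsets of finite co-trees under $\leq_p$. The translation works as follows. Since $\bg$ is congruence distributive (Theorem \ref{props of bg}.(ii)), Jónsson's Lemma shows that the SI members of $\mathbb{V}(\mathsf{K})$, for a finite set $\mathsf{K}$ of finite algebras, lie in $\HHH\SSS(\mathsf{K})$. Semisimplicity and closure of SIs under subalgebras (Theorem \ref{props of bg}.(i),(iii)) then reduce these SI members to subalgebras of members of $\mathsf{K}$. By Theorems \ref{fin duality} and \ref{si balgs}, for a finite co-tree $\X$ the finite SI members of $\mathbb{V}(\X)$ are exactly the duals of the co-trees $\Y\leq_p\X$. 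A locally finite variety is generated by its finite SI members, so every locally finite $\V$ is determined by the $\leq_p$-downset $\{\A_*\colon \A\in\V^{<\omega}_{SI}\}$ of $\T_n$ for a suitable $n$. Joins and inclusions of such varieties correspond to unions and inclusions of these downsets. All of the local structure of the picture is then a computation of downsets of $(\T_n,\leq_p)$.

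\textbf{Step 1: the bottom and the chain $\down\V_{\bilc}$.} A bi-p-morphic image of a finite chain is a chain by Lemma \ref{lem bi-p-morphs}.(i), and $\La_j\leq_p\La_i$ iff $j\leq i$. Hence the downsets of finite chains are $\{\La_1,\dots,\La_i\}$ or all chains. The variety $\V_{\bilc}$ is generated by the finite chains, which holds because $\vdash_{\bilc}$ is the logic of chains \cite{Paper1} and it is locally finite, being contained in $\V_2$ since $\C_2$ is not a chain. This gives $\down\V_{\bilc}=\{\mathbb{V}(\La_1)\prec\mathbb{V}(\La_2)\prec\cdots\prec\V_{\bilc}\}$. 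The identities $\mathbb{V}(\C_0')=\bacat=\mathbb{V}(\La_1)$ and $\mathbb{V}(\C_1)=\mathbb{V}(\La_2)$ hold literally, since $\C_0'$ is the singleton and $\C_1$ is the $2$-chain.

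\textbf{Step 2: the comb chain $\down\V_{FC}$.} This is the step we expect to be the main obstacle. We would compute all bi-p-morphic images of $\C_n$ and $\C_n'$ and aim to show that they are exactly
\[
\C_0' \leq_p \C_1 \leq_p \C_1' \leq_p \C_2 \leq_p \C_2' \leq_p \cdots
\]
with this ordering linear. The tools are Lemma \ref{lem bi-p-morphs}.(ii),(iii): the co-root goes to the co-root, minimal points go to minimal points, and covers are either collapsed or preserved. A bottom tooth can be folded into the spine, turning $\C_n$ into $\C_{n-1}'$. The spine point $c_1$ together with its tooth can be collapsed, turning $\C_n'$ into $\C_n$. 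The delicate part is excluding any other image, for example one that merges two teeth at different heights; the Down condition should forbid this.

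Given this computation, suppose $\V\subsetneq\V_{FC}$. By Theorem \ref{big crit}.(vi), $\V$ is locally finite, so it is determined by a downset of the linear order above. That downset is proper, since $\V_{FC}$ is generated by the combs and $\V\neq\V_{FC}$. A proper downset of this linear order is principal, so $\V$ is one of the varieties $\mathbb{V}(\C_n)$ or $\mathbb{V}(\C_n')$. Together with Step 1, this yields $\mathbb{V}(\La_2)=\mathbb{V}(\C_1)\prec\mathbb{V}(\C_1')$.

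\textbf{Step 3: the partition and the cardinality labels.} The decomposition $\Lambda(\bg)=\up\V_{FC}\uplus\bigcup_n\down\V_n$ is Theorem \ref{big crit}, (v)$\Leftrightarrow$(vi) together with (viii). The count of $\up\V_{FC}$ is $2^{\aleph_0}$, because its complement is countable by Theorem \ref{main thm} while $|\Lambda(\bg)|=2^{\aleph_0}$ by Theorem \ref{props of bg}.(iv). That $\bg$ is the only variety above every $\V_n$ follows from the FMP of $\bg$: every finite co-tree lies in some $\T_n$.

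\textbf{Step 4: the positions of $\V_n$ and $\V_n+\mathbb{V}(\C_n)$.} Since $\C_n\not\hookrightarrow\C_{n-1}'$, we get $\mathbb{V}(\C_{n-1}')\subseteq\V_n$ while $\mathbb{V}(\C_n)\not\subseteq\V_n$. To show that $\V_n\prec\V_n+\mathbb{V}(\C_n)$, we note that the finite SI members of $\V_n+\mathbb{V}(\C_n)$ are those of $\V_n$ together with $\C_n$. A downset strictly between the two would have to omit $\C_n$, so the corresponding variety would lie in $\V_n$; hence there is nothing strictly between them. The regions labelled $\emptyset$ come out of the same argument, since any variety containing $\mathbb{V}(\C_n)$ but not $\V_n$ must be sandwiched in the described manner. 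Finally, $\V_n\subseteq\V_{n+1}$ and $\V_n+\mathbb{V}(\C_n)\subseteq\V_{n+1}$ follow from Lemma \ref{lem antichains equivalence}.(iii), which places the whole picture inside the countable region below $\V_{FC}$.
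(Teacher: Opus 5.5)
\textbf{Genuine gap 1: the finite SI members of $\V_{FC}$.} Your translation of locally finite varieties into $\leq_p$-downsets of finite SI duals is sound. Your argument for $\V_n\prec\V_n+\mathbb{V}(\C_n)$ and for the $\emptyset$ regions is a valid alternative to the paper's use of the splitting $\Lambda(\bg)=\up\mathbb{V}(\C_n)\uplus\down\V_n$. The problem is in Step 2. From ``$\V\subsetneq\V_{FC}$ is locally finite'' you conclude that $\V$ is determined by a downset of the comb/hcomb chain. This presupposes that every finite SI member of $\V_{FC}$ is the dual of some $\C_n$ or $\C'_n$, which you never justify. Computing the bi-p-morphic images of the combs only describes $\SSS\{\C_n^*\colon n\in\ZZ^+\}$. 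Since $\V_{FC}$ is generated by infinitely many algebras and is not locally finite, J\'onsson's Lemma only places its SI members in $\HHH\SSS\PPP_U\{\C_n^*\colon n\in\ZZ^+\}$, where nonprincipal ultraproducts are infinite. So neither form of your translation (finite generating set, or locally finite ambient variety) applies. You need a further tool, e.g.\ the splitting \cite[Thm.~4.12]{Paper1}. If $\X$ is a finite co-tree with $\X^*\in\V_{FC}$, then $\V_{FC}\not\models\J(\X)$, so some generator has $\C_m^*\not\models\J(\X)$. Hence $\mathbb{V}(\X)\subseteq\mathbb{V}(\C_m)$, and by your J\'onsson argument $\X\leq_p\C_m$. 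The paper obtains exactly this, together with the chain of bi-p-morphic images that you only sketch, from \cite[Prop.~3.10]{MARTINS2025103563}. In that sketch, note that $\C'_n\twoheadrightarrow\C_n$ identifies the handle $y_0$ with the lowest tooth $y'_1$. Collapsing $y_1$ with $y'_1$ instead forces $y_0$ into the same class and yields $\C'_{n-1}$.

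\textbf{Genuine gap 2: the $\aleph_0$ labels.} Your Step 3 justifies only the $2^{\aleph_0}$ label. The $\aleph_0$ labels require $|\down\V_2|=\aleph_0$ and $|\down\V_n\smallsetminus\down\V_{n-1}|=\aleph_0$ for every $n\geq 3$. Theorem \ref{main thm} gives only the upper bound. The only members of $\down\V_n\smallsetminus\down\V_{n-1}$ your steps identify are $\mathbb{V}(\C_{n-1})$, $\mathbb{V}(\C'_{n-1})$, $\V_{n-1}+\mathbb{V}(\C_{n-1})$ and $\V_n$, which are finitely many. You must exhibit an infinite family, as the paper does:
\begin{itemize}
\item fix $\Y\in\T_{n-1}\smallsetminus\T_{n-2}$;
\item using Lemma \ref{lem structure}, take co-trees $\X_t\in\T_n\smallsetminus\T_{n-1}$, each a top chain whose length grows with $t$, sitting above a branching point with two immediate predecessors that generate copies of $\Y$;
\item then for $t<r$ we have $\X_r\twoheadrightarrow\X_t$ but not conversely, so $\mathbb{V}(\X_0)\subsetneq\mathbb{V}(\X_1)\subsetneq\cdots$ is a strictly increasing chain inside $\down\V_n\smallsetminus\down\V_{n-1}$.
\end{itemize}
For $n=2$ your Step 1 already suffices, since $\down\V_{\bilc}\subseteq\down\V_2$ is infinite, but you should draw that conclusion explicitly.

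Finally, your closing phrase ``the countable region below $\V_{FC}$'' is wrong. For $n\geq 2$ we have $\V_n\not\subseteq\V_{FC}$; for example $\La_3^*\in\V_2\smallsetminus\V_{FC}$. The countable region is the complement of $\up\V_{FC}$, namely $\bigcup_n\down\V_n$.
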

\begin{proof}
    Given a bi-Heyting algebra $\A$ and a finite poset $\X$, recall the notations $\mathbb{V}(\A)\coloneqq \HHH\SSS\PPP \{\A\}$ and $\mathbb{V}(\X) \coloneqq \mathbb{V}(\X^*)$.


    We first show that the equalities we claimed to be true in our depiction of $\Lambda(\bg)$ and in the subsequent comments indeed hold.
    Recall the definitions of the finite combs (possibly with handle, see Figure \ref{Fig:finite-hcombs2}) and of the finite chains $\La_n$, which yield
    \[
    \C'_0= \{\bullet\}= \La_1 \hspace{.3cm} \text{ and } \hspace{.3cm} \C_1=\La_2.
    \]
    Hence $\mathbb{V}(\C_1)=\mathbb{V}(\La_2)$, and it is well known that the dual of $\{\bullet\}$ generates the variety of Boolean algebras, thus
    \[\mathbb{V}(\C'_0)=\mathsf{BA}=\mathbb{V}(\La_1)=\V_1 = \{\A \in \bg\colon \A \models \beta(\C_1)\} = \{\A \in \bg\colon \C_1 \not \hookrightarrow \A_*\}.\]

    The partition $\Lambda(\bg) = \up \V_{FC} \biguplus \bigcup_{n\in \ZZ^+} \down \V_n$ is immediate from the criterion for local finiteness of varieties of bi-Gödel algebras found in Theorem \ref{The big theorem}.(2) together with the definition of 
    \[
    \V_n =\{\A \in \bg\colon \A \models \beta(\C_n)\} = \{\A \in \bg\colon \C_n \not \hookrightarrow \A_*\},
    \]
    since any $\V \subseteq \bg$ is either contained in some $\V_n$ or contains $\V_{FC}$.
    Furthermore, we know from Theorem \ref{main thm} that $|\bigcup_{n \in \ZZ^+} \V_n|= \aleph_0$, hence it follows from the fact that $|\Lambda(\bg)|=2\qq {\aleph_0}$ (see \cite[Thm. 4.16]{Paper1}) that $|\up \vfc| =2\qq {\aleph_0}$.

    Next we show that the outer edges of our lattice are accurately depicted.
    As $\bg$ is congruence-distributive by Theorem \ref{props of bg}.(ii), it is a consequence of Jónsson’s Lemma (see, e.g., \cite[Thm.\ VI.6.10]{Sanka2}) that $\mathbb{V}(K)_{SI} \subseteq \HHH \SSS(K)$ for any finite $K \subseteq \bg^{<\omega}$.
    If we moreover assume that $K\subseteq \bg_{SI}^{<\omega}$, then by recalling that $\bg$ is a semi-simple variety in which every subalgebra of an SI element is also SI (see Theorem \ref{props of bg}.(i)\&.(iii)) we obtain $\mathbb{V}(K)_{SI}= \III \SSS(K)$.
    Using duality, an easy argument now ensures that the subvarieties of $\mathbb{V}(K)$ are in a one-to-one correspondence with the $\leq_p$-antichains of
    \[
    \{\Y^* \colon \Y^* \text{ is a finite co-tree s.t. } \X \twoheadrightarrow \Y \text{ for some } \X^* \in K\},
    \]
    a fact that we will use to prove
    \[
    \down \vfc =\{\mathbb{V}(\C'_0) \prec \mathbb{V}(\C_1) \prec \mathbb{V}(\C'_1) \prec  \mathbb{V}(\C_2) \prec \dots \prec \V_{FC}\}.
    \]
    It follows from \cite[Prop.~3.10]{MARTINS2025103563} that $(\vfc)_{SI}^{<\omega} = \{\C_n^* \colon n \in \ZZ^+\} \cup \{(\C'_n)^{*} \colon n \in \omega\}$.
    Since by definition the variety of the finite combs $\vfc$ has the FMP, this variety is generated by $(\vfc)_{SI}^{<\omega}$, hence any proper subvariety of $\vfc$ is generated by a proper subset of $\{\C_n^* \colon n \in \ZZ^+\} \cup \{(\C'_n)^{*} \colon n \in \omega\}$.
    The proof of the aforementioned result also establishes the existence of a strict chain of surjective bi-p-morphisms
\[
\dots\twoheadrightarrow \C'_n \twoheadrightarrow \C_n \twoheadrightarrow \C'_{n-1} \twoheadrightarrow \C_{n-1} \twoheadrightarrow \dots \twoheadrightarrow \C_1 \twoheadrightarrow \C'_0,
\]
which moreover contains every bi-p-morphic image of each element of the chain.
Using this together with the above characterization of $(\vfc)_{SI}^{<\omega}$, it is easy to see that any variety containing infinitely many duals of elements of the chain must be the whole $\vfc$.
Consequently, the proper subvarieties of $\vfc$ must be generated by finitely many duals of elements of the chain.
By our comments above regarding $\mathbb{V}(K)$ for a finite $K\subseteq \bg_{SI}^{<\omega}$, the properties of the chain yield
\[
    \down \vfc =\{\mathbb{V}(\C'_0) \prec \mathbb{V}(\C_1) \prec \mathbb{V}(\C'_1) \prec  \mathbb{V}(\C_2) \prec \dots \prec \V_{FC}\},
    \]
and thus that the left outer edge of $\Lambda(\bg)$ is faithfully depicted.

    Let us now characterize $\down \V_{\bilc}$, the sublattice of nontrivial varieties which validate the bi-intuitionistic linear calculus (i.e., the bi-intuitionistic logic of chains)
    \[
    \vdash_{\operatorname{\mathsf{bi-LC}}} = \cbipc + (p \to q) \lor (q \to p) + \neg[ (q\gets p) \land (p \gets q)].
    \]
    It is shown in \cite[Thm.~3.10]{Paper1} that $\A \in \V_{\bilc}$ iff the underlying poset of $\A_*$ is a disjoint union chains. 
    This has two immediate consequences. 
    Firstly, we have $(\V_{\bilc})_{SI}^{<\omega}=\{\La_n^* \colon n \in \ZZ^+\}$, where $\La_n$ denotes the $n$-chain (i.e., a linearly ordered poset with $n$ elements).
    
    Secondly, since the only finite comb that can be order embedded into chains is $\C_1 = \La_2$, then $\V_{\bilc} \subseteq \V_2$ holds true (as depicted in Figure \ref{Fig:the lattice}), which by Theorem \ref{The big theorem}.(2)  implies that $\V_{\bilc}$ is locally finite, and thus has the FMP.
    Because of this, and since we clearly have a strict chain of surjective bi-p-morphisms
    \[
    \dots \twoheadrightarrow \La_{n+1}\qq * \twoheadrightarrow \La_{n}\qq * \twoheadrightarrow \La_{n-1}\qq * \twoheadrightarrow \dots \twoheadrightarrow \La_1
    \]
    which moreover contains every bi-p-morphic image of each element of the chain, we can now use a very similar argument to the one used above to infer 
    \[
    \down \V_{\bilc} = \{ \mathbb{V}(\La_1) \prec \mathbb{V}(\La_2) \prec \dots \prec \V_{\bilc} \}.
    \]

Now, recall that by \cite[Cor.~4.5]{Paper1}, the variety $\bg$ has the FMP, hence it is generated by $\{\A \in \bg \colon \A_* \text{ is a finite co-tree}\}$.
Since there are arbitrarily large finite combs, any element of this set is contained in some 
\[
\V_n =\{\A \in \bg\colon \A \models \beta(\C_n)\} = \{\A \in \bg\colon \C_n \not \hookrightarrow \A_*\}.
\]
This makes clear that the only variety of bi-Gödel algebras that contains $\bigcup_{n \in \ZZ^+} \V_n$ is $\bg$ itself.
Furthermore, as $\C_n$ is not contained in $\V_n$, the join $\V_n + \mathbb{V}(\C_n)$ is an immediate successor of $\V_n$.
And we already established in Lemma \ref{lem antichains equivalence} that for $n < m$, we have $\V_n \subsetneq \V_m$.
This concludes the proof that our depiction of the right outer edge of $\Lambda(\bg)$ is accurate.

It remains to show that we correctly partitioned $\bigcup_{n \in \ZZ^+} \V_n$ into $\aleph_0$ pieces, each of size $\aleph_0$.
This will follow from a series of facts.

Firstly, it is proved in \cite[Thm.~4.12]{Paper1} that any finite co-tree, such as $\C_n$, defines a lattice splitting 
\[
\Lambda(\bg) = \up \mathbb{V}(\C_n) \biguplus \down \{ \A \in \bg \colon \A \models \J(\C_n) \},
\]
where $\J(\C_n)$ denotes the Jankov formula of the algebraic dual of $\C_n$.

Secondly, \cite[Cor.~5.22]{Paper1} ensures that for a bi-Gödel algebra $\A$, we have $\A \models \J(\C_n)$ iff $\A \models \beta(\C_n)$.
We thus have
\[
\V_n =\{\A \in \bg\colon \A \models \beta(\C_n)\} =\{\A \in \bg\colon \A \models \J(\C_n)\},
\]
so the lattice splitting above is of the form
\[
\Lambda(\bg) = \up \mathbb{V}(\C_n) \biguplus \down \V_n.
\]
And thirdly, a quick inspection of their structure makes clear that $\C_n \not \hookrightarrow \C'_{n-1}$, hence 
\[
\C'_{n-1} \in \V_n  =\{\A \in \bg\colon \C_n \not \hookrightarrow \A_*\},
\]
and it follows that $\mathbb{V}(\C'_{n-1})\subseteq \V_n$.

Finally, we show $|\down \V_2| = \aleph_0 = | \down \V_n \smallsetminus \down \V_{n-1}|$.
The first equality is immediate from $|\down \V_{\bilc}|=\aleph_0$ and $\V_{\bilc} \subseteq \V_2$.

We now take $n>2$, and prove $|\down \V_n \smallsetminus \down \V_{n-1}| = \aleph_0$ by constructing an infinite strict chain in $\down \V_n \smallsetminus \down \V_{n-1}$, which implies $\aleph_0 \leq | \down \V_n \smallsetminus \down \V_{n-1}| \leq |\V_n|$, and so the desired equality follows from the fact $|\down \V_n|=|\Lambda(\V_n)| \leq \aleph_0$ proved in Theorem \ref{main thm}.

Recall Lemmas \ref{lem structure}\&\ref{lem ord reflecting} and the adjacent discussion.
Let $\Y\in \T_{n-1}\smallsetminus \T_{n-2}$, which exists by Lemma \ref{lem antichains equivalence}.(iii), and for each $t\in \omega$, set
\[
\X_t \coloneqq (\tau(2+t,2),[\Y,\Y]).
\]
By definition we must have $\X_t \in \T_n \smallsetminus \T_{n-1}$, and thus that $\mathbb{V}(\X_t) \in \down \V_n \smallsetminus \down \V_{n-1}$.
Furthermore, it is easy to see that $\X_t \not \twoheadrightarrow \X_r$ and $\X_r \twoheadrightarrow \X_t$ whenever $t < r$.
Clearly, this yields an infinite ascending chain
\[
\mathbb{V}(\X_0) \subsetneq \mathbb{V}(\X_1) \subsetneq \dots,
\]
contained in $\down \V_n \smallsetminus \down \V_{n-1}$, as desired.\end{proof}

\vs

\noindent 
{\bf Acknowledgment}\ \ I was supported by the grant 2023.03419.BD from the Portuguese Foundation for Science and Technology (FCT) and by the proyecto PID$2022$-$141529$NB-C$21$ de investigaci\'on financiado por MICIU/AEI/ 10.13039/501100 011033 y por FEDER, UE. 
I am very thankful to G. Solda and A. Weiermann for introducing me to the theory of better partial orders in an elucidating conversation that made proving Theorem \ref{thm multi proj is bpo} possible.

\bibliographystyle{plain}
\bibliography{biblio}
\end{document}